\documentclass[reqno,twoside,11pt]{amsart}
\usepackage{amsmath, amsfonts, amssymb, esint, amsthm, nicefrac, bbm,
  dsfont, comment}
\usepackage{epsfig, graphicx,xcolor}
\newcommand{\dx}{~\mathrm{d}x}
\newcommand{\sym}{\mathrm{sym}}

\newcommand{\Id}{\mathrm{Id}}

\newcommand{\R}{\mathbb{R}}
\def\endproof{\hspace*{\fill}\mbox{\ \rule{.1in}{.1in}}\medskip }
\newcommand*{\dbar}[1]{\bar{\bar{#1}}}
\newcommand*{\tbar}[1]{\bar{\dbar{#1}}}

\numberwithin{equation}{section}
\theoremstyle{plain}

\newtheorem*{theorem*}{Theorem}
\newtheorem{theorem}{Theorem}[section]
\newtheorem{lemma}[theorem]{Lemma}
\newtheorem{corollary}[theorem]{Corollary}

\theoremstyle{definition}

\begin{document}
\title [Isometric immersion system in arbitrary dimension and codimension]
{Flexibility of the isometric immersion system in arbitrary dimension
  and codimension and the energy scaling of prestrained
thin films}
\author{Marta Lewicka and Hui Li}
\address{M.L.: University of Pittsburgh, Department of Mathematics, 
139 University Place, Pittsburgh, PA 15260, USA}
\address{H.L.: Changzhou Vocational Institute of Industry Technology, Basic Teaching Department, 28 Minxin
Middle Rd., Changzhou, Jiangsu, China 213164}
\email{lewicka@pitt.edu, lihui@ciit.edu.cn} 

\date{\today}
\thanks{AMS classification: 35J96, 53C42, 53A35\\
M.L. was partially supported by NSF grant DMS-2006439.
}

\begin{abstract} 
We prove that, for a given $\mathcal{C}^{r,\beta}$-regular Riemann metric posed on a
$d$-dimensional domain, every short immersion into the
Euclidean space $\mathbb{R}^{d+k}$, can be uniformly approximated by exact
isometric immersions of regularity $\mathcal{C}^{1,\alpha}$ for any
$\alpha<\alpha_0=\min\{\frac{r+\beta}{2}, \frac{1}{1+d(d+1)/k}\}$.
% where $d_*=\frac{d(d+1)}{2}$ is the Janet dimension.   
Our theorem recovers several previously known results as special
cases. The novelty thereof lies in providing a unified flexibility statement for
arbitrary dimensions $d$ and codimensions $k$, while 
also treating the so far uncharted range $k\in (1,
\frac{d(d+1)}{2}-d+1)\setminus \{d\}$, where no corresponding general
result was previously available. Our threshold flexibility
exponent $\alpha_0$ agrees with that
obtained in \cite{lew_conv} for the closely related Monge-Amp\`ere system.
As an application, we prove a new estimate in the quantitative
immersability of thin prestrained films, setting the scaling
exponent of the infimum of non-Euclidean energies in presence of an
arbitrary prestrain metric, and in the limit of the film's vanishing thickness,
at $\frac{4\alpha_0}{\alpha_0+1}$. 
\end{abstract} 

\maketitle
%\tableofcontents

\section{Introduction}\label{sec_intro}

In this paper, we prove a new result in the theory of {\em convex
  integration} for the system of {\em isometric immersions} and, as an
application, derive a new estimate in
the {\em quantitative immersability} of thin {\em non-Euclidean films}. 
The central departing
question is whether, for a given Riemannian metric $g$ posed on a
domain $\omega\subset\R^d$, every short immersion into $\mathbb{R}^{d+k}$, where 
$k\geq 1$ denotes the codimension, can be approximated by exact
{isometric immersions} $u$:
\begin{equation}\label{II}
\begin{split}  
& (\nabla u)^T\nabla u =g \quad \mbox{in }\; \omega\subset\R^d. 
\\ & \mbox{for } \; u:\omega\to\R^{d+k}, \quad g:\omega\to\R^{d\times d}_{\sym, >}.
\end{split}
\end{equation}
The seminal works by Nash \cite{Nash1, Nash2} and Kuiper
\cite{Kuiper} established that, already in codimension $k=1$, any short immersion in the
sense that $(\nabla u)^T\nabla u <g$ on the chart, is the uniform limit
of solutions to (\ref{II}) of regularity $\mathcal{C}^1$. The analogous
{\em flexibility result} in the H\"older classes $\mathcal{C}^{1,\alpha}$, for every
$\alpha<\frac{1}{1+d(d+1)}$, was later provided by Conti, De Lellis and Szekelyhidi
\cite{CDS}. Since then, considerable effort has been devoted to
determining the maximal H\"older exponent $\alpha_0$ for which
flexibility persists and it is natural to expect that
$\alpha_0$ should increase with $k$. A number of partial
results support this intuition. In codimension $k=1$,
the best result currently available is due to Inauen
\cite{In2026} and it is set at $\alpha_0=\frac{1}{2d-1}$. At the
other extreme, Lewicka proved in \cite{lew_full2d2k} that for $d=2,
k=2$, flexibility holds for every $\alpha<1$, i.e. up to
$\alpha_0=1$. This full flexibility phenomenon was subsequently
extended in the context of the closely related
Monge-Amp\`ere system in \cite{CHIL}, for the codimension
$k=\frac{d^2-d +2}{2}$. We also mention the
result by K\"allen \cite{Kallen}, showing {\em full flexibility} for (\ref{II}) whenever $k$ is
sufficiently large. For the intermediate
case $k=d$, Cao and Szekelyhidi \cite{CS} established flexibility up to
$\alpha_0=\frac{1}{2+d}$. Despite these advances, no general
result has so far provided a consistent unified description of $\alpha_0$ for all $d$ and $k$.

\begin{table}[ht]
\centering
\label{taula1}
\begin{tabular}{||c|c||c||}
\hline
{codimension $k$} & {exponent $\alpha_0$} &  {reference} \\ \hline
$1$ & $0$ & \cite{Nash1, Nash2, Kuiper} \\ \hline  
$2d_*$        & $1$           & \cite{Kallen}      \\ \hline  
$1$        & ${1}/({1+2d_*})$           & \cite{CDS}      \\ \hline
$1$        & ${1}/(1+d^2-d)$           & \cite{CHI2}  \\ \hline
$1$        & ${1}/({2d-1})$           & \cite{In2026}  \\ \hline  
$d$        & ${1}/({2+d})$           & \cite{CS}      \\ \hline  
$d_*-d+1$        & $1$           & \cite{lew_full2d2k, CHIL}      \\ \hline  
$k\geq 1$        & ${1}/({1+ 2d_*/k})$           & this paper      \\ \hline
\end{tabular}
\vspace{2mm}
\caption{Development of convex integration results for system of
  isometric immersions (\ref{II}) in the general dimension $d\geq 2$
  and the metric $g\in\mathcal{C}^2$.}
\vspace{-4mm}
\end{table}

\noindent This is what we achieve in the present paper. We prove
flexibility of the system (\ref{II}), namely the density of its
$\mathcal{C}^{1,\alpha}$ solutions in the set of
subsolutions (the {\em short immersions}) for all $\alpha<\alpha_0=\frac{1}{1+2d_*/k}$.
Our theorem recovers several previously known results as special
cases. When $k=1$, it yields the exponent obtained in \cite{CDS}, although
it does not reach the improved threshold in \cite{In2026}. When
$k\to\infty$, it is consistent with the full flexibility 
in \cite{Kallen}, while remaining weaker than the sharp findings of
\cite{lew_full2d2k, CHIL}. When
$d=k$, it includes the result of \cite{CS}. The novelty of
our work lies also in treating the so far uncharted range $k\in (1,
\frac{d(d+1)}{2}-d+1)\setminus\{d\}$, where no corresponding general
theorem was previously available. Moreover, our exponent agrees with that
obtained in \cite{lew_conv} for the Monge-Amp\`ere system.
Our main result is the following:

\begin{theorem}\label{th_final}
Let $g\in\mathcal{C}^{r,\beta}(\bar\omega, \R^{d\times d}_{\sym,>})$ be
defined on the closure of an open set $\omega\subset\R^d$,
diffeomorphic to $B_1$, for some $r+\beta>0$. Then, for every
immersion $u\in\mathcal{C}^1(\bar\omega, \R^{d+k})$ satisfying:
$$(\nabla u)^T\nabla u< g \quad \mbox{in }\; \bar\omega,$$
for every $\epsilon>0$ and for every regularity exponent $\alpha$ in:
\begin{equation}\label{range}
0<\alpha < \min\Big\{\frac{r+\beta}{2}, \frac{1}{1+2d_*/k}\Big\} 
\quad \mbox{where} \quad d_*=\frac{d(d+1)}{2},
\end{equation}
there exist $\bar u\in\mathcal{C}^{1,\alpha}(\bar\omega,\R^{d+k})$, such that:
\begin{equation}\label{outcome}
\|\bar u - u\|_0\leq \epsilon \quad \mbox{and} \quad (\nabla
\bar u)^T\nabla \bar u =g \quad \mbox{in }\;\bar\omega. 
\end{equation}
\end{theorem}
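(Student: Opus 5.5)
The plan is to run a Nash--Kuiper iteration in the quantitative form of \cite{CDS}, organized as a ``stage'' that improves the metric defect, with the codimension $k$ exploited by processing $k$ primitive metrics in parallel per step. We first reduce to the case of a $\mathcal{C}^\infty$ immersion $u$ and a strict defect. Mollify and slightly rescale $u$ so that $g-(\nabla u)^T\nabla u \geq c\,\Id$ on $\bar\omega$. Then perform one initial Nash--Kuiper step (with $\mathcal{C}^1$ accuracy only) so that the defect $\mathcal{D}=g-(\nabla u)^T\nabla u$ becomes small in $\mathcal{C}^0$ while staying positive definite. This yields the starting point of the iteration.

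\textbf{The stage.} Given a smooth $u$, a frequency/length scale $\mu$, and a defect with
\[
\|\mathcal{D}\|_0\le\delta,\qquad \|\nabla^2u\|_0\le \delta^{1/2}\mu ,
\]
we mollify $u$ and $g$ at scale $l\ll 1/\mu$. The error in $g$ costs $l^{r+\beta}$, and this is the source of the constraint $\alpha<(r+\beta)/2$. We then decompose the mollified defect $\mathcal{D}_l+\text{(a target margin)}$ as
\[
\sum_{i=1}^{d_*} a_i^2\,\eta_i\otimes\eta_i ,
\]
with fixed unit directions $\eta_i$ and smooth coefficients $a_i$, using positive definiteness and a partition of unity. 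This is the standard lemma from \cite{CDS}; the fact that $\omega$ is diffeomorphic to $B_1$ lets us use finitely many charts and avoid topological issues.

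\textbf{Parallel corrugation.} The key new point is that, because there are $k$ independent normal directions, we add $k$ primitive metrics simultaneously in one step. At each step we choose an orthonormal frame $\nu_1,\dots,\nu_k$ normal to $\nabla u$; its smoothness is controlled by $\nabla^2 u$. We add Nash spirals
\[
\frac{a_i}{\lambda}\big(\Gamma_1(\lambda x\cdot\eta_i)\,\nu_j+\Gamma_2(\lambda x\cdot\eta_i)\,\xi\big),
\]
where the $k$ spirals of a given step use mutually distinct normals $\nu_j$. Their cross terms are then of lower order: they are oscillatory products of distinct normal components, which either vanish or are absorbed into the next step's error. Consequently one stage needs only $N=\lceil d_*/k\rceil$ sequential steps rather than $d_*$. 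Each step multiplies the frequency by a factor $K$, and the errors accumulate as $\delta\,K^{-1}$ or a similar rate. The stage output then satisfies
\[
\delta' \sim \delta\,(\mu/\lambda)^{\text{something}},\qquad \mu'\sim \lambda\sim \mu K^{N},
\]
with $\|u'-u\|_1\lesssim\delta^{1/2}$.

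\textbf{Iteration and the main obstacle.} Iterating with $\delta_q=a^{-b^q}$ and $\mu_q$ chosen to match, the standard interpolation argument $\|u_{q+1}-u_q\|_{1,\alpha}\lesssim \delta_q^{1/2}\mu_{q+1}^\alpha$ gives convergence in $\mathcal{C}^{1,\alpha}$ whenever
\[
\alpha<\frac{1}{1+2N}\approx\frac{1}{1+2d_*/k}.
\]
This holds exactly as in \cite{CDS}, but with $2d_*$ replaced by $2d_*/k$. When $k\nmid d_*$, to obtain the sharp exponent rather than one involving $\lceil d_*/k\rceil$, we would interleave the primitive metrics across consecutive stages, or use a fractional-step bookkeeping in the spirit of \cite{lew_conv}. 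Uniform closeness $\|\bar u-u\|_0\le\epsilon$ follows because each correction is $O(\delta_q^{1/2}/\lambda)$.

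I expect the main obstacle to be the simultaneous corrugation step. The difficulty is controlling the interaction terms between spirals built on different normals. The frame $\nu_j$ depends on $\nabla u$, which is itself being modified within the step, so the normals drift, and the resulting error must remain of order $\delta\,\mu/\lambda$ uniformly in the stage. I would first try to replace the Nash spiral by a $k$-dimensional ``multi-spiral'' built on a single normal frame fixed at the start of each step. Fixing the frame this way makes all spirals in a step mutually orthogonal to leading order. I would then estimate the frame derivative loss with the mollification parameter $l$.
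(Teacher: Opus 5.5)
\textbf{Verdict: genuine gap.} Your construction gives $\alpha<1/(1+2\lceil d_*/k\rceil)$, which falls short of the stated range whenever $k\nmid d_*$.

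Your central idea is the right one: use the $k$ normal directions so that the frequency is raised only once per $k$ primitive metrics. When $k\mid d_*$ your scheme is essentially the paper's stage with $S=1$, $J=d_*/k$. This requires your ``spiral'' to be one normal plus a tangential correction along $T_u\eta$, i.e.\ Kuiper's corrugation; a genuine Nash spiral uses two normals per primitive metric and halves the gain.

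The shortfall for $k\nmid d_*$ is concrete. For $d=k=2$ you get $1/5$ instead of $1/4$. For every $k>d_*$ you have $\lceil d_*/k\rceil=1$, i.e.\ only $\alpha<1/3$, while the claimed threshold $1/(1+2d_*/k)$ tends to $1$ as $k\to\infty$. A single decomposition contains only $d_*$ primitive metrics, so the surplus normals are wasted.

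Your sentence on interleaving does not close this, and its naive form fails. The primitive metrics of the next decomposition are determined by the defect of the immersion obtained after the current decomposition has been fully processed, so they cannot enter the same simultaneous step. You must proceed sequentially and re-decompose the defect of an immersion that already oscillates at the current frequency $\lambda_{sd_*}$. The oscillating part of that defect is of order $\delta_s$, far above the target $\delta_{s+1}$, so it cannot be mollified away. The new amplitudes therefore only obey $\|\nabla^{(m)}a^s_\rho\|_0\lesssim\lambda_{sd_*}^m\delta_s^{1/2}$. Your corrugation has no second-order correction, so its error $\frac{1}{\lambda}a\,\sym(\nabla a\otimes\eta)$ has size $\delta_s\lambda_{sd_*}/\lambda_i$. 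That forces a full factor $\sigma$ in frequency at every re-decomposition, which the decay $\delta_s^{1/2}\sim\sigma^{-s/2}\delta_0^{1/2}$ does not compensate, and the ratio $d_*/k$ is lost.

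The paper supplies the two missing ingredients.

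First, it uses the balanced corrugation of Lemma \ref{lem_step2}. The additional term $\frac{\dbar\Gamma(\lambda t)}{\lambda^2}aT_u\nabla a$ cancels the first-order term, leaving only $\frac{1}{\lambda^2}a\nabla^2a$ and $\frac{1}{\lambda^2}\nabla a\otimes\nabla a$, of size $\delta_s(\lambda_{sd_*}/\lambda_i)^2$. A jump of $\sigma^{1/2}$ at each re-decomposition then suffices, and in $\|\nabla^2u\|_0$ it is offset, up to constants, by the decay of $\delta_s^{1/2}$.

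Second, it runs a sequential stage of $\mathrm{lcm}(d_*,k)$ single-normal steps. Step $i=jk+\theta=sd_*+\rho$ is performed at frequency $\lambda_0\sigma^{1+j+s/2}$ along $E^\theta_{i-1}$. The second fundamental form is tracked componentwise: $\langle\partial^2_{pq}u_i,E^{\bar\theta}_i\rangle$ stays of size $\lambda_0\sigma^j\delta_0^{1/2}$ until the normal $\bar\theta$ is used in round $j$. This holds because a corrugation along $E^\theta$ affects that component only through $\langle E^\theta,E^{\bar\theta}\rangle=0$ and lower-order terms controlled by the frame propagation of Lemma \ref{lem_propa}. It keeps the error $\frac{2\Gamma}{\lambda_i}a\langle\partial^2u_{i-1},E^\theta_{i-1}\rangle$ at size $\delta_s/\sigma$ even though the frequency has not been raised since the previous normal was used, and it yields $J/S=d_*/k$ exactly.

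Your simultaneous step is the special case $k\mid d_*$ of this construction. The general case needs the sequential version with frame updates, and that is where the actual work lies.
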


\medskip

\noindent We point out that the first condition in (\ref{outcome}) can
be replaced by $\|\bar u - u\|_{0,\bar\alpha}\leq \epsilon$ for any
fixed independent $\bar\alpha\in (0,1)$, so that, in fact, a subsolution $u$ to
(\ref{II}) is the limit in $\mathcal{C}^{0,\bar\alpha}(\bar\omega,
\R^{d+k})$ of the exact isometric immersions $\bar u$ of regularity
$\mathcal{C}^{1,\alpha}(\bar\omega, \R^{d+k})$ with the exponent $\alpha$ as
in (\ref{range}).
It is currently an important open problem how to combine the various
convex integration techniques that have been developed by the
authors in recent years, see Table \ref{taula1}. At present, these methods appear to be
tailored to the specific dimension and
codimension scenarios, and it remains unclear how to interpolate
between the best results available at $k=1$ and $k=\frac{d(d+1)}{2}-d+1=d_*-d+1$,
or whether further improvements can be achieved by a suitable
synthesis of these approaches. We refer the reader to
\cite{lew_full2d2k} for a broader overview of the field and a historical
account of the quest for the optimal flexibility exponent $\alpha_0$. 

\bigskip

\noindent As an application, we additionally prove a new estimate in
the {\em quantitative immersability}
of thin {\em non-Euclidean films}. More precisely, given $\omega\subset\R^d$, define the
family of ``thin films'':
\begin{equation}\label{omh}
\Omega^h = \big\{(x,z); ~x\in\omega, ~ z\in B_h\subset\R^k\big\}\subset\R^{d+k},
\end{equation}
parametrized by their ``thickness'' $h\ll 1$.
Consider the Riemannian metric $g$ on $\Omega^1$ and 
pose the problem of minimizing the following energy functionals, as  $h\to 0$:
\begin{equation}\label{Eh}
\mathcal{E}_g^h(u) = \frac{1}{h^k}\int_{\Omega^h} W\big((\nabla
u)g^{-1/2}\big)\det g^{1/2}~\mathrm{d}(x,z)
\qquad\mbox{ for all }\; u\in H^{1}(\Omega^h,\R^{d+k}).
\end{equation}
In general, the energy density $W:\R^{(d+k)\times (d+k)}\to [0,\infty]$ is assumed
to be $\mathcal{C}^2$-regular in the vicinity of $\mathrm{SO}(d+k)$, equal to
$0$ at $\Id_{d+k}$, and frame-invariant in the sense that $W(RF)=
W(F)$ for all $R\in \mathrm{SO}(d+k)$.
When $W\sim \mbox{dist}^2(\cdot, \mathrm{SO}(d+k))$, the energy in (\ref{Eh})
measures the averaged pointwise deficit of a deformation $u$ from
being an orientation preserving isometric immersion of
$g$ on $\Omega^h$. For $d=2$, $k=1$, it models the
{\em elastic energy} of deformations of a prestrained film $\Omega^h$, and various
techniques have been applied to its study \cite{lew_book}, including
questions on asymptotics of the minimizing configurations as
$h\to 0$, in function of the scaling exponent $\theta$ in: $\inf \mathcal{E}_g^h\sim
h^\theta$. Our second main result is the following:

\begin{theorem}\label{th_elasticity}
Let $g\in\mathcal{C}^{r,\beta}(\bar\Omega^1, \R^{(d+k)\times (d+k)}_{\sym,>})$ be
defined on the thin films $\Omega^h$ in (\ref{omh}) where
$\omega\subset\R^d$ is diffeomorphic to $B_1$, and where
$r+\beta>0$. Then, there holds:
$$\inf \mathcal{E}^h_g\leq Ch^{\frac{4\alpha}{\alpha+1}}
\qquad\mbox{for all } \alpha \mbox{ in the range (\ref{range})}, $$
with $C$ depending only on $\omega, g, d, k, \alpha$ but not on $h$.
\end{theorem}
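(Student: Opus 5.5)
We construct a competitor out of a $\mathcal{C}^{1,\alpha}$ isometric immersion of a mollified $d$-dimensional metric, obtained from Theorem \ref{th_final} or rather from the quantitative stage-by-stage estimates behind it, and then balance the errors. Write $g(x,z)$ on $\Omega^1$ and let $g_0(x)=g(x,0)$ restricted to tangential directions, i.e. the $d\times d$ block $g_{tan}(x,0)$. Fix $\alpha$ in (\ref{range}) and a length scale $\ell=h^{\gamma}$, with $\gamma$ to be chosen. We will produce $v\in\mathcal{C}^{1,\alpha}(\bar\omega,\R^{d+k})$ with
\begin{equation*}
\|(\nabla v)^T\nabla v - g_0\|_0\leq C\ell^{2\alpha'} ,\qquad \|\nabla^2 v\|_0\leq C\ell^{\alpha'-1}
\end{equation*}
for an exponent $\alpha'$ that can be taken arbitrarily close to $\alpha$. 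The first bound says that $v$ is an approximate isometric immersion, not an exact one. Such a $v$ comes from running the Nash--Kuiper iteration of the proof of Theorem \ref{th_final} only up to frequency $\ell^{-1}$: start from a short immersion (any, e.g. a scaled embedding), take $g_0$ mollified at scale $\ell$, and stop the iteration once the defect has dropped to $\ell^{2\alpha}$. Equivalently, one interpolates the $\mathcal{C}^{1,\alpha}$ bound of $\bar u$ against mollification: if $\bar u$ is an exact isometric immersion of $g_0$ in $\mathcal{C}^{1,\alpha}$, then $v=\bar u*\phi_\ell$ satisfies $\|\nabla^2 v\|_0\le C\ell^{\alpha-1}$ and, by the commutator estimate $\|(\nabla\bar u)^T\nabla\bar u*\phi_\ell-(\nabla v)^T\nabla v\|_0\le C\ell^{2\alpha}$ together with $\|g_0*\phi_\ell-g_0\|_0\le C\ell^{r+\beta}\le C\ell^{2\alpha}$, the defect is $O(\ell^{2\alpha})$. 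This mollification route is cleaner, and we will use it, applying Theorem \ref{th_final} directly to $g_0$ on $\omega$. The needed regularity holds because $r+\beta>2\alpha$.

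Next we extend $v$ to the film. Let $b(x)\in\R^{(d+k)\times k}$ be a smooth (at scale $\ell$) choice of normal frame completing $\nabla v$, chosen so that $[\nabla v, b]$ approximately solves $[\nabla v,b]^T[\nabla v,b]=g(x,0)$. Since $\nabla v$ has near-full rank $d$, we solve for $b$ pointwise via a Gram--Schmidt/Cholesky-type construction from the mollified data. This gives $\|\nabla b\|_0\le C\ell^{\alpha-1}$ and a defect of $O(\ell^{2\alpha})$. Set
\begin{equation*}
u^h(x,z)=v(x)+b(x)z .
\end{equation*}
Then $\nabla u^h=[\nabla v+\nabla b\,z,\ b]$. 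Using frame invariance and quadratic growth of $W$ near $\mathrm{SO}(d+k)$, namely $W(F)\le C\,\mathrm{dist}^2(F,\mathrm{SO})$ and $\mathrm{dist}(Fg^{-1/2},\mathrm{SO})\le C|F^TF-g|$ when the latter is small, we get pointwise
\begin{equation*}
W\big((\nabla u^h)g^{-1/2}\big)\le C\big(\ell^{2\alpha}+h\ell^{\alpha-1}+h\big)^2 .
\end{equation*}
Here the term $h$ accounts for $g(x,z)-g(x,0)$, which is $O(h^{\min(1,r+\beta)})$; if $r+\beta<1$ this costs $h^{r+\beta}$, and one checks that it is dominated below since $r+\beta>2\alpha$.

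Finally we optimize. Balancing $\ell^{2\alpha}=h\ell^{\alpha-1}$ gives $\ell=h^{1/(\alpha+1)}$, so that the energy is at most $C\ell^{4\alpha}=Ch^{4\alpha/(\alpha+1)}$. The leftover terms are lower order: $h^{2}$ is fine, and $h^{2(r+\beta)}$ is fine since $2(r+\beta)>4\alpha>4\alpha/(\alpha+1)$. Because the range (\ref{range}) is open, the small losses $\alpha'<\alpha$ are absorbed by relabeling $\alpha$. The constants depend on $\|\bar u\|_{1,\alpha}$, which depends only on $\omega,g,d,k,\alpha$.

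The main obstacle is the construction of the normal frame $b$ with the right regularity and defect. It must be built from $\nabla v$ (mollified), and we need to check that the tangential--normal cross terms $(\nabla v)^Tb$ match $g_{tan,nor}(x,0)$ to order $\ell^{2\alpha}$ and not merely $\ell^{\alpha}$. We would define $b$ by solving the linear algebraic conditions exactly for the mollified $\nabla v$, which makes those terms exact up to the $g$-mollification error. This requires $\nabla v$ to stay uniformly an immersion, and that follows from its closeness to the isometry $\nabla\bar u$.
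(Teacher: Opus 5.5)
Your route is the paper's: mollify an exact $\mathcal{C}^{1,\alpha}$ isometric immersion of $g(\cdot,0)_{d\times d}$ at scale $\ell=h^{1/(\alpha+1)}$. Then complete $\nabla v$ by Cosserat vectors $b=[\nabla v\,|\,E]A$ that solve the algebraic Gram conditions exactly against the mollified blocks of $g(\cdot,0)$, take $u^h=v+bz$, and balance $\ell^{2\alpha}$ against $h\ell^{\alpha-1}$.

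There is one step that fails as written. The inequality $\mathrm{dist}(Fg^{-1/2},\mathrm{SO}(d+k))\le C|F^TF-g|$ holds only when $\det F>0$. If $F^TF=g$ but $\det F<0$, then $Fg^{-1/2}\in\mathrm{O}(d+k)\setminus\mathrm{SO}(d+k)$ sits at a fixed positive distance from $\mathrm{SO}(d+k)$. There $W$, which is only assumed $\mathcal{C}^2$ and vanishing near $\mathrm{SO}(d+k)$, need not be small; for physical densities it is typically $+\infty$. Your frame $b$ is never required to be orientation preserving, so nothing rules out $\det\nabla u^h<0$. The paper imposes this explicitly through the second condition in (\ref{zozo}). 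It is secured by the orientation of the normal frame in (\ref{bd_norm2}) and by taking $(A_\epsilon)_{k\times k}$ positive definite, since $\det[\nabla u_\epsilon\,|\,B_\epsilon]=\det[\nabla u_\epsilon\,|\,E^1_\epsilon\ldots E^k_\epsilon]\cdot\det(A_\epsilon)_{k\times k}$. The repair is cheap. The function $\det[\nabla v\,|\,E^1\ldots E^k]$ is continuous and nonvanishing on the connected set $\bar\omega$, so replacing $E^1$ by $-E^1$ if necessary makes it positive everywhere. This changes no Gram matrix. Then $\det\nabla u^h>0$ on all of $\Omega^h$, because $|\nabla u^h(x,z)-\nabla u^h(x,0)|\le Ch\ell^{\alpha-1}\to 0$.

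You also leave the normal frame itself vague. A pointwise Gram--Schmidt needs $k$ fields that, together with the columns of $\nabla v$, are uniformly linearly independent on all of $\bar\omega$. Fixed vectors do not work in general, and global existence uses that $\omega$ is diffeomorphic to $B_1$. You also need $\|\nabla E\|_0\le C\ell^{\alpha-1}$ with $C$ independent of $\ell$. The paper obtains exactly this from Corollary \ref{lem_normals}, whose constants depend on the modulus of continuity of $\nabla v$; that modulus is uniform in $\ell$ because $\|\nabla v\|_{0,\alpha}\le C$. On the positive side, your accounting of $g(x,z)-g(x,0)=O(h^{\min\{1,r+\beta\}})$ is more careful than (\ref{badu3}), which tacitly treats $g$ as Lipschitz in $z$. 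Your observation that $2(r+\beta)>4\alpha/(\alpha+1)$ shows this costs nothing in the final exponent.
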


\medskip

\noindent We point out that for $d=2, k=1$, the asymptotic behavior as $h\to 0$ of the
minimizing sequences to (\ref{Eh}) under the assumption that the
prestrain metric $g$ is smooth, has been fully classified in all viable
scaling regimes corresponding to $h^\theta$ with $\theta\geq 2$ (see \cite{lew_book}).
Since the current state of the art sets the flexibility exponent at
$\alpha_0=\frac{1}{3}$ in virtue of \cite{CHI2}, resulting in $\frac{4\alpha_0}{\alpha_0+1}=1$,
Theorem \ref{th_elasticity} consequently implies the following
uncharted scaling regime in which questions on the asymptotics and the arising types of
singularities should be studied:
$$\inf \mathcal{E}_g^h\sim h^\theta \quad\mbox{ where }\quad 1\leq\theta<2.$$
On the other hand, for $d=2, k=2$, flexibility holds all the way up to
$\alpha_0=1$ in virtue of \cite{lew_full2d2k}, so that 
$\frac{4\alpha_0}{\alpha_0+1}=2$, which yields
$\inf \mathcal{E}_g^h \leq C h^\theta$ for all
$\theta<2$. Consequently, there is no ``gap'' between the
applicability of the membrane energy and the Kirchhoff bending energy,
similarly to the situation of thin rods (enjoying the same codimension $k=2$).

\subsection{The main technical contribution} \label{sub_over}

Our result relies on the {\em stage} construction,
whose iteration via the Nash-Kuiper algorithm in Theorem \ref{thm_NK}
yields Theorem \ref{th_final}.  
To ease the notation, we define the {\em defect}
relative to a matrix field $g$ and a vector field $u$:
$$\mathcal{D}(g, u) \doteq g - (\nabla u)^T\nabla u.$$
 
\begin{theorem}\label{thm_STA} {\textup{[STAGE]}}
Let $g\in \mathcal{C}^{r,\beta}(\bar\omega, \R^{d\times d}_{\sym,>})$  be
defined on the closure of an open set $\omega\subset\R^d$
diffeomorphic to $B_1$, for some regularity exponents:
\begin{equation}\label{regu}
0<r+\beta\leq 2.
\end{equation}
Fix an immersion $\underline u\in\mathcal{C}^\infty(\bar\omega,\R^{d+k})$
and a constant $\underline\gamma\geq 1$ such that:
\begin{equation}\label{imma}
\frac{1}{\underline\gamma}\Id_d\leq (\nabla \underline u)^T\nabla \underline u\leq
\underline\gamma\Id_d\quad\mbox{ in } \;\bar\omega.
\end{equation}
Then, there exists
$\underline \delta\in (0,1)$ and $\underline\sigma>1$, depending only
on $\omega, g, \underline u, \underline\gamma, d,k$, such that the following holds.
Given any $u\in\mathcal{C}^2(\bar\omega,\R^{d+k})$ and any $\delta,
\lambda, \sigma$ such that: 
\begin{align*}
& \delta\leq \underline\delta,\qquad \lambda\delta^{1/2}\geq 1,
\qquad\sigma\geq \underline\sigma,\qquad \sigma^{J+3}\delta^{1/2}\leq 1,
\tag*{(\theequation)$_1$}\refstepcounter{equation} \label{Ass1} \\
& \|\nabla u - \nabla \underline u\|_0 \leq \frac{\rho_{\underline\gamma}}{2},
\tag*{(\theequation)$_2$} \label{Ass2}\\
& \|\mathcal{D}(g-\delta H_0, u)\|_0\leq\frac{r_0}{4}\delta \quad\mbox{
and }\quad \|u\|_2\leq \lambda\delta^{1/2},
\tag*{(\theequation)$_3$} \label{Ass3}
\end{align*}
where $r_0, H_0$ depending only on $d$, and given in Lemma \ref{lem_dec_def}, 
and $\rho_{\underline\gamma}$ depending only on $\underline\gamma, d, k$ is
given in Lemma \ref{lem_propa}, 
there exists $\tilde u\in\mathcal{C}^2(\bar\omega,\R^{d+k})$ satisfying:
\begin{align*}
& \|\tilde u-u\|_1\leq C\delta^{1/2},\qquad \|\tilde u\|_2\leq C\lambda\sigma^J\delta^{1/2},
\tag*{(\theequation)$_1$}\refstepcounter{equation}\label{Res1}\\
&\Big\|\mathcal{D}\Big(g-\frac{\delta}{\sigma^{S}}H_0,\tilde u\Big)\Big\|_0\leq
  \frac{r_0}{5}\frac{\delta}{\sigma^{S}} + \frac{\|g\|_{r,\beta}}{\lambda^{r+\beta}},
\tag*{(\theequation)$_2$} \label{Res2}
\end{align*}
with constants $C$ depending only on $\omega, g, \underline u, \underline\gamma, 
d,k$ and where the integers $J, S$ are given through:
\begin{equation}\label{def_JS}
 Jk=Sd_*=  lcm (d_*, k).
\end{equation}
\end{theorem}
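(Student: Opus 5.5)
Our plan is the standard mollify--decompose--step construction, with the defect handled $k$ primitive directions at a time. The $k$ normal directions are used simultaneously, and the $d_*$ rank-one primitive metrics are cycled through in $J$ steps. Each step cancels $k$ of the $d_*$ terms in the current decomposition and also produces a new error. After $J$ steps, each of the $d_*$ directions has been processed exactly $S = Jk/d_*$ times. This makes the defect gain $\sigma^{-S}$ at the frequency cost $\sigma^J$, which is exactly how (\ref{def_JS}) enters and ultimately yields $\alpha_0 = 1/(1+2d_*/k)$.

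\textbf{Preparation.} First we mollify $u$ and $g$ at scale $\ell = 1/\lambda$. This gives $u_\ell$, $g_\ell$ with
\[
\|u_\ell - u\|_1 \leq C\ell\|u\|_2 \leq C\delta^{1/2},
\qquad \|g_\ell - g\|_0 \leq \|g\|_{r,\beta}\lambda^{-(r+\beta)}.
\]
We also have the commutator estimate $\|(\nabla u_\ell)^T\nabla u_\ell - ((\nabla u)^T\nabla u)_\ell\|_0 \leq C\ell^2\|u\|_2^2 \leq C\delta$, together with the higher bounds $\|u_\ell\|_{m+2} \leq C\lambda^{m+1}\delta^{1/2}$. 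By \ref{Ass2} and Lemma \ref{lem_propa}, $\nabla u_\ell$ stays uniformly nondegenerate, so we can fix $k$ smooth unit normal fields $E_1,\dots,E_k$ to $u_\ell$ with controlled derivatives.

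\textbf{Decomposition.} Next we decompose the new target defect
\[
\mathcal{D}\big(g_\ell - \tfrac{\delta}{\sigma^S}H_0,\; u_\ell\big)
 = \delta\Big(H_0 - \tfrac{1}{\sigma^S}H_0\Big) + O\Big(\tfrac{r_0}{4}\delta\Big) + O(C\ell^2\lambda^2\delta).
\]
The small terms are absorbed by taking $\underline\sigma$ large, $\underline\delta$ small, and the mollification constant appropriately. By Lemma \ref{lem_dec_def}, this defect equals $\delta\sum_{i=1}^{d_*} a_i^2\,\eta_i\otimes\eta_i$, with fixed unit vectors $\eta_i$ and smooth positive coefficients $a_i$ having controlled derivatives.

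\textbf{Iteration of steps.} We then perform $J$ steps at frequencies $\lambda_j = \lambda\sigma^j$, $j = 1,\dots,J$. In step $j$ we add to the current immersion a sum of $k$ Nash-type spirals (Kuiper corrugations). Each spiral lives in the plane spanned by the normal $E_s$ and the corresponding tangent direction, has amplitude $\delta^{1/2}a_{i}/\lambda_j$, and oscillates in direction $\eta_i$. The indices are taken to be the next $k$ indices cyclically, $i \equiv (j-1)k+s \pmod{d_*}$.

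Each step cancels a fraction $1/\sigma^{\cdot}$ of the corresponding primitive terms in the following sense. Every direction is treated $S$ times. At its $m$-th treatment we add only the portion needed so that the residual in that direction decays by a factor $\sigma^{-1}$, in the spirit of Cao--Sz\'ekelyhidi's geometric splitting $a_i^2 = \sum_m \sigma^{-(m-1)}(1-\sigma^{-1})a_i^2$. The error produced in step $j$ is of order
\[
\delta\,\frac{\lambda_{j-1}}{\lambda_j} = \frac{\delta}{\sigma}.
\]
It comes from derivatives of the amplitudes and normals (which oscillate at the previous frequency) and from interaction of the $k$ simultaneous spirals. The latter vanishes to leading order because the normals are mutually orthogonal, via the Kuiper/Nash identity for each single spiral. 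This error is redistributed into the not-yet-finalized coefficients, as in the CS/lew\_conv induction, using that $\delta H_0/\sigma^S$ dominates and that the decomposition of Lemma \ref{lem_dec_def} is stable in a neighbourhood of $H_0$ of radius $r_0$.

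\textbf{Closing the estimates and the main obstacle.} Tracking the inductive hypotheses gives $\|u_j\|_2 \leq C\lambda\sigma^j\delta^{1/2}$ and $\|u_j - u_{j-1}\|_1 \leq C\delta^{1/2}$. This yields \ref{Res1}, and the condition $\sigma^{J+3}\delta^{1/2} \leq 1$ keeps all perturbations small enough for Lemma \ref{lem_propa} to apply to every intermediate immersion. The final defect is $\delta\sigma^{-S}$ times a small multiple of $H_0$ plus the mollification error, giving \ref{Res2} with constant $r_0/5$.

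The main obstacle is this bookkeeping of errors across the $J$ steps. We must show that, after each direction has been visited $S$ times, the accumulated error is at most $\delta\sigma^{-S}r_0/5$, rather than merely $\delta/\sigma$. Each step's error must therefore be absorbed into later steps' coefficients with progressively higher derivative costs. We would handle this by the inductive scheme of \cite{lew_conv}, tracking the $C^m$ norms of the coefficients $a_i^{(j)}$ as $\lambda_j^m$ times powers of $\delta$.
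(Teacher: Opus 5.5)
\textbf{There is a genuine gap: the scheme you describe cannot produce the decay $\sigma^{-S}$ when $S\ge 2$, and it breaks exactly at the step you flag as the main obstacle.}

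In your construction, every primitive mode receives at its first treatment a spiral of amplitude $\sim\delta^{1/2}$. By (\ref{sisi}), such a spiral at frequency $\lambda_j=\lambda\sigma^j$ produces the principal error $-\frac{2\Gamma}{\lambda_j}a\,[\langle\partial^2_{pq}u_{j-1},E\rangle]$. Because all $k$ normals were already corrugated with amplitude $\sim\delta^{1/2}$ in the previous step (or, for $j=1$, because $\|\nabla^2u_\ell\|_0\sim\lambda\delta^{1/2}$), this error has size $\frac{\delta^{1/2}}{\lambda\sigma^j}\cdot\lambda\sigma^{j-1}\delta^{1/2}=\delta/\sigma$. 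You note this yourself. The error is a full symmetric matrix, with nonzero $\eta_\rho\otimes\eta_\rho$ components in every mode. Nothing later can remove it from modes whose last treatment precedes it or is simultaneous with it.

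Concretely, take $d=2$, $k=6$. Then $d_*=3$, $J=1$, $S=2$, so your stage is a single step of six simultaneous spirals at frequency $\lambda\sigma$. The spirals of amplitude $\delta^{1/2}$ leave a defect of order $\delta/\sigma$, not $\delta/\sigma^2$.

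For $d=k=2$ one has $J=3$, $S=2$. Your cyclic rule makes step $2$ treat $\eta_3$ for the first time (amplitude $\delta^{1/2}$, error $\delta/\sigma$ in all modes) while treating $\eta_1$ for the last time. Step $3$ has only two spirals, in $\eta_2,\eta_3$, so the $\eta_1$-component $\gtrsim\delta/\sigma$ survives.

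The geometric splitting of amplitudes does not help, since the obstruction comes from the large-amplitude spirals. Redistribution into later coefficients is impossible when no later coefficient of that mode exists. Even a sequential variant with frequencies $\lambda\sigma^j$ loses $\sigma^{1/2}$ at each drop of the amplitude level. A spiral of amplitude $\delta_{s+1}^{1/2}$ meeting a second fundamental form $\lambda_{j-1}\delta_s^{1/2}$ gives error $\delta_{s+1}\sigma^{-1/2}$, not $\delta_{s+1}/\sigma$.

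\textbf{What the paper does instead.} It performs the $N=\mathrm{lcm}(d_*,k)$ corrugations one at a time. The $i$-th corrugation, with $i=jk+\theta=sd_*+\rho$, uses the single normal $E^\theta_{i-1}$ and the single mode $\eta_\rho$. The argument then rests on four ingredients.

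\emph{Re-decomposition.} At every multiple of $d_*$ it decomposes the whole current defect $\mathcal{D}(g_0-\delta_{s+1}H_0,u_{sd_*})$ afresh. Hence all errors of size $\delta_s/\sigma$ produced at level $s$ are cancelled at level $s+1$.

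\emph{Frequency schedule.} It multiplies the frequency by $\sigma$ at every multiple of $k$, and additionally by $\sigma^{1/2}$ at every multiple of $d_*$. Thus $\lambda_i=\lambda_0\sigma^{1+j+s/2}$ and $\lambda_i\delta_s^{1/2}\sim\lambda_0\sigma^{j+1}\delta_0^{1/2}$ independently of $s$. The extra $\sigma^{1/2}$ is also what makes the terms $a\nabla^2a$ and $\nabla a\otimes\nabla a$ of freshly decomposed coefficients at most $\delta_s/\sigma$.

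\emph{Fresh-normal estimate (Step 5).} It shows inductively that $\langle\partial^2_{pq}u_i,E^\theta_i\rangle$ stays at the old level $\lambda_0\sigma^j\delta_0^{1/2}$ until $E^\theta$ is used in block $j$. In other words, corrugations in the other normals do not excite it. This is the heart of the argument.

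\emph{Resulting error.} Each corrugation then has principal error $\frac{\delta_s^{1/2}}{\lambda_i}\lambda_0\sigma^j\delta_0^{1/2}\le\delta_s/\sigma$. This is what yields decay $\sigma^{-S}$ against growth $\sigma^J$.

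Using all $k$ normals simultaneously in every step removes exactly this fresh-normal mechanism, which is why your bookkeeping cannot close.
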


\medskip

\noindent The above yields the decay rate $S$ of 
$\mathcal{D}$ and the blow-up rate $J$  of $\nabla^2 u$, having the quotient:
\begin{equation*}
\frac{J}{S}= \frac{lcm(d_*,k)}{k} \cdot \frac{d_*}{lcm(d_*,k)} = \frac{d_*}{k}.
\end{equation*}
The H\"older regularity of the limiting immersion deduced from iterating the
{\em stage}, depends only on the aforementioned quotient and the regularity
of $g$, through the formula in (\ref{ran_NK}). We now quote two
auxiliary statements: (i) the {\em pre-stage} which reduces
the given defect to the defect sufficiently small to allow the
applicability of (ii) the {\em Nash-Kuiper iteration scheme}. Both
statements were proved in \cite[Theorem 4.1, Theorem 1.3]{lew_full2d2k} for the
specific case $d=k=2$, they are however independent of
the particular values of $d,k\geq 2$. We note that the effective
assumption on the codimension $k\geq 2$ is only needed to 
employ an alternative {\em step} formula, namely Nash's spirals (rather than
Kuiper's corrugations as in Lemma \ref{lem_step2}), in order
to carry out the {\em initial stage}, yielding an immersion that
satisfies a specific scaling law involving
its first and second derivatives, and the norm of the defect. 
Since the codimension $k=1$ case has been treated in \cite{In2026}, Theorem
\ref{th_final} follows from the two statements below:

\begin{theorem}\label{lem_prestage}\textup{[PRE-STAGE]}
Let $g\in \mathcal{C}(\bar\omega, \R^{d\times d}_{\sym, >})$
be a Riemann metric posed on the closure of an open set $\omega\subset\R^d$ diffeomorphic to $B_1$,
and let $u\in\mathcal{C}^1(\bar\omega, \R^{d+k})$ be an immersion, with:
$$\mathcal{D}(g, u)>0 \quad\mbox{ in } \; \bar\omega.$$ 
Then, there exist an immersability parameter $\underline \gamma \geq 1$
depending only on $g, u$, such that for
every $\epsilon, \delta>0$ there exists $\underbar
u\in\mathcal{C}^\infty(\bar\omega,\R^{d+k})$, satisfying:
\begin{equation*}
\begin{split}
& \frac{1}{\underline \gamma} \Id_d\leq (\nabla \underbar u)^T\nabla
\underbar u \leq \underline\gamma\Id_d \quad\mbox{ and } \quad
\mathcal{D}(g, \underbar u)>0 \quad\mbox{ in } \,\bar\omega,\\
& \|u-\underbar u\|_0\leq \epsilon \quad\mbox{ and }\quad
\|\mathcal{D}(g, \underbar u)\|_0\leq \delta.
\end{split}
\end{equation*}
\end{theorem}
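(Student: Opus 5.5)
The plan is to prove the pre-stage by a classical Nash--Kuiper scheme with only $\mathcal{C}^0$ and $\mathcal{C}^1$ control; no H\"older quantification is needed here. The argument has three steps: regularize and fix $\underline\gamma$, decompose the defect into primitive metrics, and add the primitive metrics one at a time by Nash-type steps.

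\emph{Step 1: regularization and choice of $\underline\gamma$.} Since $\bar\omega$ is compact and $u$ is a $\mathcal{C}^1$ immersion with $\mathcal{D}(g,u)>0$, there is $\eta>0$ with
\[
\mathcal{D}(g,u)\geq 2\eta\,\Id_d \quad\mbox{and}\quad (\nabla u)^T\nabla u\geq 2\eta\,\Id_d \quad\mbox{on } \bar\omega .
\]
We mollify $u$, and $g$ as well, after extending both to a neighbourhood of $\bar\omega$ (possible since $\omega$ is diffeomorphic to $B_1$). This gives a smooth $u_1$ with $\|u_1-u\|_0\leq\epsilon/2$ and $\|\nabla u_1-\nabla u\|_0$ so small that both inequalities above hold with $\eta$ in place of $2\eta$. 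We then set $\underline\gamma = \max\{1/\eta,\ \|g\|_0+1\}$. Every short immersion satisfies $(\nabla v)^T\nabla v < g\leq\underline\gamma\,\Id_d$, so the upper bound in the conclusion is automatic once shortness is preserved. The lower bound follows from $(\nabla v)^T\nabla v\geq (\nabla u_1)^T\nabla u_1\geq\eta\,\Id_d$, provided every correction we make only adds a positive semidefinite term, up to an error smaller than $\eta/2$. If needed, $\underline\gamma$ is doubled to absorb that error; it then depends only on $g,u$ and not on $\epsilon,\delta$.

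\emph{Step 2: decomposition of the target defect.} Given $\delta$, we aim for the smooth positive definite field
\[
D=\mathcal{D}(g_s,u_1)-\tfrac{\delta}{2}\,\chi,
\]
where $g_s$ is a smooth approximation of $g$ with $\|g-g_s\|_0\leq\delta/8$, and $\chi$ is chosen so that $0<\chi\leq\Id_d$ and $D>0$. Taking $\chi=\min\{1,\eta/\delta\}\,\Id_d$ works. By a partition of unity on $\bar\omega$ and the standard decomposition of positive definite matrices near a fixed one into rank-one matrices $\xi_i\otimes\xi_i$ with positive coefficients, we write
\[
D=\sum_{j}\sum_{i=1}^{d_*}a_{ij}^2\,\xi_{ij}\otimes\xi_{ij},
\]
with smooth $a_{ij}$ and finitely many primitive metrics overall.

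\emph{Step 3: Nash steps.} We add the primitive metrics one at a time. Because $k\geq 2$, we can use Nash's spirals: choose two unit normals $\nu_1,\nu_2$ to the current immersion (normals exist since $k\geq2$, and they are smooth since the immersion is smooth) and set
\[
v=v_{\mathrm{prev}}+\frac{a}{\mu}\bigl(\cos(\mu\,\xi\cdot x)\,\nu_1+\sin(\mu\,\xi\cdot x)\,\nu_2\bigr).
\]
This gives
\[
(\nabla v)^T\nabla v=(\nabla v_{\mathrm{prev}})^T\nabla v_{\mathrm{prev}}+a^2\,\xi\otimes\xi+O(1/\mu),
\]
with a $\mathcal{C}^0$ displacement of order $O(1/\mu)$. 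Since the first-order cross terms cancel by normality, the increment is positive semidefinite up to $O(1/\mu)$. Taking each frequency $\mu$ large enough, after all steps we obtain a smooth $\underline u$ with $\|\underline u-u_1\|_0\leq\epsilon/2$ and
\[
\bigl\|\mathcal{D}(g,\underline u)-\tfrac{\delta}{2}\chi\bigr\|_0\leq\delta/8+\text{errors}\leq\delta/4 .
\]
Hence $\mathcal{D}(g,\underline u)\geq\tfrac{\delta}{2}\chi-\tfrac{\delta}{4}\chi_{\min}>0$, once the errors are taken below a quarter of $\min\{\delta,\eta\}/2$, and $\|\mathcal{D}(g,\underline u)\|_0\leq\delta$.

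\emph{Main obstacle.} The delicate point is bookkeeping the errors so that positivity of $\mathcal{D}$ and the uniform bounds survive all the steps simultaneously. The error terms depend on $\|\nabla^2 v_{\mathrm{prev}}\|$ and on the derivatives of the normal frames, and these grow from step to step. We handle this as usual: the frequencies are chosen inductively, each one after the previous immersion is fixed, so each new error can be made smaller than any prescribed threshold. This is exactly the argument of \cite[Theorem 4.1]{lew_full2d2k}, and it does not depend on $d,k$ beyond $k\geq2$.
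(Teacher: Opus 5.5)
Your proposal is correct and follows essentially the route the paper takes by deferring to \cite[Theorem 4.1]{lew_full2d2k}: mollify, fix $\underline\gamma$ from the lower bounds on $(\nabla u)^T\nabla u$ and $\mathcal{D}(g,u)$, decompose a slightly reduced defect into finitely many primitive metrics via a partition of unity, and cancel them one at a time with Nash's spirals (which is where $k\geq 2$ enters), choosing each frequency after the previous smooth immersion is fixed. The only slip is in the bookkeeping: with $\|g-g_s\|_0\leq\delta/8$ and $\chi=\min\{1,\eta/\delta\}\Id_d$, your lower bounds on $D$ and on $\mathcal{D}(g,\underline u)$ both reduce to $\eta/2-\delta/8$, which is not positive once $\delta\geq 4\eta$; so either require $\|g-g_s\|_0\leq\min\{\delta,\eta\}/8$, or simply assume without loss of generality that $\delta\leq\eta$, which also makes $\chi$ unnecessary.
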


\begin{theorem}\label{thm_NK} {\textup{[NASH-KUIPER'S ITERATION]}}
Let $g\in \mathcal{C}^{r,\beta}(\bar\omega, \R^{d\times d}_{\sym,>})$
and $\underbar u\in\mathcal{C}^\infty(\bar\omega, \R^{d+k})$ be
defined on the closure of an open set $\omega\subset\R^d$ diffeomorphic to $B_1$,
for some regularity exponents in (\ref{regu}) and the immersability
constant $\underline\gamma\ge \max\{1, \|g\|_0\}$ as in (\ref{imma}).
Assume that:
$$\mathcal{D}(g,\underline u) >0 \quad\mbox{in }\,\bar\omega
\quad \mbox{ and } \quad \|\mathcal{D}(g,\underline u)\|_0\leq
\bar\rho_{\underline\gamma},$$
where $\bar\rho_{\underline\gamma}$ is a constant, depending on $\underline\gamma, d$.
Assume further that for some $S,J,p>0$ and for:
$$\underline \delta\in (0,1), \quad \underline\gamma >1,\quad \underline\sigma>1$$ 
depending only on $\omega, g, \underline u, \gamma, d, k, S, J, p$, the following holds:
\begin{equation}\label{STA}
\left[\qquad  \begin{minipage}{11cm}
Given any $u\in\mathcal{C}^2(\bar\omega,\R^{d+k})$ and any $\delta, \lambda, \sigma$ such that:
\begin{equation*}
\begin{split}
& \delta\leq \underline\delta,\qquad \lambda\delta^{1/2}\geq 1,
\qquad\sigma\geq \underline\sigma,\qquad \sigma^p\delta^{1/2}\leq 1,\\ 
& \|\nabla u - \nabla \underline u\|_0\leq \frac{\rho_{\underline\gamma}}{2},\\
& \|\mathcal{D}(g-\delta H_0, u)\|_0\leq\frac{r_0}{4}\delta \quad\mbox{
and }\quad \|u\|_2\leq \lambda \delta^{1/2},
\end{split}
\end{equation*}
with $r_0, H_0$ as in Lemma \ref{lem_dec_def} and
$\rho_{\underline\gamma}$ as in Lemma \ref{lem_propa},
there exists $\tilde u\in\mathcal{C}^2(\bar\omega,\R^{d+k})$ satisfying:
\begin{equation*}
\begin{split}
& \|\tilde u-u\|_1\leq C\delta^{1/2},\qquad \|\tilde u\|_2\leq C\lambda \sigma^J\delta^{1/2},\\
&\Big\|\mathcal{D}\Big(g-\frac{\delta}{\sigma^S}H_0,\tilde u\Big)\Big\|_0\leq
  \frac{r_0}{5}\frac{\delta}{\sigma^S} + \frac{\|g\|_{r,\beta}}{\lambda^{r+\beta}},
\end{split}
\end{equation*}
with constants $C$ depending only on $\omega, g, \underline u,
\underline\gamma, d, k, S, J, p$.
\end{minipage}\quad \right]
\end{equation}
Then, for every $\epsilon>0$ and every exponent $\alpha$ in the range:
\begin{equation}\label{ran_NK}
0<\alpha<\min\Big\{\frac{r+\beta}{2},\frac{1}{1+ 2J/S}\Big\},
\end{equation}
there exists an immersion $\bar u\in\mathcal{C}^{1,\alpha}(\bar\omega, \R^{d+k})$ such that:
\begin{equation}\label{outcome2}
\|\bar u - \underline u\|_0\leq \epsilon\quad\mbox{ and }\quad
\mathcal{D}(g,\bar u)=0\quad\mbox{in }\; \bar\omega.
\end{equation}
\end{theorem}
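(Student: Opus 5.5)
We prove Theorem \ref{thm_NK} by the standard Nash--Kuiper iteration: apply the assumed stage (\ref{STA}) infinitely many times along geometric sequences of parameters, and pass to the limit by interpolation. The first step sets up the initial configuration. Since $\mathcal{D}(g,\underline u)>0$ is small, we choose $\delta_0\leq\underline\delta$ small and write $g-\delta_0H_0$ relative to $\underline u$. Using the decomposition Lemma \ref{lem_dec_def}, in which $H_0$ is a fixed positive definite matrix and $r_0$ a radius of admissible perturbations, together with $\|\mathcal{D}(g,\underline u)\|_0\leq\bar\rho_{\underline\gamma}$, we may need one preliminary stage (or the pre-stage argument) to reach $\|\mathcal{D}(g-\delta_0H_0,u_0)\|_0\leq \frac{r_0}{4}\delta_0$. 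Here $u_0$ is a mollification of $\underline u$, and $\|u_0\|_2\leq\lambda_0\delta_0^{1/2}$ for some $\lambda_0$ with $\lambda_0\delta_0^{1/2}\geq1$. The condition $\|\nabla u_0-\nabla\underline u\|_0\leq\rho_{\underline\gamma}/2$ holds trivially at this point.

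The second step is the inductive choice of parameters. Fix $\sigma\geq\underline\sigma$ large, to be chosen, and set
\[
\delta_{i+1}=\frac{\delta_i}{\sigma^S},\qquad \lambda_{i+1}=\lambda_i\sigma^J,
\]
where the second relation is dictated by $\|u_{i+1}\|_2\leq C\lambda_i\sigma^J\delta_i^{1/2}$. We must check that $\|u_{i+1}\|_2\leq\lambda_{i+1}\delta_{i+1}^{1/2}$, up to adjusting the ratio with a constant absorbed into $\sigma$, and that the error $\|g\|_{r,\beta}\lambda_i^{-(r+\beta)}$ in the defect estimate is at most $\frac{r_0}{20}\delta_{i+1}$. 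Then the total defect is at most $\frac{r_0}{4}\delta_{i+1}$ and the hypotheses of (\ref{STA}) are renewed. This last requirement reads $\lambda_0^{r+\beta}\sigma^{iJ(r+\beta)}\gtrsim\delta_0^{-1}\sigma^{(i+1)S}$. It holds for all $i$ provided $J(r+\beta)\geq S$, up to strict inequalities, and $\lambda_0$ is chosen large relative to $\delta_0$. When instead $r+\beta$ is small, we slow the growth by taking $\lambda_{i+1}=\lambda_i\sigma^{a}$ with $a\geq J$, chosen so that $a(r+\beta)>S$. This is exactly the source of the constraint $\alpha<\frac{r+\beta}{2}$. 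We also verify $\sigma^p\delta_i^{1/2}\leq1$, which holds for all $i$ once $\delta_0$ is small since $\delta_i$ decreases. Finally, $\|\nabla u_i-\nabla\underline u\|_0\leq\sum_j C\delta_j^{1/2}\leq C\delta_0^{1/2}$ is a convergent geometric sum, which stays below $\rho_{\underline\gamma}/2$ for $\delta_0$ small.

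The third step is convergence and regularity. The bound $\|u_{i+1}-u_i\|_1\leq C\delta_i^{1/2}$ gives convergence in $\mathcal{C}^1$ to some $\bar u$ with $\mathcal{D}(g,\bar u)=0$, because $\delta_i\to0$. Interpolation gives
\[
\|u_{i+1}-u_i\|_{1,\alpha}\leq C\|u_{i+1}-u_i\|_1^{1-\alpha}\|u_{i+1}-u_i\|_2^{\alpha}\leq C\delta_i^{1/2}\lambda_{i+1}^\alpha .
\]
This decays geometrically like $\sigma^{i(-S/2+\alpha a)}$, so it is summable iff $\alpha<\frac{S}{2a}$. With $a$ slightly above $\max\{J,S/(r+\beta)\}$, and after absorbing the constants $C$ by taking $\sigma$ large, this yields every $\alpha$ in (\ref{ran_NK}). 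The resulting exponent appears as $\frac{1}{1+2J/S}$ rather than $\frac{S}{2J}$. I expect this is because the constant $C$ in $\|\tilde u\|_2\leq C\lambda\sigma^J\delta^{1/2}$ forces the effective frequency ratio to grow, with $\lambda_{i+1}\delta_{i+1}^{1/2}$ versus $\lambda_i\delta_i^{1/2}$. More precisely, the bookkeeping should use $\mu_i=\lambda_i\delta_i^{1/2}$, which satisfies $\mu_{i+1}=\mu_i\sigma^{J}$ while $\delta_{i+1}=\delta_i\sigma^{-S}$. The Hölder gain then compares $\delta_i^{1/2}$ with $\mu_{i+1}^\alpha\delta_{i+1}^{-\alpha/2}$, giving the condition $S/2>\alpha(J+S/2)$, that is $\alpha<\frac{1}{1+2J/S}$. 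Uniform smallness $\|\bar u-\underline u\|_0\leq\epsilon$ follows from $\|u_0-\underline u\|_0$ small plus $\sum C\delta_i^{1/2}$ small.

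The main obstacle is this parameter bookkeeping: simultaneously matching the defect decay against the $\lambda^{-(r+\beta)}$ error and the $\mathcal{C}^2$ growth, and tracking constants uniformly in $i$, so that the strict inequalities yield exactly the range (\ref{ran_NK}).
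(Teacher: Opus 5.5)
\textbf{Main gap: the starting point.} Your Steps 2--3 are the standard iteration. Your Step 1, however, never produces an admissible starting point, and that is where the real work of this theorem lies.

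A mollification $u_0=\underline u*\phi_l$ has $\mathcal{D}(g,u_0)\approx\mathcal{D}(g,\underline u)$. This is an arbitrary positive definite field of size up to $\bar\rho_{\underline\gamma}$. But (\ref{STA}) can only be entered if $\mathcal{D}(g,u_0)$ lies within $\frac{r_0}{4}\delta_0$ of $\delta_0H_0$, with $\delta_0\leq\underline\delta$ and $\sigma^p\delta_0^{1/2}\leq 1$. So $\delta_0$ must in general be far smaller than $\bar\rho_{\underline\gamma}$.

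None of your fallbacks closes this:
\begin{itemize}
\item (\ref{STA}) cannot serve as the ``preliminary stage'', because its hypotheses are precisely what is missing.
\item Lemma~\ref{lem_dec_def} only decomposes matrices in $B(H_0,r_0)$.
\item The pre-stage Theorem~\ref{lem_prestage}, even applied to $g-\delta_0H_0$, gives no control of $\|\nabla u_0-\nabla\underline u\|_0$. That bound is not ``trivial'' once $u_0$ genuinely differs from $\underline u$.
\end{itemize}

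What is needed is a separate initial stage. As far as the paper indicates --- it cites \cite{lew_full2d2k} for this theorem and names Nash's spirals for the initial stage --- it runs roughly as follows.
\begin{itemize}
\item Choose $\delta_0$ so small that $\mathcal{D}(g,\underline u)-\delta_0H_0$ (with $g$ mollified) stays positive definite. This is where $\mathcal{D}(g,\underline u)>0$ is used.
\item Decompose this field as $\sum_i a_i^2\eta_i\otimes\eta_i$ by a Nash-type lemma, with $a_i^2\lesssim\bar\rho_{\underline\gamma}$.
\item Cancel the terms one at a time, using rapidly increasing frequencies, until all errors fall below $\frac{r_0}{8}\delta_0$.
\end{itemize}
The corrugation of Lemma~\ref{lem_step2} cannot do this. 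Its error $\mathcal{R}_0=\bar\Gamma'(\lambda t)^2a^4|T_u\eta|^2\eta\otimes\eta$ is of order $\bar\rho_{\underline\gamma}^2$ regardless of $\lambda$, hence not $\ll\delta_0$. One uses instead Nash's spirals in two normal directions, which is exactly where $k\geq 2$ enters.

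Each such step moves $\nabla u$ by an amount of order $\|a_i\|_0\lesssim\bar\rho_{\underline\gamma}^{1/2}$. Presumably this is the role of the hypothesis $\|\mathcal{D}(g,\underline u)\|_0\leq\bar\rho_{\underline\gamma}$: it keeps $\nabla u_0$ well inside the $\rho_{\underline\gamma}/2$-neighbourhood of $\nabla\underline u$, leaving room for the later sum $\sum_i C\delta_i^{1/2}$. Only after this is $\lambda_0$ fixed, large enough that $\|u_0\|_2\leq\lambda_0\delta_0^{1/2}$ and that $\|g\|_{r,\beta}\lambda_0^{-(r+\beta)}$ is dominated.

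\textbf{Secondary issue: the parameter choice in Step 2.} As written it is wrong. With $\lambda_{i+1}=\lambda_i\sigma^J$ and $\delta_{i+1}=\delta_i\sigma^{-S}$ one has $\lambda_{i+1}\delta_{i+1}^{1/2}=\sigma^{-S/2}\lambda_i\sigma^J\delta_i^{1/2}$. So the renewed bound $\|u_{i+1}\|_2\leq\lambda_{i+1}\delta_{i+1}^{1/2}$ fails by the factor $\sigma^{S/2}$, which cannot be absorbed into a constant. For the same reason, taking $a$ slightly above $\max\{J,S/(r+\beta)\}$ would ``yield'' the spurious range $\alpha<S/(2J)$.

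One must instead take $\lambda_{i+1}=\lambda_i\sigma^a$ with $a$ slightly above $\max\{J+S/2,\,S/(r+\beta)\}$ and $\sigma^{a-J-S/2}\geq C$. Then $\|u_{i+1}-u_i\|_{1,\alpha}\leq C\delta_i^{1/2}(\lambda_i\sigma^J)^\alpha\leq C'\sigma^{i(\alpha a-S/2)}$, with $C'$ independent of $i$. This is summable precisely for $\alpha<S/(2a)$, which gives (\ref{ran_NK}). Your closing $\mu_i$ computation amounts to this, so that part is repairable. The missing initial stage is the genuine gap.
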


\medskip

\noindent The first condition in (\ref{outcome2}) may
be replaced by $\|\bar u - \underline u\|_{0,\bar\alpha}\leq \epsilon$, for any
fixed independent $\bar\alpha\in (0,1)$. Indeed, $\bar u$ is obtained as the limit of a
sequence of immersions $\{u_n\in\mathcal{C}^2(\bar\omega,\R^{d+k})\}_{n\to\infty}$,
converging in $\mathcal{C}^{1,\alpha}$ and obtained by successive
applications of the stage statement (\ref{STA}). For a given
$\epsilon>0$, the first immersion $u_0$ is chosen to satisfy:
$$\|u_0-\underline u\|_0\leq\frac{\epsilon}{2},\qquad
\|\nabla u_0- \nabla \underline u\|_0\leq C,$$
with constant $C$ depending only on $\omega, \underline u, g$, whose
magnitude is essentially the order $1$ quantity with respect to $\|\mathcal{D}(g,\underline
u)\|_0^{1/2}$. The subsequent immersions satisfy then the inductive bounds:
$$\|u_{n+1}-u_n\|_1\leq C\delta_n^{1/2}, \quad \mathcal{D}(g,u_n)\leq C\delta_n
\quad \mbox{ with }\quad C\sum_{n=0}^\infty\delta_n^{1/2}
\leq \frac{\epsilon}{2}.$$
Hence, $\{u_n\}_{n\to\infty}$ is a Cauchy sequence in
$\mathcal{C}^1$ (it is such also in $\mathcal{C}^{1,\alpha}$) and
the limiting $\bar u$ satisfies:
$$\|\bar u-\underline u\|_0\leq\epsilon,\qquad
\|\nabla \bar u - \nabla \underline u\|_0\leq C,$$
which implies, by the interpolation inequality with respect to $\bar\alpha<1$:
$$\|\bar u-\underline u\|_{0,\bar\alpha}\leq C \|\bar u-\underline u\|_{1}^{\bar\alpha}
\|\bar u-\underline u\|_{0}^{1-\bar\alpha}\leq C \epsilon^{1-\bar\alpha}.$$
This implies that a subsolution $\bar u$ is the limit in $\mathcal{C}^{0,\bar\alpha}(\bar\omega,
\R^{d+k})$ of the exact isometric immersions $\bar u$ of regularity
$\mathcal{C}^{1,\alpha}(\bar\omega, \R^{d+k})$, with the exponent $\alpha$ limited
according to the range (\ref{range}).

\bigskip

\noindent We also remark that the same result as in Theorem \ref{th_final}, remains valid in any target
space $\R^n$ replacing $\R^{d+k}$, for $n\geq d+k$. Our
proofs only require existence of and the propagation bounds on $k$
normal vector fields, which are guaranteed by
having $n-d\geq k$. The same statement as in Theorem
\ref{thm_STA} also holds and one concludes the final result by a
version of Theorem \ref{thm_NK}.

\medskip

\subsection{Comparison with the literature} Our present result
contributes to the growing body of work on the
flexibility of $\mathcal{C}^{1,\alpha}$ isometric immersions and
solutions to related PDEs. 
The study of flexibility in the context of the system (\ref{II}) originates from the
classical Nash-Kuiper theorem \cite{Nash1, Kuiper} about
$\mathcal{C}^1$ isometric immersions of a $d$-dimensional Riemannian manifold
into $\mathbb{R}^{d+1}$ (in our terminology, this corresponds to
codimension $k=1$). The H\"older-regular isometric
immersions were studied by Borisov \cite{Borisov1958, Borisov1965}, who conjectured that
the Nash-Kuiper theorem extended to $\mathcal{C}^{1,\alpha}$-regularity for
$\alpha < \frac{1}{1 + d^2 + d}$ and $\alpha < \frac{1}{5}$ for $d = 2$.  
He provided a proof for the exponent $\alpha <\frac{1}{7}$, dimension $d=2$
and the analytic metric $g$ in \cite{Borisov2004}. 
Borisov's conjecture was confirmed by Conti, De Lellis and Sz\'ekelyhidi
in \cite{CDS}, whereas the case
$\alpha < \frac{1}{5}$, $d = 2$ was resolved by De Lellis, Inauen and Sz\'ekelyhidi in \cite{DIS1/5}.  
These results were generalized to compact manifolds by Cao and
Sz\'ekelyhidi in \cite{CaoSze2022}. Locally, it has been recently improved to 
$\alpha < \frac{1}{d^2 - d + 1}$ by Cao, Hirsch and Inauen in \cite{CHI2}
and to $\alpha<\frac{1}{2d-1}$ by Inauen in \cite{In2026}.

\medskip

\noindent If the codimension $k$ is larger than $1$, one can construct more regular isometric
immersions. In this direction, Nash first showed \cite{Nash2} that any
manifold $d$-dimensional Riemannian manifold with metric $g \in
\mathcal{C}^r$, $r \geq 3$, admits a
$\mathcal{C}^r$-regular isometric immersion into $\mathbb{R}^{d+k}$
for sufficiently large $k$.  
The codimension bounds were later established and refined by Gromov
\cite{GromovPdr} and G\"unther \cite{Guenther}, while the case of  $g \in
\mathcal{C}^{r,\beta}$ with $r+\beta>2$, was was treated by Jacobowitz \cite{Jaco}.
When the metric's regularity is lower, i.e. $
r + \beta<2$, K\"all\'en \cite{Kallen} constructed a
$\mathcal{C}^{1,\alpha}$ immersion for any $\alpha < \frac{r + \beta}{2}$
when $k \geq 2d_*(d+1)$ is sufficiently large. 
See also \cite{DI2020,CaoIn2024,CaoSz2025} for further
results about isometric immersions in high codimension. 
Recently, Lewicka proved in  \cite{lew_full2d2k} that in dimension
$d=2$ and codimension $k\geq 2$, every
subsolution to (\ref{II}) with the metric $g\in\mathcal{C}^{r,\beta}$
admits an approximating sequence of $\mathcal{C}^{1,\alpha}$ isometric
immersions, for all $\alpha<\min\{\frac{r+\beta}{2}, 1\}$.

\medskip

\noindent When $d=2, k=1$, we quote the following rigidity results. Firstly, according to the classical
statements due to Cohn-Vossen \cite{CV} and Herglotz
\cite{Her}, in the resolution of the Weyl problem \cite{Weyl}, a
$\mathcal{C}^2$ isometric immersion of $\mathbb{S}^2$ into $\R^3$ must
be a rigid motion, i.e. a composition
of a rotation and a translation. Second, any $\mathcal{C}^2$ surface
with positive Gauss's curvature must be locally convex; for
generalizations of this statement we refer to \cite{HarNir}, and 
to \cite[Chapter II]{Pogo1}, \cite[Chapter IX]{Pogo2}
where the requirement of the $\mathcal{C}^2$ regularity is
replaced by the requirement that the immersion is $\mathcal{C}^1$ 
and that the measure on $\mathbb{S}^2$ induced by the Gauss map has
bounded variation.
%and that the image of the Gauss map has measure zero in $\mathbb{S}^2$. 
Third, using geometric arguments, the same rigidity has been
proved by Borisov in a series of papers \cite{Borisov1958, Borisov1965}, for
$\mathcal{C}^{1,\alpha}$ isometric immersions at $\alpha>\frac{2}{3}$, of
$\mathcal{C}^2$ metrics, with a simpler
analytic proof of this result obtained in \cite{CDS}.
In particular, we see that $\frac{2}{3}$ is an upper bound
on the range of H\"older exponents that can be reached using convex
integration for isometric immersions of $2$-dimensional metrics into
$\R^{2+1}$. We point out that no such
rigidity statement is possible for the isometric immersions already
into $\R^4$, in view of \cite{lew_full2d2k}.

\medskip

\noindent The parallel version of the Nash-Kuiper scheme as in the papers
cited above, was first developed to handle 
flexibility of $\mathcal{C}^{1,\alpha}$ weak solutions to the 
Monge-Amp\`ere equation by Lewicka and Pakzad in
\cite{lewpak_MA}, where the result was proved for
$\alpha<\frac{1}{7}$. This regularity exponent was later increased to
$\alpha<\frac{1}{5}$ and $\alpha<\frac{1}{3}$ by Cao and Szekelyhidi \cite{CS}, and
by Cao, Hirsch and Inauen \cite{CHI}, respectively. Further,
in \cite{lew_conv} Lewicka introduced the Monge-Amp\`ere system
and obtained flexibility of its solutions for any 
$\alpha<\min\{\frac{\beta}{2}, \frac{1}{1+2d_*/k}\}$ given the right hand side field
$A\in\mathcal{C}^{0,\beta}$, where the dimension $d$ and $k$ were arbitrary;
and also for any $\alpha<\min\{\frac{\beta}{2}, 1\}$ provided that $k\geq 2d_*$.
For the special case $d=2$, weak solutions of progressively higher
regularity, depending on the value of $k$, were obtained in the sequence of
papers \cite{lew_improved, lew_improved2, in_lew}. 
In \cite{in_lew2}, Inauen and Lewicka established the full flexibility
of the Monge-Amp\'ere system for $d=k=2$, and in \cite{CHIL} their
result was extended to arbitrary $d$ and the corresponding $k=d_*-d+1$.

\smallskip

\subsection{Organization of the paper} 

Section \ref{sec_step} begins by recalling the standard mollification
estimates and the lemma on the (local) decomposition of the defect into rank-one
primitive defects. We then introduce the new
{\em step} construction, based on the Kuiper corrugation ansatz from
\cite{lew_conv},  lifted here to the fully nonlinear setting of
(\ref{II}). Its proof is deferred to section \ref{sec_appA}. We
also present several auxiliary lemmas concerning the propagation of the normal frame to the
immersions generated in the inductive course of the {\em stage}. As a byproduct
of our estimates, we deduce existence of the normal frame to any
$\mathcal{C}^1$-regular immersion, with its derivatives obeying a
natural bound with multiplicative constants
that depend only on the immersability parameter and the modulus of
continuity of the tangent space. These results are of independent
interest and  they are proved in section \ref{sec_appB}.

\medskip

\noindent Section \ref{sec_khamsa} is devoted
to the proof of Theorem \ref{thm_STA}, which through the
Nash-Kuiper iteration and the pre-stage constructions, stated in Theorems
\ref{thm_NK} and \ref{lem_prestage}, respectively, yields our main Theorem \ref{th_final}.
The modified immersion $\tilde u$ produced in a single stage is obtained through
$N=lcm(d_*, k)$ steps (i.e. the least common multiple of the Janet dimension
and the codimension). Each step applies
the Kuiper corrugation to the effect of replacing one rank-one primitive component of
the defect $\mathcal{D}$ by higher order error terms. Since $\mathcal{D}$
consists of $d_*$ such components, every time the step counter
goes over a multiple of $d_*$, the defect decreases
by one factor of $\sigma$. On the other hand, whenever the step counter goes over a multiple of
$k$, the  size of $\nabla^2u$ increases, also essentially by one
factor of $\sigma$. Consequently, after $N$ steps one obtains the
fundamental blow-up/decay
ratio $J/S = d_*/k$. Throughout the construction, each step necessitates
the update of the normal frame to the modified immersion, and
components of  $\nabla^2u$ are then estimated separately along the
various normal directions, exploiting the bounds on the
propagation of the normal frame given in the auxiliary lemmas.

\medskip

\noindent Finally, in section \ref{sec_nonEu}
we provide the proof of Theorem \ref{th_elasticity} by  applying
a suitable version of the Kirchhoff-Love extension, formulated with
respect to the ambient
metric $g$, to the H\"older regular isometric immersions constructed
in the preceding sections.

\smallskip

\subsection{Notation} 

\noindent By $\mathbb{R}^{d\times d}_{\sym}$ we denote the space of symmetric
$d\times d$ matrices, and $\mathbb{R}^{d\times d}_{\sym, >}$ stands for the set of such matrices which are positive definite. The space of H\"older continuous vector fields
$\mathcal{C}^{m,\alpha}(\bar\omega,\R^k)$ consists of restrictions of
all $f\in \mathcal{C}^{m,\alpha}(\mathbb{R}^d,\R^k)$ to the closure of
an open, bounded set $\omega\subset\R^d$. The
$\mathcal{C}^m(\bar\omega,\R^k)$ norm of this restriction is
denoted by $\|f\|_m$, while its H\"older norm in $\mathcal{C}^{m,
  \alpha}(\bar\omega,\R^k)$ is $\|f\|_{m,\alpha}$. By $\nabla^{(m)}f$
we denote the table-valued field of all partial derivatives of $f$ of order
$m$. If $d=k$, the symmetric part of $\nabla f$ is: $\sym\nabla f = (\nabla f + (\nabla f)^T)/2$.
By $C$ we denote a universal constant that may change from line to line.
% but it depends only on the specified parameters.

\section{The preparatory statements}\label{sec_step}

In this section, we collect several technical ingredients that will be
used in the proof of Theorem \ref{thm_STA}. 
The first lemma concerns the convolution and commutator estimates from \cite{CDS}.

\begin{lemma}\label{lem_stima}
Let $\phi\in\mathcal{C}_c^\infty(\R^d,\mathbb{R})$ be a standard
mollifier that is nonnegative, radially symmetric, supported on the
unit ball $B(0,1)\subset\R^d$ and such that $\int_{\mathbb{R}^d} \phi \dx = 1$. Denote: 
$$\phi_l (x) \doteq \frac{1}{l^d}\phi(\frac{x}{l})\quad\mbox{ for all
}\; l\in (0,1], \;  x\in\R^d.$$
Then, for every $f,g\in\mathcal{C}^0(\mathbb{R}^d,\R)$, every
$m\geq 0$ and $r\in\{0,1\}$, $\beta\in (0,1]$, there holds:
\begin{align*}
& \|\nabla^{(m)}(f\ast\phi_l)\|_{0} \leq
\frac{C}{l^m}\|f\|_0,\tag*{(\theequation)$_1$}\vspace{1mm} \refstepcounter{equation} \label{stima1}\\
& \|f - f\ast\phi_l\|_0\leq l^{r
+\beta} \|f\|_{r,\beta},\tag*{(\theequation)$_2$} \vspace{1mm} \label{stima2}\\
& \|\nabla^{(m)}\big((fg)\ast\phi_l - (f\ast\phi_l)
(g\ast\phi_l)\big)\|_0\leq {C}{l^{2- m}}\|\nabla f\|_{0}
\|\nabla g\|_{0}, \tag*{(\theequation)$_3$} \label{stima4}
\end{align*}
with constants $C>0$ depending only on $d$ and $m$. 
\end{lemma}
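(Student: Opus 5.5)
We prove the three estimates separately, using only the definition $f\ast\phi_l(x)=\int f(x-y)\phi_l(y)\,\mathrm{d}y$, the normalization $\int\phi_l=1$, the support property $\mathrm{supp}\,\phi_l\subset B(0,l)$, and the symmetry $\phi_l(-y)=\phi_l(y)$.

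For \ref{stima1}, all derivatives are placed on the mollifier: $\nabla^{(m)}(f\ast\phi_l)=f\ast\nabla^{(m)}\phi_l$. Scaling gives $\nabla^{(m)}\phi_l(x)=l^{-m-d}(\nabla^{(m)}\phi)(x/l)$, so $\|\nabla^{(m)}\phi_l\|_{L^1}=l^{-m}\|\nabla^{(m)}\phi\|_{L^1}$. Young's inequality then yields the bound with $C=\|\nabla^{(m)}\phi\|_{L^1}$, which depends only on $d$ and $m$.

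For \ref{stima2}, write
$$f(x)-f\ast\phi_l(x)=\int\big(f(x)-f(x-y)\big)\phi_l(y)\,\mathrm{d}y.$$
\begin{itemize}
\item If $r=0$, then $|f(x)-f(x-y)|\le [f]_\beta |y|^\beta\le l^\beta\|f\|_{0,\beta}$ for $|y|\le l$, and integrating against $\phi_l$ gives the claim.
\item If $r=1$, use the symmetry of $\phi$ to subtract the linear term: $\int \nabla f(x)\cdot y\,\phi_l(y)\,\mathrm{d}y=0$. Hence
$$f(x)-f\ast\phi_l(x)=\int\big(f(x)-f(x-y)-\nabla f(x)\cdot y\big)\phi_l(y)\,\mathrm{d}y .$$
By the mean value theorem, the integrand is bounded by $\sup_{t}|\nabla f(x-ty)-\nabla f(x)|\,|y|\le [\nabla f]_\beta |y|^{1+\beta}\le l^{1+\beta}\|f\|_{1,\beta}$.
\end{itemize}
The constant is $1$, as stated, because $\phi_l\ge0$ has unit mass.

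For \ref{stima4}, the key step is the identity
$$(fg)\ast\phi_l-(f\ast\phi_l)(g\ast\phi_l)=\big((f-f_l)(g-g_l)\big)\ast\phi_l-(f-f_l)(g-g_l),\qquad f_l(x,y)\doteq f(x-y),$$
understood pointwise. Concretely, at a fixed $x$ set $F(y)=f(x-y)-f\ast\phi_l(x)$ and $G(y)=g(x-y)-g\ast\phi_l(x)$. Expanding the product and using $\int\phi_l=1$ gives
$$(fg)\ast\phi_l(x)-(f\ast\phi_l)(g\ast\phi_l)(x)=\int F(y)G(y)\phi_l(y)\,\mathrm{d}y .$$
For $|y|\le l$ we have $|F|\le 2l\|\nabla f\|_0$ and $|G|\le 2l\|\nabla g\|_0$, which gives the case $m=0$.

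For $m\ge1$, write the commutator as
$$\int\!\!\int \big(f(x-y)-f(x-z)\big)\big(g(x-y)-g(x-z)\big)\tfrac12\phi_l(y)\phi_l(z)\,\mathrm{d}y\,\mathrm{d}z ,$$
which follows by symmetrization. Derivatives in $x$ cannot be moved onto the mollifiers directly here, since the integrand depends on $x$ through $f$ and $g$. Instead, change variables $y'=x-y$, $z'=x-z$, so that $x$ enters only through $\phi_l(x-y')\phi_l(x-z')$. Now $\nabla_x^{(m)}$ falls on this product of mollifiers, and each derivative costs $l^{-1}$ in $L^1$. The integrand is supported where $|y'-z'|\le 2l$, so it is still bounded by $4l^2\|\nabla f\|_0\|\nabla g\|_0$. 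This yields the factor $Cl^{2-m}$.

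The one delicate point is this last step: arranging the commutator so that all derivatives land on the mollifiers while the quadratic smallness $l^2$ survives. The double-integral symmetrized form handles this.

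If $f$ and $g$ are not assumed globally Lipschitz or Hölder, the right-hand sides are infinite and the inequalities are trivial.
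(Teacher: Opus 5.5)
Your proof is correct. The paper does not prove this lemma itself; it quotes the convolution and commutator estimates from \cite{CDS}, so your argument supplies a self-contained proof where the paper relies on a citation.

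For \ref{stima1} and \ref{stima2} you follow the standard route. You put all derivatives on $\phi_l$ and apply Young's inequality. When $r=1$ you use the evenness of $\phi$ to remove the linear Taylor term, and nonnegativity with unit mass gives the constant $1$.

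For \ref{stima4}, your two formulas are the two standard expressions of a covariance. The commutator at $x$ is the covariance of $f$ and $g$ with respect to the probability measure $\phi_l(x-y')\,\mathrm{d}y'$. After the shift $y'=x-y$, $z'=x-z$, the symmetrized double integral lets every $x$-derivative fall on $\phi_l(x-y')\phi_l(x-z')$. The factor $|y'-z'|^2\leq 4l^2$ survives, so all $m$ are treated at once.

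A frequently used alternative rests on the fact that the commutator is unchanged when constants are added to $f$ and $g$. One fixes $x_0$ and replaces $f,g$ by $f-f(x_0)$, $g-g(x_0)$. Then one bounds $\nabla^{(m)}\big((fg)\ast\phi_l\big)(x_0)$ directly, and expands $\nabla^{(m)}\big((f\ast\phi_l)(g\ast\phi_l)\big)(x_0)$ by the Leibniz rule, with each factor costing $l^{1-j}$.

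Both routes give $Cl^{2-m}$. Yours has an extra benefit: replacing $|f(y')-f(z')|\leq 2l\|\nabla f\|_0$ by $|f(y')-f(z')|\leq (2l)^\alpha\|f\|_{0,\alpha}$ gives, unchanged, the H\"older variant $Cl^{2\alpha-m}\|f\|_{0,\alpha}\|g\|_{0,\alpha}$. The paper also borrows that variant from \cite{CDS} in Section \ref{sec_nonEu}.

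One presentational fix: your first displayed ``key identity'' mixes two meanings of $f_l$ and is not correct as written. The subtracted term should be $(f-f\ast\phi_l)(g-g\ast\phi_l)(x)$, a function of $x$ alone. Drop that display and keep the concrete formula with $F$ and $G$, which is correct and is all you actually use.
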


\medskip

\noindent The next lemma provides a decomposition of symmetric
matrices into linear combinations of rank-one {\em primitive matrices}.

\begin{lemma}\label{lem_dec_def}
There exist the unit vectors $\{\eta_i\in\R^d\}_{i=1}^{d_*}$, the
linear maps $\big\{\bar a_{i}:\R^{d\times d}_\sym\to
\R\big\}_{i=1}^{d_*}$, and $r_0\in(0,1)$, depending only on $d$, such that
$\R^{d\times d}_\sym = \mbox{span}\{\eta_i\otimes \eta_i\}_{i=1}^{d_*}$, with:
$$H=\sum_{i=1}^{d_*}\bar a_{i}(H) \eta_{i}\otimes \eta_{i}
\quad\mbox{ for all } H\in \R^{d\times d}_\sym,$$
and such that, denoting: $H_0 \doteq \sum_{i=1}^{d_*}\eta_{i}\otimes
\eta_{i}$, we have:
$$|\bar a_{i} (H)-1|\leq\frac{1}{2} \quad \mbox{ for all } H\in
B(H_0,r_0)\subset \R^{d\times d}_{\sym}, \; i=1\ldots d_*,$$
\end{lemma}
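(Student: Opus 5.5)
The plan is to pick the $\eta_i$ first, so that the rank-one matrices $\eta_i\otimes\eta_i$ form a basis of $\R^{d\times d}_\sym$. Then $\bar a_i$ is defined as the dual basis, and $r_0$ comes from continuity of these coordinate maps at $H_0$.

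\emph{Choice of vectors.} Let $\{e_j\}_{j=1}^d$ be the standard basis. Take the $d$ vectors $e_j$ together with the $d_*-d = d(d-1)/2$ vectors $\frac{1}{\sqrt2}(e_j+e_l)$ for $j<l$. This gives $d_*$ unit vectors. The family $\{\eta_i\otimes\eta_i\}$ spans $\R^{d\times d}_\sym$, since the symmetric elementary matrices are recovered from it:
$$e_j\otimes e_l+e_l\otimes e_j = 2\,\tfrac{1}{\sqrt2}(e_j+e_l)\otimes\tfrac{1}{\sqrt2}(e_j+e_l) - e_j\otimes e_j - e_l\otimes e_l .$$
The family has exactly $d_*=\dim\R^{d\times d}_\sym$ elements, so it is a basis.

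\emph{Coordinate maps.} Let $\bar a_i:\R^{d\times d}_\sym\to\R$ be the coordinate functionals with respect to this basis. They are linear, depend only on $d$, and satisfy $H=\sum_i \bar a_i(H)\,\eta_i\otimes\eta_i$ by construction. Setting $H_0=\sum_i\eta_i\otimes\eta_i$ gives $\bar a_i(H_0)=1$ for every $i$.

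\emph{Choice of $r_0$.} Each $\bar a_i$ is a linear map on a finite-dimensional space, hence bounded: $|\bar a_i(H)|\le M|H|$, where $M$ depends only on $d$. Put $r_0=\min\{\frac{1}{2M},\frac12\}\in(0,1)$. For $H\in B(H_0,r_0)$ linearity gives
$$|\bar a_i(H)-1| = |\bar a_i(H-H_0)| \le M r_0\le \tfrac12 .$$

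The argument is elementary linear algebra. The only point needing care is verifying that the chosen rank-one family is a basis; the displayed polarization identity handles this. If one wanted explicit control of $M$, it could be computed directly from that identity: the diagonal coordinates involve subtracting the off-diagonal entries, so the coefficients are bounded by roughly $d$.
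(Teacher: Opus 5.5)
Your proposal is correct and takes the same approach as the paper. The paper's vectors $\xi_{ij}=(e_i+e_j)/|e_i+e_j|$, $i\le j$, are exactly your $e_j$ and $(e_j+e_l)/\sqrt2$, and the paper likewise takes $\bar a_i$ to be the dual coordinates with $\bar a_i(H_0)=1$; your explicit choice of $r_0$ from the norm bound on the linear maps $\bar a_i$ supplies the step the paper leaves implicit.
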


\noindent The above result is elementary. Define 
$\{\eta_i\}_{i=1}^{d_*}$ as $ \big\{\xi_{ij}\doteq
\frac{e_i+e_j}{|e_i+e_j|}\big\}_{i,j=1\ldots d,\;i\leq j}$. Then,
the set $\{\xi_{ij}\otimes\xi_{ij}\}_{i,j=1\ldots d,\;i\leq j}$
provides a basis of $\R^{d\times d}_\sym$. Since the set's cardinality
matches the dimension of  $\R^{d\times d}_\sym$, its elements must be
linearly independent, and $\bar a_i(H_0)=1$ for all $i=1\ldots d_*$.

\medskip

\noindent We now present the {\em step} in our convex integration algorithm, in
which a single codimension is used to cancel one rank-one defect of
the form $a(x)^2\eta\otimes \eta$.
The construction utilizes the ansatz that is the
basic building block of the convex integration step for the Monge-Amp\`ere system
in \cite[Lemma 2.1]{lew_conv}. The observation that a similar ansatz also applies in the present fully
nonlinear setting of (\ref{II}), was first made in \cite[Lemma
3.1]{CHI2} and led to a simplification of the analysis as
well as an improvement of the regularity
exponent pertaining to codimension $k=1$, and 
revealed the further close connection between the Monge-Amp\`ere
and the isometric immersion systems.
The proof will of the lemma below be given in section \ref{sec_appA}.
We also note that the version of the ansatz, given in \cite[Lemma
3.1]{CHI2} and \cite[Lemma 2.5]{lew_full2d2k}, featuring an
unbalanced form of the leading terms in \eqref{sisi}, is not sufficient
for our purposes. 

\begin{lemma}\label{lem_step2} \textup{[STEP: KUIPER'S CORRUGATION]}
Let $u\in\mathcal{C}^2(\R^d,\R^{d+k})$ be an immersion and let $E_u
\in\mathcal{C}^{1}(\R^d, \R^{d+k})$ be a unit normal vector field to $u$, so that:
$$(\nabla u)^T E_u = 0,\quad |E_u|=1 \quad\mbox{ in } \R^d.$$
For a given unit vector $\eta\in\R^d$ we set $t=\langle x, \eta\rangle$, and denote:
$$\Gamma(t) = \sqrt{2}\sin t, \quad \bar\Gamma(t) = -\frac{1}{4}\sin(2t), \quad 
\dbar\Gamma(t) = \frac{1}{4}\cos(2t), \quad 
\tbar\Gamma(t) = 1-\frac{1}{2}\cos(2t). $$
Then, for every $\lambda>0$ and $a\in\mathcal{C}^2(\R^d, \R)$, the
vector field $\tilde u\in \mathcal{C}^1(\R^d, \R^{d+k})$ in:
\begin{equation*}
\begin{split}
& \tilde u(x) = u(x)+ \frac{\Gamma(\lambda t )}{\lambda} a(x) E_u(x) + 
\frac{\bar\Gamma(\lambda t )}{\lambda}a(x)^2 T_u(x)\eta 
+ \frac{\dbar\Gamma(\lambda t )}{\lambda^2}a(x) T_u(x)\nabla a(x),
\\ & \mbox{where: } T_u = (\nabla u)\big((\nabla u)^T\nabla
u\big)^{-1}\in\mathcal{C}^1(\R^d, \R^{{(d+k)}\times d}),
\end{split}
\end{equation*}
satisfies the following identity:
\begin{equation}\label{sisi}
\begin{split}
& (\nabla \tilde u)^T\nabla \tilde u - (\nabla u)^T\nabla u -
a^2\eta\otimes \eta \\ & = -\frac{2\Gamma(\lambda
  t)}{\lambda} a\;\big[\langle \partial^2_{ij}u,E_u\rangle\big]_{i,j=1\ldots d} 
+\frac{2\dbar\Gamma(\lambda t)}{\lambda^2}a\nabla^2a 
+ \frac{\tbar\Gamma(\lambda t)}{\lambda^2}\nabla a\otimes \nabla a+ \mathcal{R}.
\end{split}
\end{equation}
The error term $\mathcal{R} = \mathcal{R}(\lambda, a, \eta,\nabla u,
E_u) $ above is written as:
\begin{equation*}
\mathcal{R} =  \mathcal{R}_0 +  \mathcal{R}_1 + \mathcal{R}_2 + \mathcal{R}_3 + \mathcal{R}_4,
\end{equation*}
whose respective components, grouped by the powers of $\lambda^{-1}$, are given by:
\begin{equation*}
\begin{split}
 & \mathcal{R}_0 = \bar\Gamma'(\lambda t)^2a^4|T_u\eta|^2\eta\otimes\eta,
\\ &  \mathcal{R}_1 = - \frac{2\bar\Gamma(\lambda t)}{\lambda} a^2\; \big[
\langle \partial^2_{ij}u,T_u\eta\rangle\big]_{i,j=1\ldots d} 
+ \frac{2\bar\Gamma(\lambda t)\Gamma'(\lambda t)}{\lambda}
a\;\sym\big(\nabla (a^2 T_u \eta)^T (E_u\otimes \eta)\big)  
\\ & \qquad +\frac{2\Gamma(\lambda t)\bar\Gamma'(\lambda t)}{\lambda}
a^2\;\sym\big(\nabla(aE_u)^T T_u(\eta\otimes\eta) \big)
+ \frac{2\bar\Gamma(\lambda t)\bar\Gamma'(\lambda t)}{\lambda}
a^2\;\sym\big(\nabla( a^2T_u\eta)^TT_u(\eta\otimes\eta)\big) 
\\ & \qquad + \frac{2\bar\Gamma'(\lambda t)\dbar\Gamma'(\lambda t)}{\lambda}
a^3\;\big\langle T_u\eta, T_u\nabla a\big\rangle \eta\otimes\eta,
\end{split}
\end{equation*}
\begin{equation*}
\begin{split}
&\mathcal{R}_2 = - \frac{2\dbar\Gamma(\lambda t)}{\lambda^2}a\;\big[ \langle
\partial^2_{ij}u,T_u\nabla a\rangle\big]_{i,j=1\ldots d}
+\frac{2\dbar\Gamma(\lambda t)\Gamma'(\lambda
  t)}{\lambda^2}a\;\sym\big(\nabla(aT_u\nabla a)^T(E_u\otimes \eta) \big) 
\\ & \qquad  + \frac{2\bar\Gamma'(\lambda t)\dbar\Gamma(\lambda
  t)}{\lambda^2}a^2\;\sym\big(\nabla(aT_u\nabla a)^T T_u(\eta\otimes\eta) \big)
+\frac{\Gamma(\lambda t)^2}{\lambda^2} a^2(\nabla E_u)^T\nabla E_u
\\ & \qquad  + \frac{2\Gamma(\lambda t)\bar\Gamma(\lambda
  t)}{\lambda^2}\sym\big(\nabla(aE_u)^T \nabla(a^2T_u\eta) \big) 
+ \frac{2\Gamma(\lambda t)\dbar\Gamma'(\lambda t)}{\lambda^2}a\;\sym\big(\nabla(aE_u)^T
T_u(\nabla a\otimes \eta) \big)
\\ & \qquad + \frac{\bar\Gamma(\lambda t)^2}{\lambda^2}\nabla(a^2T_u\eta)^T \nabla(a^2T_u\eta)
+\frac{2\bar\Gamma(\lambda t)\dbar\Gamma'(\lambda
  t)}{\lambda^2}a\;\sym\big(\nabla(a^2T_u\eta)^T T_u(\nabla a\otimes \eta) \big) 
\\ & \qquad + \frac{\dbar\Gamma'(\lambda t)^2}{\lambda^2}a^2\; |T_u\nabla a|^2 \eta\otimes\eta,\\
& \mathcal{R}_3 = \frac{2\bar\Gamma(\lambda t)\dbar\Gamma(\lambda
  t)}{\lambda^3}\sym\big(\nabla(a^2T_u\eta)^T \nabla(aT_u\nabla a) \big)
+ \frac{2\Gamma(\lambda t)\dbar\Gamma(\lambda t)}{\lambda^3}\sym\big(\nabla(a E_u)^T
\nabla(aT_u\nabla a) \big)
\\ & \qquad +\frac{2\dbar\Gamma(\lambda t)\dbar\Gamma'(\lambda
  t)}{\lambda^3}a\;\sym\big(\nabla(aT_u\nabla a)^TT_u(\nabla a\otimes \eta) \big)
\\ &  \mathcal{R}_4 =
\frac{\dbar\Gamma(\lambda t)^2}{\lambda^4} \nabla(aT_u\nabla a)^T \nabla(aT_u\nabla a). 
\end{split}
\end{equation*}
\end{lemma}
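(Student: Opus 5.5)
This identity is purely algebraic, so the plan is a direct expansion. Write $\tilde u = u + w$ with
$$w = \frac{\Gamma}{\lambda} aE_u + \frac{\bar\Gamma}{\lambda}a^2T_u\eta + \frac{\dbar\Gamma}{\lambda^2}aT_u\nabla a,$$
where $\Gamma,\bar\Gamma,\dbar\Gamma$ are evaluated at $\lambda t$. Since
$$(\nabla\tilde u)^T\nabla\tilde u-(\nabla u)^T\nabla u = 2\,\sym\big((\nabla u)^T\nabla w\big) + (\nabla w)^T\nabla w,$$
we compute $\nabla w$ by the product rule. Each term $\frac{\Gamma(\lambda t)}{\lambda^j}V$ has gradient
$$\frac{\Gamma'(\lambda t)}{\lambda^{j-1}}V\otimes\eta + \frac{\Gamma(\lambda t)}{\lambda^j}\nabla V,$$
and likewise for $\bar\Gamma$ and $\dbar\Gamma$. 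This gives six pieces in total: three ``fast'' ones of the form $V\otimes\eta$ and three ``slow'' ones of the form $\nabla V$.

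\textbf{Linear term.} The three identities we need are:
\begin{itemize}
\item $(\nabla u)^T E_u=0$;
\item $(\nabla u)^T T_u=\Id_d$;
\item differentiating the first one gives $(\nabla u)^T\nabla(aE_u) = -a[\langle\partial^2_{ij}u,E_u\rangle]$, and similarly $(\nabla u)^T\nabla(T_uv)=\nabla v - [\langle\partial^2_{ij}u, T_uv\rangle]$ by differentiating $(\nabla u)^T T_u v = v$.
\end{itemize}
With these, the fast part of $2\,\sym((\nabla u)^T\nabla w)$ equals $2\bar\Gamma' a^2\eta\otimes\eta + 2\dbar\Gamma'\lambda^{-1}a\,\sym(\nabla a\otimes\eta)$. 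The slow part produces:
\begin{itemize}
\item the term $-\frac{2\Gamma}{\lambda}a[\langle\partial^2_{ij}u,E_u\rangle]$;
\item the $\mathcal R_1$ and $\mathcal R_2$ terms containing $\partial^2_{ij}u$;
\item the symmetrized gradients $\frac{2\bar\Gamma}{\lambda}\sym\nabla(a^2\eta)=\frac{2\bar\Gamma}{\lambda}\,2a\,\sym(\nabla a\otimes\eta)$ and $\frac{2\dbar\Gamma}{\lambda^2}\sym\nabla(a\nabla a)$.
\end{itemize}
Since $\dbar\Gamma' = -2\bar\Gamma$, the $\lambda^{-1}$ terms $\sym(\nabla a\otimes\eta)$ cancel exactly. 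This is the ``balanced'' design the paper alludes to, in contrast to \cite{CHI2}. The remaining piece $\frac{2\dbar\Gamma}{\lambda^2}(a\nabla^2 a+\nabla a\otimes\nabla a)$ supplies the $a\nabla^2a$ term in \eqref{sisi} plus part of the $\nabla a\otimes\nabla a$ coefficient.

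\textbf{Quadratic term.} We expand $(\nabla w)^T\nabla w$ as the sum of all products of the six pieces and use $|E_u|=1$, $E_u\perp T_u$. The fast--fast products give:
\begin{itemize}
\item $2\cos^2(\lambda t)\,a^2\eta\otimes\eta$ from $|\Gamma'|^2$;
\item $\bar\Gamma'^2a^4|T_u\eta|^2\eta\otimes\eta$, which is $\mathcal R_0$;
\item the cross terms $\bar\Gamma'\dbar\Gamma'$ and $\dbar\Gamma'^2$, which are already listed in $\mathcal R_1,\mathcal R_2$.
\end{itemize}
Adding the $\Gamma'^2$ contribution to the linear one, $2\bar\Gamma' a^2 = -\cos(2\lambda t)a^2$, yields $(2\cos^2(\lambda t) -\cos(2\lambda t))a^2\eta\otimes\eta = a^2\eta\otimes\eta$. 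This cancels the subtracted $a^2\eta\otimes\eta$; it is exactly why $\Gamma=\sqrt2\sin$ and $\bar\Gamma=-\frac14\sin(2\cdot)$ were chosen.

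The fast-times-slow products of $\Gamma'\,aE_u\otimes\eta$ with $\nabla(aE_u)$ give $\frac{2\Gamma\Gamma'}{\lambda}a\,\sym((\nabla a)\otimes\eta)$, using $E_u^T\nabla E_u=0$ since $|E_u|=1$. This combines with the leftover $\frac{2\Gamma\Gamma'}{\lambda}$ contribution, and with $\Gamma\Gamma'=\sin(2\cdot)=-4\bar\Gamma$, to cancel against the linear-term remnant. Next, $\Gamma'^2$ paired with $\dbar\Gamma$ terms, together with $\frac{\Gamma^2}{\lambda^2}|\nabla a|^2$-type terms from $\nabla(aE_u)^T\nabla(aE_u)= \nabla a\otimes\nabla a + a^2(\nabla E_u)^T\nabla E_u$, adds up to the coefficient $\tbar\Gamma = 2\dbar\Gamma+\Gamma^2$ of $\nabla a\otimes\nabla a$, since $2\cdot\frac14\cos 2t + 2\sin^2 t = 1-\frac12\cos 2t$. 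All remaining products are sorted by powers of $\lambda^{-1}$ into $\mathcal R_1,\dots,\mathcal R_4$ and matched with the stated list.

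\textbf{Main obstacle.} The hard part is the bookkeeping of the $\lambda^{-1}$ and $\lambda^{-2}$ terms: one must verify that every non-listed term of these orders cancels through the trigonometric relations $\dbar\Gamma'=-2\bar\Gamma$, $\Gamma\Gamma'=-4\bar\Gamma$ and $\tbar\Gamma=2\dbar\Gamma+\Gamma^2$, together with the orthogonality relations above. I would organize the check by first collecting all terms proportional to $\sym(\nabla a\otimes\eta)$ and $\nabla a\otimes\nabla a$ separately.
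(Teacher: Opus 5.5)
Your route is the same as the paper's: expand $\nabla\tilde u$ by the product rule, use $(\nabla u)^TE_u=0$, $(\nabla u)^TT_u=\Id_d$, $(\nabla E_u)^TE_u=0$ and $E_u^TT_u=0$, and close with trigonometric identities. Most of your bookkeeping checks out: the $a^2\eta\otimes\eta$ cancellation via $2\bar\Gamma'+(\Gamma')^2=1$, the coefficient $\tbar\Gamma=2\dbar\Gamma+\Gamma^2$ of $\nabla a\otimes\nabla a$, and the sorting of the remaining products into $\mathcal R_0,\dots,\mathcal R_4$.

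The order-$\lambda^{-1}$ cancellation, however, which is the crux of the lemma, is stated incorrectly. The identity $\dbar\Gamma'=-2\bar\Gamma$ is false: in fact $\dbar\Gamma'(t)=-\frac12\sin(2t)=2\bar\Gamma(t)$. Hence the terms $\frac{1}{\lambda}a\,\sym(\nabla a\otimes\eta)$ in $2\,\sym((\nabla u)^T\nabla w)$ do not cancel among themselves. They add up to
\[
\frac{2\dbar\Gamma'+4\bar\Gamma}{\lambda}\,a\,\sym(\nabla a\otimes\eta)=\frac{8\bar\Gamma}{\lambda}\,a\,\sym(\nabla a\otimes\eta)=-\frac{2\sin(2\lambda t)}{\lambda}\,a\,\sym(\nabla a\otimes\eta).
\]
This remnant is cancelled only by the product of $\Gamma'aE_u\otimes\eta$ with $\frac{\Gamma}{\lambda}\nabla(aE_u)$ in $(\nabla w)^T\nabla w$. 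That product equals $\frac{2\Gamma\Gamma'}{\lambda}a\,\sym(\nabla a\otimes\eta)=\frac{2\sin(2\lambda t)}{\lambda}a\,\sym(\nabla a\otimes\eta)$. So what is needed is the three-term identity $4\bar\Gamma+2\Gamma\Gamma'+2\dbar\Gamma'=0$, which is exactly the one the paper uses.

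As written, your two claims contradict each other. With your identities $\dbar\Gamma'=-2\bar\Gamma$ and $\Gamma\Gamma'=-4\bar\Gamma$, the total coefficient would be $2\dbar\Gamma'+4\bar\Gamma+2\Gamma\Gamma'=-8\bar\Gamma\neq 0$. This would leave an unlisted term $\frac{2\sin(2\lambda t)}{\lambda}a\,\sym(\nabla a\otimes\eta)$ on the right-hand side of \eqref{sisi}. That is precisely the unbalanced $\lambda^{-1}a\nabla a\otimes\eta$ type of term the construction is designed to avoid.

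Your later sentence about cancelling ``against the linear-term remnant'' describes the correct mechanism. The repair is to fix the sign and drop the claim that the linear part cancels on its own.

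A smaller slip: no $(\Gamma')^2$ term contributes to $\nabla a\otimes\nabla a$. The coefficient $\tbar\Gamma$ comes from $\frac{2\dbar\Gamma}{\lambda^2}\nabla a\otimes\nabla a$ in the linear part and from $\frac{\Gamma^2}{\lambda^2}\nabla a\otimes\nabla a$ in the product of $\frac{\Gamma}{\lambda}\nabla(aE_u)$ with itself.
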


\smallskip

\noindent Finally, we collect several lemmas concerning propagation (and existence) of
the normal frame to the immersions constructed inductively in our convex
integration algorithm. The proofs, which follow the arguments of \cite[Section 3]{lew_full2d2k}
in the case $d=k=2$, are presented in section \ref{sec_appB}.

\begin{lemma}\label{lem_det}
Let $u\in\mathcal{C}^1(\bar\omega,\R^{d+k})$ defined on the closure of an
open, bounded $\omega\subset\R^d$, satisfy:
\begin{equation}\label{immers_gamma}
\frac{1}{\gamma}\Id_d\leq (\nabla u)^T\nabla u \leq \gamma\Id_d
\quad\mbox{ in } \bar\omega,
\end{equation}
for some $\gamma\geq 1$. Then there holds:
$$(d/\gamma)^{1/2}\leq |\nabla u|\leq (d\gamma)^{1/2} \quad\mbox{ and }\quad 
\frac{1}{\gamma^d}\leq\det((\nabla u)^T\nabla u)\leq \gamma^d\quad 
\mbox{ in } \, \bar\omega.$$
\end{lemma}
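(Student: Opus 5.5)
The plan is to treat this as a pointwise linear-algebra statement about the symmetric positive definite matrix $G=(\nabla u)^T\nabla u(x)$, at a fixed $x\in\bar\omega$, and then use its eigenvalues. Let $0<\mu_1\leq\ldots\leq\mu_d$ be the eigenvalues of $G$. The hypothesis (\ref{immers_gamma}) says precisely that $\frac{1}{\gamma}\leq \mu_i\leq\gamma$ for every $i=1\ldots d$. To see this, test the quadratic form on the unit eigenvectors $v_i$: we get $\langle Gv_i,v_i\rangle=\mu_i$, which is bounded between $\frac1\gamma|v_i|^2$ and $\gamma|v_i|^2$.

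For the first claim, I will use the Frobenius norm identity
$$|\nabla u|^2=\mathrm{tr}\big((\nabla u)^T\nabla u\big)=\sum_{i=1}^d\mu_i .$$
Summing the $d$ eigenvalue bounds gives $\frac{d}{\gamma}\leq|\nabla u|^2\leq d\gamma$. Taking square roots yields $(d/\gamma)^{1/2}\leq|\nabla u|\leq(d\gamma)^{1/2}$.

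For the second claim, I write
$$\det\big((\nabla u)^T\nabla u\big)=\prod_{i=1}^d\mu_i .$$
Multiplying the eigenvalue bounds, which is legitimate since all the quantities involved are positive, gives $\gamma^{-d}\leq\det\leq\gamma^d$.

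Both estimates hold at every point of $\bar\omega$, which completes the argument. There is no real obstacle here. The only point to record is that the norm $|\cdot|$ is the Frobenius norm, so that the trace identity applies. If instead the operator norm were meant, the bound $|\nabla u|\leq\gamma^{1/2}\leq(d\gamma)^{1/2}$ would still follow, and the lower bound would read $\gamma^{-1/2}$. I would therefore fix the Frobenius convention at the start.
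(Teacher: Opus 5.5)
Your proof is correct and essentially matches the paper's; in particular, the determinant bound is obtained exactly as you do, from the eigenvalues of $(\nabla u)^T\nabla u$ lying in $[1/\gamma,\gamma]$. For the norm bound, the paper tests the quadratic form on the coordinate vectors $e_i$ to get $|\partial_i u|^2\in[1/\gamma,\gamma]$ and then sums; this is the same trace computation as your sum of eigenvalues and confirms the Frobenius convention you adopt.
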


\begin{lemma}\label{lem_Tu}
Given an immersion $u\in\mathcal{C}^{n+1}(\bar\omega,\R^{n+k})$ posed on the closure of an
open, bounded set $\omega\subset\R^d$ with  (\ref{immers_gamma}) valid
for some immersability constant $\gamma\geq 1$, consider the tangent
field $T_u=(\nabla u)\big((\nabla u)^T\nabla 
u\big)^{-1}\in\mathcal{C}^{n}(\bar\omega,\R^{(d+k)\times d})$. Then there holds:
\begin{itemize}
\item[(i)] $\|T_u\|_0\leq C$ and $\displaystyle{\|\nabla^{(m)}T_u\|_0\leq C
\hspace{-4mm} \sum_{p_1+2p_2+\ldots mp_m=m} \; \prod_{i=1}^m\|\nabla^{(i)}\nabla u\|_0^{p_i}}$
for all $ m=1\ldots n$, where $C$ depends only on $\gamma, d, k, n$.
\item[(ii)] If $n\geq 1$ and with some constants $\mu>1$, $A<1$ and
  $\bar C>1$, we additionally have:
$$\|\nabla^{(m)}\nabla u\|_0\leq \bar C \mu^{m}A \quad \mbox{ for all } \; m=1\ldots n,$$
then there holds:
$$\|\nabla^{(m)}T_u\|_0\leq C\mu^m A \quad\mbox{ for all } m=1\ldots n,$$
where $C$ depends only on $\gamma, \bar C, d, k, n$, but not on $\mu, A$.
\end{itemize}  
\end{lemma}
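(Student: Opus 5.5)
The plan is to write $G\doteq(\nabla u)^T\nabla u\in\mathcal{C}^n(\bar\omega,\R^{d\times d}_{\sym,>})$, so that $T_u=(\nabla u)G^{-1}$, and to estimate the two factors separately before combining them with the Leibniz rule. For the zeroth-order bound, (\ref{immers_gamma}) gives $|G^{-1}|\leq C(\gamma,d)$, since the eigenvalues of $G^{-1}$ lie in $[1/\gamma,\gamma]$. Lemma \ref{lem_det} gives $|\nabla u|\leq (d\gamma)^{1/2}$. Hence $\|T_u\|_0\leq C$.

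For the derivatives in (i), differentiate $G^{-1}$ using the identity $\partial(G^{-1})=-G^{-1}(\partial G)G^{-1}$, iterated. By induction on $m$, $\nabla^{(m)}(G^{-1})$ is a finite sum of terms $G^{-1}(\nabla^{(j_1)}G)G^{-1}\cdots(\nabla^{(j_s)}G)G^{-1}$ with $j_1+\ldots+j_s=m$ and $j_i\geq1$. The derivatives of $G$ are expanded by Leibniz: $\nabla^{(j)}G$ is a sum of products $\nabla^{(a)}\nabla u\otimes\nabla^{(b)}\nabla u$ with $a+b=j$. The terms with $a=0$ or $b=0$ contribute a factor $|\nabla u|\leq C$ times $\nabla^{(j)}\nabla u$.

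Consequently, $\nabla^{(m)}T_u=\sum\binom{m}{j}\nabla^{(j)}\nabla u\,\nabla^{(m-j)}(G^{-1})$ is bounded by a constant times a sum of products $\prod_\ell\|\nabla^{(i_\ell)}\nabla u\|_0$. Here each factor with $i_\ell\geq1$ carries weight $i_\ell$, factors with $i_\ell=0$ are bounded by $C$, and the total weight is exactly $m$, since every derivative lands on exactly one factor. Grouping the factors by their order $i$ gives exponents $p_i$ with $\sum ip_i=m$, which is the claimed form. The constant depends only on $\gamma,d,k,n$ through $\|G^{-1}\|_0$, $\|\nabla u\|_0$ and combinatorial coefficients. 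I view this weight-counting bookkeeping as the only real point of care. I will phrase it as the standard inductive claim that each derivative of a monomial produces monomials of weight increased by one.

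For (ii), insert the assumed bounds into (i). A monomial with exponents $p_i$, $\sum ip_i=m$, is bounded by
$$\prod_i(\bar C\mu^iA)^{p_i}=\bar C^{\sum p_i}\mu^{m}A^{\sum p_i}.$$
Since $m\geq1$, at least one $p_i\geq1$, so $\sum p_i\geq1$. Because $A<1$, we get $A^{\sum p_i}\leq A$. Also $\sum p_i\leq m\leq n$, so $\bar C^{\sum p_i}\leq \bar C^n$. The number of monomials depends only on $n$. Summing therefore yields $\|\nabla^{(m)}T_u\|_0\leq C\mu^mA$, with $C$ depending on $\gamma,\bar C,d,k,n$ only.
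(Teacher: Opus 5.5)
Your proof is correct and is essentially the paper's argument. The paper gets (i) by applying Fa\`a di Bruno's formula to $T_u=f\circ\nabla u$, where $f(\xi)=\xi(\xi^T\xi)^{-1}$ is smooth with bounded derivatives on the range of $\nabla u$ (by the cofactor formula and Lemma \ref{lem_det}); you unroll that chain rule by hand via $\partial(G^{-1})=-G^{-1}(\partial G)G^{-1}$ and Leibniz, reaching the same weighted monomials, and your part (ii) is identical to the paper's.
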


\medskip

\noindent The following result provides the key construction and estimates
on the propagation of normal frame from a given, to a nearby
immersion. The same construction, but with less precise bounds 
appeared in \cite{DI2020}[Proposition 5.3], while the same result at
$d=k=2$ appeared in \cite{lew_full2d2k}. 

\begin{lemma}\label{lem_propa}
Let $u\in\mathcal{C}^{n+1}(\bar\omega, \R^{d+k})$ be defined on the closure of an
open, bounded set $\omega\subset\R^d$, and satisfy (\ref{immers_gamma}) for
some $\gamma\ge 1$. Let $\{E_u^i\in\mathcal{C}^{n}(\bar\omega, \R^{d+k})\}_{i=1}^k$
be a normal frame of $u$, namely:
\begin{equation}\label{norm_frame}
(\nabla u)^T E_u^i = 0,\quad |E^i_u|=1,
\quad \langle E^i_u, E^j_u\rangle = 0 \quad\mbox{ in } \;\bar\omega,
\quad \mbox{ for all }\, i,j=1\ldots k, \; i\neq j.
\end{equation}
There exists a constant $\rho_\gamma\in (0,1)$ depending only on
$\gamma, d$, such that for every $v\in\mathcal{C}^{n+1}(\bar\omega, \R^{d+k})$, satisfying
$\|\nabla v-\nabla u\|_0\leq \rho_\gamma$, there holds:
\begin{itemize}
\item[(i)] $v$ is an immersion and its normal frame
$\{E_v^i\in\mathcal{C}^{n}(\bar\omega, \R^{d+k})\}_{i=1}^k$  namely: 
$$(\nabla v)^T E_v^i = 0,\quad |E^i_v|=1,
\quad \langle E^i_v, E^j_v\rangle = 0 \quad\mbox{ in } \;\bar\omega,
\quad \mbox{ for all }\, i,j=1\ldots k, \; i\neq j,$$ 
can be defined via the following formulas:
\begin{equation}\label{propa_def} 
\begin{split}
& E^i_v = \frac{\nu^i_v - \sum_{j<i}\langle \nu_v^i, E^j_v\rangle
  E^j_v}{|\nu^i_v - \sum_{j<i}\langle \nu_v^i, E^j_v\rangle E^j_v|}
\qquad \mbox{ for all }\; 1=1\ldots k,\\ 
& \mbox{where } \nu_v^i= \big(\Id_{d+k} - T_v(\nabla v - \nabla u)^T\big)E^i_u
\quad\mbox{for all } i=1\ldots k \\ & \mbox{and} \quad T_v= (\nabla v)((\nabla v)^T\nabla v)^{-1}.
\end{split}
\end{equation}
\item[(ii)] For all $m=0\ldots n$ and $i=1\ldots k$ there holds, with
  constants $C$ depending on $\gamma, d, k, n$:
\begin{equation*}
\begin{split}    
& \|\nabla^{(m)}(E^i_v-E^i_u)\|_0\\ & \leq C \hspace{-7mm}\sum_{p+\sum_{s=1}^ms(p_s+\bar
  p_s + \sum_{j\leq i}\dbar p_{j,s})=m\hspace{-21mm}} \hspace{-7mm} \|\nabla^{(p)}(\nabla v-\nabla u)\|_0
\prod_{s=1}^m \|\nabla^{(s)}(\nabla v - \nabla u)\|_0^{p_s} \|\nabla^{(s)}\nabla v\|_0^{\bar p_s}
\prod_{j\leq i}\|\nabla^{(s)}E^j_u\|_0^{\dbar p_{j,s}}.
\end{split}
\end{equation*}
\item[(iii)]  If $n\geq 1$ and with some constants $\mu>1, A<1$ and
  $\bar C>1$ we additionally have:
\begin{equation*}
\begin{split}
& \|\nabla^{(m)}(\nabla v-\nabla u)\|_0\leq \bar C \mu^mA \quad\mbox{ for } m=0\ldots n,
\\ & \|\nabla^{(m)}\nabla u\|_0\leq \bar C\mu^{m}, ~~ \;\;
\|\nabla^{(m)}E^i_u\|_0\leq \bar C\mu^{m}  \quad\mbox{for } m=1\ldots n,\; i=1 \ldots k,
\end{split}
\end{equation*}
then there holds:
$$\|\nabla^{(m)}(E^i_v-E^i_u)\|_0\leq C \mu^mA \quad\mbox{ for all }
m=0\ldots n,\; i=1\ldots k,$$
where $C$ depends only on $\gamma, \bar C, d, k, n$, but not on $\mu, A$.
\end{itemize}
\end{lemma}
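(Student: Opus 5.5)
We will prove Lemma \ref{lem_propa} by a perturbative construction: first show that $\nu^i_v$ is normal to $v$ and close to $E^i_u$, then apply Gram--Schmidt, and finally bound the derivatives by the Leibniz and Fa\`a di Bruno formulas.

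\emph{Normality and smallness.} Since $(\nabla v)^TT_v=\Id_d$, we get
$$(\nabla v)^T\nu^i_v=(\nabla v)^TE^i_u-(\nabla v-\nabla u)^TE^i_u=(\nabla u)^TE^i_u=0 .$$
Next, choose $\rho_\gamma$ so small that $\|\nabla v-\nabla u\|_0\le\rho_\gamma$ gives $\frac{1}{2\gamma}\Id_d\le(\nabla v)^T\nabla v\le 2\gamma\Id_d$. Hence $v$ is an immersion with immersability constant $2\gamma$, and Lemma \ref{lem_det} together with Lemma \ref{lem_Tu} give $|T_v|\le C(\gamma,d)$. It follows that $|\nu^i_v-E^i_u|\le C\rho_\gamma$. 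We then argue by induction on $i$ that $|E^j_v-E^j_u|\le C\rho_\gamma$ for all $j<i$. Since $\langle E^i_u,E^j_u\rangle=0$, this yields
$$\Big|\nu^i_v-\sum_{j<i}\langle\nu^i_v,E^j_v\rangle E^j_v-E^i_u\Big|\le C\rho_\gamma .$$
For $\rho_\gamma$ small the denominator in (\ref{propa_def}) is then at least $1/2$, so $E^i_v$ is well defined and again satisfies $|E^i_v-E^i_u|\le C\rho_\gamma$. By construction the family $\{E^i_v\}$ is orthonormal, and it is normal to $v$ because it is built from normal vectors. This proves (i).

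\emph{Derivative bounds (ii).} We write the difference as
$$\nu^i_v-E^i_u=-T_v(\nabla v-\nabla u)^TE^i_u .$$
Differentiating $m$ times and distributing derivatives by Leibniz produces three kinds of factors:
\begin{itemize}
\item $\nabla^{(s)}T_v$, which Lemma \ref{lem_Tu}(i) bounds by products of $\|\nabla^{(s)}\nabla v\|_0$ (the $\bar p_s$ terms);
\item $\nabla^{(p)}(\nabla v-\nabla u)$, of which exactly one factor is always present, since it is linear in the difference;
\item $\nabla^{(s)}E^i_u$ (the $\dbar p$ terms).
\end{itemize}
For the Gram--Schmidt part we write
$$E^i_v-E^i_u=\Phi\Big(\nu^i_v-E^i_u,\ \{E^j_v-E^j_u\}_{j<i},\ \{E^j_u\}_{j\le i}\Big),$$
where $\Phi$ is smooth on the bounded region where the denominator is at least $1/2$, and $\Phi$ vanishes when all difference arguments vanish. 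It is therefore a sum of terms, each linear in one difference, with smooth coefficients. Applying the chain rule (Fa\`a di Bruno) and the inductive estimates for $j<i$ gives sums of products of the stated form. The additional factors $\|\nabla^{(s)}(\nabla v-\nabla u)\|_0^{p_s}$ arise when derivatives hit the difference variables inside $\Phi$'s coefficients. Throughout, the homogeneity constraint $p+\sum_s s(\cdot)=m$ is preserved because each derivative raises exactly one index.

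\emph{Scaling (iii).} We substitute $\|\nabla^{(s)}(\nabla v-\nabla u)\|_0\le\bar C\mu^sA$, together with $\|\nabla^{(s)}\nabla v\|_0\le\|\nabla^{(s)}\nabla u\|_0+\bar C\mu^sA\le C\mu^s$ and $\|\nabla^{(s)}E^j_u\|_0\le\bar C\mu^s$, into (ii). Each product is then bounded by $C\mu^{p}A\cdot\mu^{m-p}A^{\sum p_s}\le C\mu^mA$, using $A<1$.

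The main obstacle is the bookkeeping in (ii): we must ensure that every term contains at least one factor of the difference $\nabla v-\nabla u$, and that the total derivative count is exactly $m$. This is best handled by proving the estimate inductively in $i$, with a general lemma on derivatives of smooth functions of several fields that vanish on one of them.
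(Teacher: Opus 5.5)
Your proposal is correct and follows essentially the same route as the paper: normality via $(\nabla v)^TT_v=\Id_d$, Leibniz on $\nu^i_v-E^i_u=-T_v(\nabla v-\nabla u)^TE^i_u$ together with Lemma \ref{lem_Tu}, induction on $i$ with Fa\`a di Bruno for the Gram--Schmidt normalization, and direct substitution for (iii). The only difference is that the paper makes your map $\Phi$ explicit, via the identity $\langle\nu^{i}_v,E^j_v\rangle=\langle E^{i}_u,E^j_v-E^j_u\rangle$ and the mean-value formula for $f(z)=z/|z|$. That identity also makes precise your claim that $\Phi$ vanishes at zero differences. This claim relies on the orthonormality of $\{E^j_u\}$, so you should linearize $\Phi(a,b,e)-\Phi(0,0,e)$ with $e$ held fixed.
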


\medskip

\noindent Equipped with Lemma \ref{lem_propa}, one can deduce the existence of a normal frame to any
$\mathcal{C}^1$ immersion $u$, with gradient bounds whose constants
depend on $\gamma$ and on the
modulus of continuity of $\nabla u$. The mere existence of a normal frame was
essentially proved in \cite[Lemma 2.4.1]{Del_masterpieces}. We have:

\begin{corollary}\label{lem_normals}
Let $u\in\mathcal{C}^{n+1}(\bar\omega, \R^{d+k})$ be an immersion defined on the closure
of the set $\omega\subset\R^d$ diffeomorphic to $B_1$, such that it satisfies
(\ref{immers_gamma}) with some $\gamma\geq 1$. Then, there
exist a normal frame $\{E^i_u\in \mathcal{C}^n(\bar\omega, \R^{d+k})\}_{i=1}^k$
to $u$, as in (\ref{norm_frame}), obeying the bounds:
\begin{equation}\label{stim_nor}
\|\nabla^{(m)}E_u^i\|_0\leq C \|\nabla u\|_m\quad\mbox{
  for all } \; m=1\ldots n, \; i=1\ldots k,
\end{equation}
where $C$ depends only on $\omega, \gamma, d, k, n$, and the modulus of
continuity of $\nabla u$ on $\bar\omega$.
\end{corollary}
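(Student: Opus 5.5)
We build the frame locally by propagating from a frozen linear immersion, and then glue the local frames along a chain of overlapping balls using Lemma \ref{lem_propa}.

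\textbf{Step 1: local frames.} Let $\varpi$ be the modulus of continuity of $\nabla u$ on $\bar\omega$. Since $\omega$ is diffeomorphic to $B_1$, first pull everything back to $\bar B_1$ by the fixed diffeomorphism. This changes constants only through $\omega$, and $\gamma$ changes by a factor depending on $\omega$. We may therefore assume $\omega=B_1$.

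Choose $r>0$, depending only on $\varpi$ and on $\rho_\gamma$ from Lemma \ref{lem_propa}, such that $\varpi(2r)\leq \rho_\gamma/2$. For any ball $B=B(x_0,2r)$, consider the affine map
$$u_0(x)=u(x_0)+\nabla u(x_0)(x-x_0).$$
It satisfies (\ref{immers_gamma}) with the same $\gamma$. Its normal frame can be taken constant: any orthonormal basis $\{E^i_{u_0}\}_{i=1}^k$ of the orthogonal complement of the range of $\nabla u(x_0)$.

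On $B$ we have $\|\nabla u-\nabla u_0\|_0\leq\rho_\gamma$. Lemma \ref{lem_propa}(i) therefore yields a normal frame $E^i_u$ on $B$, given by the explicit formula (\ref{propa_def}). Part (ii) of that lemma, with $\nabla^{(s)}\nabla u_0=0$ and $\nabla^{(s)}E_{u_0}=0$ for $s\geq1$, bounds $\|\nabla^{(m)}E^i_u\|_{0,B}$ by a sum of products of the form $\prod_s\|\nabla^{(s)}\nabla u\|_0^{q_s}$ with $\sum_s s\,q_s=m$. By Gagliardo–Nirenberg interpolation on the ball, $\|\nabla u\|_s\leq C\|\nabla u\|_0^{1-s/m}\|\nabla u\|_m^{s/m}$, and $\|\nabla u\|_0\leq (d\gamma)^{1/2}$ by Lemma \ref{lem_det}. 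Each such product is thus bounded by $C\|\nabla u\|_m$, with $C$ depending on $\gamma,d,k,n$ and on $r$, hence on $\varpi$.

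\textbf{Step 2: gluing.} The main obstacle is passing from local frames to a global one, since the normal bundle must be trivialized coherently. I would try to use contractibility of $B_1$ directly, via a radial sweep.

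Cover $\bar B_1$ by concentric shells $\{r_{j}\le |x|\le r_{j+1}\}$ with $r_{j+1}-r_j<r$; the number of shells is $\sim 1/r$. Proceed inductively outward. Given a frame on $B(0,r_j)$, extend it across the next shell by propagation along rays: for $x$ in the shell, use the formula (\ref{propa_def}) with base immersion the frozen linearization of $u$ at the point $r_j x/|x|$. Since the base point changes with $x$, this introduces the derivatives of the map $x\mapsto \nabla u(r_jx/|x|)$ as base data. Those derivatives are controlled by $\|\nabla u\|_m$, up to $C(r)$ factors coming from the radial projection. Then blend this extension with the existing frame, which is close by, using a partition of unity in $|x|$, and re-orthonormalize by Gram–Schmidt after projecting onto the normal space via $\Id-\nabla u\,T_u^T$.

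Closeness of the two frames in the overlap is guaranteed by Lemma \ref{lem_propa}(ii) at $m=0$: both are within $C\rho_\gamma$ of a common frozen frame. Taking $\rho_\gamma$ small keeps the blended fields nondegenerate, so the projection and Gram–Schmidt are well defined. The projection, normalization and Gram–Schmidt operations are smooth in their arguments on this nondegenerate set. Their derivatives are therefore controlled by Fa\`a di Bruno together with the same interpolation inequality, giving again $C\|\nabla u\|_m$, as in Lemma \ref{lem_Tu}(i).

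After the $\sim1/r$ induction steps, all constants depend only on $\omega,\gamma,d,k,n$ and $\varpi$. This yields (\ref{stim_nor}). The $\mathcal{C}^n$ regularity of the resulting frame follows from $u\in\mathcal{C}^{n+1}$, since every operation above loses exactly one derivative relative to $u$.
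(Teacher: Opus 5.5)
Your argument is sound in outline, but it takes a heavier route than the paper.

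\textbf{The paper's route.} It never glues local frames. After reducing to $B_1$, it takes the rescaled immersions $u_j(x)=\frac{N}{j}u(\frac{jx}{N})$, $j=1,\ldots,N$, together with the linear map $u_0(x)=\nabla u(0)x$. Since $\nabla u_j(x)=\nabla u(jx/N)$, one has $\|\nabla u_{j+1}-\nabla u_j\|_0\le\varpi(1/N)\le\rho_\gamma$ on all of $\bar B_1$, and also $\|\nabla u_j\|_m\le\|\nabla u\|_m$. So Lemma \ref{lem_propa} is applied $N$ times, each time on the whole ball, starting from the constant frame of $u_0$. The interpolated form of Lemma \ref{lem_propa}(ii) bounds each increment $\|\nabla^{(m)}(E^i_{u_{j+1}}-E^i_{u_j})\|_0$ by $C\|\nabla u\|_m$, and telescoping finishes the proof, with $N$ depending only on $\varpi$ and $\gamma$.

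\textbf{Comparison.} Geometrically this is the same radial transport you have in mind. The frame $E_u(x)$ is obtained by successively projecting a constant frame onto the normal spaces of $u$ at the points $jx/N$ and re-orthonormalizing. Because these sample points depend linearly on $x$, there are no seams. Your choice to sample on fixed spheres $|y|=r_j$ is what creates a mismatch of derivatives across those spheres. That mismatch forces the partition-of-unity blending, the re-projection, the nondegeneracy check, and the derivatives of $x\mapsto r_jx/|x|$. These only inflate constants by $r$-dependent factors, so they are harmless.

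\textbf{Points needing care in your construction.} First, the radial extension must be defined on a genuine overlap with the old frame, for instance by basing it on a slightly smaller sphere. As written, the old frame lives on $|x|\le r_j$ and the new one on $|x|\ge r_j$, so there is nothing to blend over. Second, the blending zones must avoid the next base sphere. Then in every overlap both frames are a single propagation away from a common frame at a point within distance $O(r)$, which is exactly your ``common frozen frame'' claim. If instead you bounded closeness through the modulus of continuity of the previously built frame, you would lose a multiplicative constant at each of the $\sim 1/r$ steps. That loss could not be absorbed by shrinking $r$.

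\textbf{What each approach buys.} Your local-to-global scheme would adapt to other covers of a contractible domain. The paper's scaling trick is shorter, because every application of Lemma \ref{lem_propa} is made on the whole ball.
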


\section{The stage construction and a proof of Theorem
  \ref{thm_STA}} \label{sec_khamsa}  

The proof consists of several steps in an inductive construction
below. By $C$ we always denote a constant that is larger than $1$ and
depends only on the referential quantities $\omega, \underline u,
\underline \gamma, \omega, d, k$, although it
may change from line to line. In general, we will use the following counters:
\begin{equation*}
\begin{split}
& i: 0\ldots N=lcm(d_*, k):\quad \mbox{consecutive induction counter} \\ &
s: 0\ldots S:\quad \mbox{ counter on multiplicities of } d_* \\ & 
j: 0\ldots J: \quad \mbox{ counter on multiplicities of } k\\ & 
\theta: 1\ldots k: \quad \mbox{ counter on codimensions } \\ &
\rho: 1\ldots d_*: \quad \mbox{counter on defect decomposition modes } \\ & 
m\geq 0: \hspace{9.7mm} \mbox{number of derivatives } \\ &
p,q: 1\ldots d: \hspace{2mm} \mbox{differentiation directions}.
\end{split}
\end{equation*}
Our induction procedure comprises one preparatory step (at counter
value $i=0$) in which the given quantities $u, g$ are mollified to
their smooth approximations $u_0, g_0$; and $N=lcm(d_*, k)$
subsequent steps (from the counter value $i=1$ to $i=N$), in which the
preceding immersion $u_{i-1}$ is adapted to a new immersion $u_{i}$.
Since each of these finitely many steps ``loses'' only finitely many derivatives between
those whose bounds are required at $i-1$ and those whose bounds are
achieved at $i$ -- as is the case, for example, in Lemmas  
\ref{lem_normals} and \ref{lem_propa} --  all constants $C$ appearing in
the proof depend only on finitely many derivatives of the initial
quantities $u_0, g_0$. To avoid cluttering the presentation and
obscuring the argument, we shall write statements such as ``for all $m\geq 0$''
or ``for all $m\geq 1$'', in place of the more precise ``for all $m=0\ldots K$'', 
instead ``for all $m=1\ldots K$''. Here, $K$ is always sufficiently
large and depends through a rather complicated formula on the current values of the counters
$i, s, j, \theta, \rho$, as well as on $d_*$ and $k$.

\medskip

\noindent {\bf Proof of Theorem \ref{thm_STA}}

{\bf 1. (Preparing the data)}  We will use the following assumptions
on $u\in\mathcal{C}^2(\bar\omega, \R^{d+k})$:
\begin{align*}
& \|\nabla u-\nabla \underline u\|_0\leq\frac{\rho_{\underline\gamma}}{2},
\tag*{(\theequation)$_1$} \refstepcounter{equation} \label{Asu2}\\
& \|\mathcal{D}(g-\delta H_0, u)\|_0\leq\frac{r_0}{4}\delta
\hspace{1.05cm}\mbox{and }\quad \|u\|_2\leq \lambda\delta^{1/2},
\tag*{(\theequation)$_2$} \label{Asu3}
\end{align*}
where $\underline \gamma>1$ is given, $r_0, H_0$ are as in Lemma \ref{lem_dec_def}, and
$\delta,\lambda, \sigma$ are any chosen parameters satisfying a
slightly weakened version of the assumption \ref{Ass1}:
\begin{equation}\label{Asu1}
0<\delta\leq \underline\delta,\qquad \lambda\delta^{1/2}\geq 1,
\qquad\sigma\geq \underline\sigma,\qquad \sigma^{J+2}\delta^{1/2}\leq 1.
\end{equation}
The full assumption \ref{Ass1} will be used in the final Step 7 of the proof.
The constants $\underline\delta<1$ and $\underline \sigma>1$ will be
chosen, respectively, sufficiently small and sufficiently large, in
the course of the arguments below, and will depend only on
$\omega,\underline u, \underline\gamma, d, k$ and $J,S$ that are defined in (\ref{def_JS})
(thus effectively also depending only on $d,k$). We set:
\begin{equation*}
\begin{split}
& \lambda_0=\lambda,\quad\delta_0=\delta,\quad l=\frac{1}{\bar C\lambda_0}, \\
& u_0 = u*\phi_l\in\mathcal{C}^\infty(\bar\omega, \R^{d+k}),\quad
g_0= g*\phi_l\in\mathcal{C}^\infty(\bar\omega, \R^{(d+k)\times(d+k)}_{\sym,>}),
\end{split}
\end{equation*}
for $\bar C>1$ that depends only on $d, k$ and is sufficiently large
to ensure that:
\begin{equation}\label{uahid}
\|(\nabla u_0)^T\nabla u_0 - ((\nabla u)^T\nabla u)*\phi_l\|_0\leq C
l^2\|\nabla^2u\|_0^2\leq \frac{C}{\bar
  C^2\lambda_0^2}\lambda_0^2\delta_0\leq \frac{r_0}{12}\delta_0,
\end{equation}
in view of \ref{stima4} and \ref{Asu3}. Further, Lemma \ref{lem_stima}
and \ref{Asu3} yield that:
\begin{equation}\label{ithnain}
\begin{split}
& \|g_0-g\|_0\leq l^{r+\beta}\|g\|_{r,\beta} \leq \frac{\|g\|_{r,\beta}}{\lambda_0^{r+\beta}}
\\ & \|u_0 - u\|_1\leq l\|u\|_2\leq l\lambda\delta^{1/2}\leq \delta_0^{1/2}
\\ & \|\nabla^{(m)}\nabla^{(2)}u_0\|_0\leq \frac{C}{l^m}\|\nabla^{(2)} u_0\|_0
\leq \frac{C}{l^m}\lambda\delta^{1/2}\leq C\lambda_0^{m+1}\delta^{1/2}
\quad\mbox{ for all } m\geq 0.
\end{split}
\end{equation}
In particular, recalling Lemma \ref{lem_det}, the second bound above implies:
\begin{equation*}
\begin{split}
\|(\nabla u_0)^T\nabla u_0 - (\nabla u)^T\nabla u\|_0 & \leq \|\nabla
(u_0-u)\|_0 \big(2 \|\nabla u\|_0 + \|\nabla (u_0-u)\|_0\big) \\ & \leq
\delta_0^{1/2}(2(2\underline \gamma d)^{1/2} + 1),
\end{split}
\end{equation*}
so taking $\delta_0$ sufficiently small (i.e. setting
$\underline\delta$ small enough) we ensure that $u_0$ is an immersion:
\begin{equation}\label{thalata}
\|\nabla u_0-\nabla\underline u\|_0\leq \rho_{\underline\gamma}\quad
\mbox{ and } \quad \frac{1}{3\underline\gamma}\Id_d\leq (\nabla u_0)^T\nabla u_0\leq
\, 3\underline\gamma\Id_d\; \mbox{ in } \;\bar\omega,
\end{equation}
By Corollary \ref{lem_normals} we obtain the normal frame
$\{E_{\underline u}^\theta\in\mathcal{C}^\infty(\bar\omega,
\R^{d+k})\}_{\theta=1}^k$ to $\underline u$. Then, Lemma \ref{lem_propa} yields existence
of a (sufficiently regular) normal frame
$\{E_0^\theta\doteq E^\theta_{u_0}\in \mathcal{C}(\bar\omega, \R^{d+k})\}_{\theta=1}^k$, namely:
$$ (\nabla u_0)^TE_0^\theta=0, \quad |E_0^\theta|=1,\quad \langle
E_0^\theta, E_0^{\bar\theta}\rangle =0 \quad \mbox{in } \bar\omega, \quad
\mbox{for all } \; \theta,\bar\theta=1\ldots k, \; \theta\neq\bar\theta,$$
so that the following bounds hold:
\begin{equation}\label{arbaa}
\begin{split}
& \|\nabla^{(m)}E_0^\theta\|_0\leq C (\|\nabla^{(m)}E_{\underline
  u}^\theta\|_0 +\|\nabla u_0\|_m +\|\nabla \underline u\|_m)
\\ & \hspace{1.85cm} \leq C \lambda_0^m\delta_0^{1/2}
\hspace{2.15cm} \mbox{ for all } \;\theta=1\ldots k,\; m\geq 1,
\\ & \|\nabla^{(m)}\langle \partial_{pq}^2u_0,
E_0^\theta\rangle\|_0\leq C\lambda_0^{m+1}\delta_0^{1/2} \quad \mbox{
  for all } \;\theta=1\ldots k, \;m\geq 0,
\end{split}
\end{equation}
where we used (\ref{ithnain}). Finally, writing:
$$\mathcal{D}(g_0-\delta_0H_0, u_0) = \mathcal{D}(g-\delta_0H_0, u)
*\phi_l - \big((\nabla u_0)^T\nabla u_0 - ((\nabla u)^T\nabla u)*\phi_l\big)$$ 
we estimate by \ref{stima1}, \ref{stima4}, \ref{Asu3}, for all
$m\geq 1$:
\begin{equation}\label{khamsa}
\begin{split}
& \|\nabla^{(m)}\Big(\mathcal{D}(g-\delta_0H_0, u)
*\phi_l \Big)\|_0\leq \frac{C}{l^m}\frac{r_0\delta_0}{4} = C \lambda_0^m\delta_0,
\\ & \|\nabla^{(m)}\big((\nabla u_0)^T\nabla u_0 - ((\nabla u)^T\nabla
u)*\phi_l\big)\|_0\leq C l^{2-m}\|\nabla^2u\|_0^2\leq C \lambda_0^m\delta_0,
\end{split}
\end{equation}
so that, in virtue of (\ref{uahid}) and \ref{Asu3} we get:
\begin{equation}\label{sitta}
\begin{split}
& \|\mathcal{D}(g_0-\delta_0H_0, u_0)
\|_0\leq \frac{r_0}{4}\delta_0 + \frac{r_0}{12}\delta_0= \frac{r_0}{3}\delta_0,
\\ & \|\nabla^{(m)}\mathcal{D}(g_0-\delta_0H_0, u_0)\|_0\leq 
C \lambda_0^m\delta_0 \quad \mbox{ for all } m\geq 1.
\end{split}
\end{equation}

\medskip

{\bf 2. (Setting up the induction)}  Starting with $u_0$, we will
inductively define the immersions:
$$\{u_i\in\mathcal{C}^1(\bar\omega, \R^{d+k})\}_{i=1}^N,\quad\mbox{
  where }\; N=lcm(d_*,k),$$
and their (sufficiently regular) normal frames $\{E_i^\theta\in\mathcal{C}(\bar\omega,
\R^{d+k})\}_{\theta=1}^k$, that is:
$$ (\nabla u_i)^TE_i^\theta=0, \quad |E_i^\theta|=1,\quad \langle
E_i^\theta, E_i^{\bar\theta}\rangle =0 \quad \mbox{in } \bar\omega \quad
\mbox{for all } \; \theta,\bar\theta=1\ldots k, \;
\theta\neq\bar\theta,\; i=1\ldots N.$$
The $N$-tuple of frequencies $\{\lambda_i\}_{i=1}^N$ is given by the formula:
$$\lambda_i = \lambda_{i-1}\cdot\left\{\begin{array}{ll}\sigma &
\mbox{if } k\mid i-1\\ \sigma^{1/2} & \mbox{if } d_*\mid i-1 \mbox{ and 
} i=2\ldots N\\ 1 & \mbox{otherwise.}
\end{array}\right.$$  
In particular, $\lambda_1=\lambda_0\sigma$ and, given $j=0\ldots J-1$,
$s=0\ldots S-1$ it follows that:
\begin{equation} \label{sabaa}
\lambda_i = \lambda_0\sigma^{1+j+s/2}\quad\mbox{ for all } i\in (jk,
(j+1)k]\cap (sd_*, (s+1)d_*]. 
\end{equation}
The consecutive defect measures $\{\delta_s\}_{s=1}^S$ are defined as
a decreasing $S$-tuple, through:
\begin{equation}\label{thamania}
\delta_s = \frac{\delta_0}{\sigma^s}\tilde C_s
\end{equation}  
with the help of an increasing $S$-tuple of constants $\tilde
C_1\ldots \tilde C_S$, all greater than $1$ and depending only on
$\omega, \underline u, \underline\gamma, d, k$.
In particular, setting $\tilde C_0=1$, we request that:
\begin{equation}\label{assu_CC}
\sigma\geq \frac{\tilde C_s}{\tilde C_{s-1}} \geq 1\quad\mbox{for all }\; s=1\ldots S,
\end{equation}
which can always be achieved by setting $\underline\sigma$ large enough.
To define the immersions $u_i$, we recall Lemma \ref{lem_step2} and the oscillatory
profiles $\Gamma$, $\bar\Gamma$, $\dbar\Gamma$ therein. First, for
$i=1\ldots N$ we uniquely write:
$$i=jk+\theta = sd_*+\rho \quad \mbox{ with } \; \theta=1\ldots k, \;
\rho = 1\ldots d_*$$
and then set, in an induction step from $i-1$ to $i$:
\begin{equation}\label{tisaa}
\begin{split}
& u_i = u_{i-1} + \frac{\Gamma(\lambda_i\langle x,\eta_\rho\rangle)}{\lambda_i}a_\rho^sE_{i-1}^\theta
+ \frac{\bar\Gamma(\lambda_i\langle x,\eta_\rho\rangle)}{\lambda_i}(a_\rho^s)^2T_{i-1}\eta_\rho
+ \frac{\dbar\Gamma(\lambda_i\langle
  x,\eta_\rho\rangle)}{\lambda_i^2}a_\rho^sT_{i-1}\nabla a_\rho^s
\\ & \mbox{where: } T_{i}=(\nabla u_i)\big((\nabla u_i)^T\nabla u_i\big)^{-1}.
\end{split}  
\end{equation}
The maps $a_\rho^s\in\mathcal{C}^1(\bar\omega)$ for $s=0\ldots S-1$,
$\rho=1\ldots d_*$ will be defined in Step 3 through an application of
Lemma \ref{lem_dec_def} to the defect
$\mathcal{D}(g_0-\delta_sH_0,u_{sd_*})$. The unit vector $\eta_\rho\in\R^d$ is
as in Lemma \ref{lem_dec_def}. In Steps 3-6 below we will prove that
for all $i=1\ldots N$ there holds: 
\begin{align*}
& \frac{1}{3\underline\gamma}\Id_d\leq (\nabla u_i)^T\nabla u_i\leq
3\underline\gamma\Id_d\quad\mbox{ in } \;\bar\omega,
\tag*{(\theequation)$_1$} \refstepcounter{equation} \label{Con1}\\
& \|u_i-u_{i-1}\|_1\leq C\delta_0^{1/2}, \tag*{(\theequation)$_2$} \label{Con2}\\
& \|\nabla^{(m)}\nabla^{(2)}u_i\|_0+ \|\nabla^{(m)}\nabla E_i^\theta\|_0
\leq C\lambda_0\lambda_i^m\sigma^j\delta_0^{1/2}\\
& \hspace{2.6cm} \mbox{ for all }(j-1)k<i\leq jk, \; j=1\ldots J,
 \; \theta=1\ldots k,\; m\geq 0,
\tag*{(\theequation)$_3$} \label{Con_full}\\
&\hspace{-3mm}\left. \begin{array}{ll}\|\nabla^{(m)}\langle\partial_{pq}^2u_i, E_i^\theta\rangle\|_0\leq
C\lambda_0\lambda_i^m\sigma^j\delta_0^{1/2} 
 & \mbox{ for } \; jk\leq i<jk+\theta \\
\|\nabla^{(m)}\langle\partial_{pq}^2u_i, E_i^\theta\rangle\|_0\leq
C\lambda_0\lambda_i^m\sigma^{j+1}\delta_0^{1/2} 
& \mbox{ for } \; jk + \theta\leq i\leq (j+1)k \end{array}\right\}
 \\ & \hspace{2.6cm} \mbox{ for all }\; j=0\ldots J-1,\; \theta =1\ldots k,\; m\geq 0,
\tag*{(\theequation)$_4$} \label{Con3}\\     
& \|\nabla^{(m)}\mathcal{D}(g_0-\delta_sH_0, u_{sd_*})\|_0\leq \frac{r_0}{5}\lambda_{sd_*}^m\delta_s
\quad\mbox{ for all } s=1\ldots S, \; m\geq 0.
\tag*{(\theequation)$_5$} \label{Con5}
\end{align*}
Observe that, writing the first assertion in \ref{Con_full} as:
$$\|\nabla^{(m)}\nabla^{(2)}u_i\|_0\leq C\lambda_i^{m+1}A \quad\mbox{
  with }\; A=\frac{\lambda_0}{\lambda_i}\sigma^j\delta_0^{1/2}$$
and noting that $A\leq \sigma^J\delta_0^{1/2}<1$ by the last
assumption in (\ref{Asu1}), we use Lemma \ref{lem_Tu} to get:
\begin{equation}\label{Con6}
\begin{split}
\|T_i\|_0\leq C, \quad & \|\nabla^{(m)}T_i\|_0\leq C\lambda_0\lambda_i^{m-1}\sigma^j\delta_0^{1/2}
\\ & \mbox{for all } \; (j-1)k<i\leq jk,\; j=1\ldots J,\; m\geq 1.
\end{split}
\end{equation}
Also, we note the simplified version of the bound above and
the second bound in \ref{Con_full}:
\begin{equation}\label{Con7}
\|\nabla^{(m)}E_i^\theta\|_0+\|\nabla^{(m)}T_i\|_0\leq
C\lambda_i^{m}\quad \mbox{for all } \; m\geq 0,
\end{equation}
which clearly holds at $m=0$, whereas at $m\geq 1$ we use that 
$\sigma^j\delta_0^{1/2}\leq 1$ in virtue of the last assumption in (\ref{Asu1}).
The statements \ref{Con1}-\ref{Con5} will be proved inductively.
For the induction base, observe that at $i=0$, the assertion \ref{Con1}
already holds by (\ref{thalata}), while \ref{Con_full} holds by the
last claim in (\ref{ithnain}) and the first one in
(\ref{arbaa}). Whereas, the second assertion in there validates the first
inequality in \ref{Con3}, meanwhile the second inequality is void at $i=0$.
Finally, the estimate \ref{Con5} holds for $s=0$ in the form of (\ref{sitta}).

\medskip

{\bf 3. (The defect decomposition)}  Assume \ref{Con5} at some
$s=1\ldots S-1$ or assume (\ref{sitta}) if $s=0$. We want to apply
Lemma \ref{lem_dec_def} to the scaled defect:
$$H=\frac{1}{\delta_s}\mathcal{D}(g_0-\delta_{s+1}H_0, u_{sd_*}).$$
To check that $H(x)\in B(H_0, r_0)$ for all $x\in\bar\omega$, we note
that $H-H_0 = \big(\mathcal{D}(g_0-\delta_sH_0,
u_{sd_*})-\delta_{s+1}H_0\big)/\delta_s$ which implies:
\begin{equation*}
\| H-H_0\|_0\leq \frac{\|\mathcal{D}(g_0-\delta_sH_0,
  u_{sd_*})\|_0}{\delta_s} + \frac{\delta_{s+1}}{\delta_s}H_0 \leq \frac{r_0}{3}
+ \frac{\tilde C_{s+1}}{\tilde C_s\sigma}|H_0|< \frac{r_0}{2}
\end{equation*}  
where the first term was estimated by \ref{Con5} or (\ref{sitta}), and
for the second term is bounded in view of the following assumption,
complementary to that of (\ref{assu_CC}):
\begin{equation}\label{assu_CC2}
\frac{\tilde C_s}{\tilde C_{s-1}} \cdot
\frac{6|H_0|}{r_0}<\underline\sigma \quad\mbox{for all }\; s=1\ldots S.
\end{equation}
Therefore, by Lemma \ref{lem_dec_def} we obtain the decomposition:
\begin{equation*}
\begin{split}
& H=\sum_{\rho=1}^{d_*}\bar a_\rho(H) \eta_\rho\otimes\eta_\rho
\quad\mbox{ with }\; \|\bar a_\rho(H)-1\|_0\leq \frac{1}{2} \;\mbox{ for all }\;\rho=1\ldots d_*,
\\ & \|\nabla^{(m)}\bar a_\rho(H)\|_0\leq C\|\nabla^{(m)}H\|_0\leq 
C \frac{\|\nabla^{(m)}\mathcal{D}(g_0-\delta_sH_0, u_{sd_*})\|_0}{\delta_s} \leq
C\lambda_{sd_*}^m \quad\mbox{ for all }\;m\geq 1.
\end{split}
\end{equation*}
Note that the above bounds are independent of $\tilde C_{s+1}$
or $\delta_{s+1}$ and only utilize properties of quantities
obtained in the so-far inductive process, as long as (\ref{assu_CC2}) holds. We now define:
\begin{equation}\label{aszra}
(a_\rho^s)^2=\delta_s \bar a_\rho(H), \quad\mbox{ so that: }\;
\mathcal{D}(g_0-\delta_{s+1}H_0, u_{sd_*})=\sum_{\rho=1}^{d_*}(a_\rho^s)^2\eta_\rho\otimes\eta_\rho,
\end{equation}
and observe the following:
\begin{equation}\label{ahadashar}
\begin{split}
&\|(a_\rho^s)^2-\delta_s\|_0\leq \frac{\delta_s}{2} \quad\mbox{ and } \quad
\|a_\rho^s - \delta_s^{1/2}\|_0= \big\|\frac{(a_\rho^s)^2-\delta_s}{a_\rho^s+\delta_s^{1/2}}\big\|_0
\leq \frac{\delta_s/2}{\delta_s^{1/2}}=\frac{\delta_s^{1/2}}{2},\\
& \|\nabla^{(m)}(a_\rho^s)^2\|_0\leq C\lambda_{sd_*}^m\delta_s 
\quad\mbox{ and } \quad \|\nabla^{(m)}a_\rho^s\|_0\leq C\lambda_{sd_*}^m\delta_s^{1/2}
\quad \mbox{ for all }\; m\geq 1,
\end{split}    
\end{equation}
where the last bound follows by the Fa\'a di Bruno inequality in:
\begin{equation*}
\begin{split}
& \|\nabla^{(m)}a_\rho^s\|_0 \leq C\Big\|\sum_{p_1+2p_2+\ldots
  +mp_m=m} \hspace{-6mm}
(a_\rho^s)^{2(\frac{1}{2}-p_1-\ldots -p_m)}\prod_{t=1}^m|\nabla^{(t)}(a_\rho^s)^2|^{p_t}\Big\|_0
\\ & \leq C\|a_\rho^s\|_0\hspace{-3mm}\sum_{p_1+2p_2+\ldots +mp_m=m}\;
\prod_{t=1}^m\Big\|\frac{\nabla^{(t)}(a_\rho^s)^2}{(a_\rho^s)^2}\Big\|_0^{p_t}\leq
C\delta_s^{1/2}\hspace{-3mm}\sum_{p_1+2p_2+\ldots +mp_m=m}\;
\prod_{t=1}^m\Big\|\frac{\delta_s\lambda_{sd_*}^t}{\delta_s/2}\Big\|_0^{p_t}
\\ & \leq C\lambda_{sd_*}^m\delta_s^{1/2},
\end{split}    
\end{equation*}
Again, we point out that constants in (\ref{ahadashar}) depend only on
$\omega, \underline u, \underline\gamma, d, k$ through quantities
obtained in the so-far inductive process, but are independent of $\tilde C_{s+1}$ or $\delta_{s+1}$.

\medskip

{\bf 4. (Proof of \ref{Con1}, \ref{Con2} and \ref{Con_full})} 
Fix $i=1\ldots N$. We will prove the indicated assertions, assuming that
they hold up to the counter $i-1$. We write:
\begin{equation}\label{ahadashar1}
  i=jk+\theta = sd_*+\rho \quad \mbox{ with } \; \theta=1\ldots k, \;
\rho = 1\ldots d_*,\; j=0\ldots J-1,\; s=0\ldots S-1,
\end{equation}
and apply (\ref{tisaa}), (\ref{ahadashar}), and \ref{Con_full} 
(\ref{Con6}) in the form $\|\nabla^{(v)}E^\theta_{i-1}\|_0+
\|\nabla^{(v)}T_{i-1}\|_0\leq C\lambda_{i-1}^v$, valid in view of the last
assumption in (\ref{Asu1}), to estimate:
\begin{equation}\label{ithnashar}
\begin{split}
& \|\nabla^{(m)}(u_i-u_{i-1})\|_0\leq C\sum_{t+w+v=m} \Big(\lambda_i^{t-1}
\|\nabla^{(w)}a_\rho^s\|_0\|\nabla^{(v)}E_{i-1}^\theta\|_0 \\ &
\hspace{3cm} + \lambda_i^{t-1}
\|\nabla^{(w)}(a_\rho^s)^2\|_0\|\nabla^{(v)}T_{i-1}^\theta\|_0 
+ \lambda_i^{t-2} \|\nabla^{(w+1)}(a_\rho^s)^2\|_0\|\nabla^{(v)}T_{i-1}^\theta\|_0 \Big)
\\ & \leq C\sum_{t+w+v=m} \Big(\lambda_i^{t-1}
\delta_s^{1/2}\lambda_{sd_*}^w\lambda_{i-1}^v + \lambda_i^{t-1}
\delta_s\lambda_{sd_*}^w\lambda_{i-1}^v 
+ \lambda_i^{t-2} \delta_s\lambda_{sd_*}^{w+1}\lambda_{i-1}^v \Big)
\\ & \leq C\lambda_i^{m-1}\delta_s^{1/2} \quad\mbox{ for all }\; m\geq 0,
\end{split}
\end{equation}
since both $\lambda_{sd_*}$ and $\lambda_{i-1}$ are not greater than
$\lambda_i$. In particular, there follows \ref{Con2}, namely:
$$\|u_i-u_{i-1}\|_1\leq C\delta_s^{1/2}\leq C\delta_0^{1/2}.$$
Using now the induction assumption and (\ref{ithnain}), we obtain:
$$\|u_i-u\|_1\leq \|u_i-u_{0}\|_1 +\|u_0-u\|_1\leq C\delta_0^{1/2},$$
which yields \ref{Con1} upon setting $\underline\delta$ sufficiently
small to get, when $\delta_0=\delta\leq\underline\delta$:
\begin{equation*}
\begin{split}
\|(\nabla u_i)^T\nabla u_i - (\nabla u)^T\nabla u\|_0 &\leq \|\nabla
(u_i-u)\|_0\big(2\|\nabla u\|_0 + \|\nabla(u_i-u)\|_0\big)\\ & \leq
C\delta_0^{1/2}(2(2\underline\gamma d)^{1/2}+1)\leq C\underline\delta^{1/2}
\leq \frac{1}{6\underline\gamma},
\end{split}
\end{equation*}
where we use Lemma \ref{lem_det} and recall \ref{Asu2}.
The first inequality in \ref{Con_full} follows by (\ref{ithnashar}) in:
\begin{equation*}
\|\nabla^{(m)}\nabla^{(2)}(u_i-u_{i-1})\|_0 \leq
C\lambda_i^{m+1}\delta_s^{1/2} \leq C \lambda_0\lambda_i^m\sigma^{j+1}\delta_0^{1/2}
\quad\mbox{ for all }\; m\geq 0,
\end{equation*}
because by (\ref{sabaa}) and (\ref{thamania}) there holds:
\begin{equation}\label{ithnashar1}
\lambda_i^{m+1}\delta_s^{1/2}=
\lambda_0\lambda_i^m \delta_0^{1/2} \frac{\lambda_i}{\lambda_0}\big(\frac{\delta_s}{\delta_0}\big)^{1/2}
= \lambda_0\lambda_i^m \delta_0^{1/2}\sigma^{1+j+s/2}\frac{{\tilde
    C_s}^{1/2}}{\sigma^{s/2}}\leq C \lambda_0\lambda_i^m\sigma^{j+1}\delta_0^{1/2}. 
\end{equation}
To define the normal frame $\{E_i^\theta\doteq
E_{u_i}^\theta\}_{\theta=1}^k$ and prove the second inequality in
\ref{Con_full}, we use Lemma \ref{lem_propa} (iii). We first validate its assumptions. By
(\ref{ithnashar}) and the induction assumption:
\begin{equation*}
\begin{split}
& \|\nabla^{(m)}(\nabla u_{i}- \nabla u_{i-1})\|_0\leq
C\lambda_i^m\delta_s^{1/2} \qquad\mbox{for all }\; m\geq 0,
\\ & \|\nabla^{(m)}\nabla^{(2)}u_{i-1}\|_0
+ \|\nabla^{(m+1)}E_{i-1}^\theta\|_0\leq C\lambda_{i-1}^{m+1}\leq C\lambda_i^{m+1}
\quad \mbox{for all }\; \theta=1\ldots k,\; m\geq 0.
\end{split}
\end{equation*}
Thus, taking $A=\delta_s^{1/2}$ and $\mu=\lambda_i$, the estimate in
Lemma \ref{lem_propa} (iii) becomes:
\begin{equation}\label{thalatatashar}
\|\nabla^{(m)}(E^\theta_{i}-E^\theta_{i-1})\|_0\leq C\lambda_i^m\delta_s^{1/2}
\quad \mbox{for all }\; \theta=1\ldots k,\; m\geq 0.
\end{equation}
This completes the proof of \ref{Con_full}, because
$\lambda_i^{m+1}\delta_s^{1/2}\leq \lambda_0\lambda_i^m\sigma^{j+1}\delta_0^{1/2}$,
as shown in (\ref{ithnashar1}).

\medskip

{\bf 5. (Proof of \ref{Con3})}  We fix the index $i=1\ldots N$ and express it as
in (\ref{ahadashar1}), with the appropriate $\theta$ and $\rho$. Now,
fix any $\bar\theta=1\ldots k$ and estimate derivatives of the three terms in: 
\begin{equation*}
\begin{split}
\langle \partial_{pq}^2u_i, E_i^{\bar\theta}\rangle = I+ II+ III,
\quad\mbox{where }\;
& I =\langle \partial_{pq}^2u_{i-1}, E_{i-1}^{\bar\theta}\rangle,\quad
II= \langle \partial_{pq}^2u_{i}, E_i^{\bar\theta} - E_{i-1}^{\bar\theta}\rangle,\\ &
III=\langle \partial_{pq}^2(u_i-u_{i-1}), E_{i-1}^{\bar\theta}\rangle.
\end{split}
\end{equation*}
For the first term $I$, note that $jk\leq i-1<(j+1)k$, so the
induction assumption on \ref{Con3} implies that for any $\bar\theta$ there holds:
$$\|\nabla^{(m)}I\|_0\leq C\lambda_0\lambda_{i-1}^m\sigma^{j+1}\delta_0^{1/2}\leq 
C\lambda_0\lambda_{i}^m\sigma^{j+1}\delta_0^{1/2} \quad\mbox{for all }\; m\geq 0.$$
On the other hand, when $jk<i=jk+\theta<jk+\bar\theta$ then, trivially 
$jk\leq i-1<jk+\bar\theta$, so:
$$\|\nabla^{(m)}I\|_0\leq C\lambda_0\lambda_{i-1}^m\sigma^{j}\delta_0^{1/2}\leq 
C\lambda_0\lambda_{i}^m\sigma^{j}\delta_0^{1/2} \quad\mbox{for all }\; m\geq 0.$$
For the terms $II$, we use the already proven bound on
$\|\nabla^{(m+2)}u_i\|_0$ and (\ref{thalatatashar}) to get:
\begin{equation*}
\begin{split}
\|\nabla^{(m)}II\|_0&\leq C\sum_{t+v=m}\|\nabla^{(t+2)}u_i\|_0\|\nabla^{(v)}
(E_i^{\bar\theta} - E_{i-1}^{\bar\theta})\|_0\\ & \leq C\sum_{t+v=m}
\lambda_0\lambda_i^t\sigma^{j+1}\delta_0^{1/2}\lambda_i^v\delta_s^{1/2}
\leq C \lambda_0\lambda_i^m\sigma^{j}\delta_0^{1/2}(\sigma\delta_s^{1/2}) \\
& \leq C \lambda_0\lambda_i^m\sigma^{j}\delta_0^{1/2} \quad\mbox{for all }\; m\geq 0,
\end{split}
\end{equation*}
as $\sigma\delta_s^{1/2}\leq \sigma\delta_0^{1/2}\leq 1$ by the
last assumption in (\ref{Asu1}).
For the term $III$, we use (\ref{tisaa}) to get:
\begin{equation*}
\begin{split}
III= & \, A+B+D+E \quad\mbox{where }\;
A = \partial_{pq}^2\Big(\frac{\Gamma(\lambda_i\langle x,
  \eta_\rho\rangle)}{\lambda_i}a_\rho^s\Big) \langle E_{i-1}^\theta, E_{i-1}^{\bar\theta}\rangle,
\\ & B = \partial_{p}\Big(\frac{\Gamma(\lambda_i\langle x,
  \eta_\rho\rangle)}{\lambda_i}a_\rho^s\Big) \langle \partial_qE_{i-1}^\theta, E_{i-1}^{\bar\theta}\rangle
+ \partial_{q}\Big(\frac{\Gamma(\lambda_i\langle x,
  \eta_\rho\rangle)}{\lambda_i}a_\rho^s\Big) \langle \partial_pE_{i-1}^\theta, E_{i-1}^{\bar\theta}\rangle
\\ & \hspace{0.8cm} + \frac{\Gamma(\lambda_i\langle x,
  \eta_\rho\rangle)}{\lambda_i}a_\rho^s\langle \partial_{pq}E_{i-1}^\theta, E_{i-1}^{\bar\theta}\rangle,
\\ &  D =  \partial_{p}\Big(\frac{\bar\Gamma(\lambda_i\langle x,
  (\eta_\rho)^2\rangle)}{\lambda_i}(a_\rho^s)^2\Big) \langle (\partial_qT_{i-1})\eta_\rho, E_{i-1}^{\bar\theta}\rangle
+ \partial_{q}\Big(\frac{\bar\Gamma(\lambda_i\langle x,
  \eta_\rho\rangle)}{\lambda_i}(a_\rho^s)^2\Big) \langle (\partial_pT_{i-1})\eta_\rho, E_{i-1}^{\bar\theta}\rangle
\\ & \hspace{0.8cm} + \frac{\bar\Gamma(\lambda_i\langle x,
  \eta_\rho\rangle)}{\lambda_i}(a_\rho^s)^2\langle (\partial^2_{pq}T_{i-1})\eta_\rho, E_{i-1}^{\bar\theta}\rangle,
\\ &  E =  \Big\langle (\partial_pT_{i-1})\partial_q \Big(\frac{\dbar\Gamma(\lambda_i\langle x,
  \eta_\rho\rangle)}{2\lambda_i^2}\nabla(a_\rho^s)^2\Big), E_{i-1}^{\bar\theta}\Big\rangle  
\\ & \hspace{0.8cm} + \Big\langle (\partial_qT_{i-1})\partial_p \Big(\frac{\dbar\Gamma(\lambda_i\langle x,
  \eta_\rho\rangle)}{2\lambda_i^2}\nabla(a_\rho^s)^2\Big), E_{i-1}^{\bar\theta}\Big\rangle  
+ \frac{\dbar\Gamma(\lambda_i\langle x, \eta_\rho\rangle)}{2\lambda_i^2}
\langle (\partial^2_{pq}T_{i-1})\nabla(a_\rho^s)^2, E_{i-1}^{\bar\theta}\rangle. 
\end{split}
\end{equation*}
Note that the term $A$ is nonzero only when $\theta=\bar\theta$, 
in which case we get by (\ref{ahadashar}) and (\ref{ithnashar1}):
\begin{equation*}
\begin{split}
\|\nabla^{(m)}A\|_0 & \leq
C\hspace{-2mm}\sum_{t+v=m+2}\hspace{-2mm}\lambda_i^{t-1}\|\nabla^{(v)}a_\rho^s\|_0\leq 
C\hspace{-2mm}\sum_{t+v=m+2}\hspace{-2mm}\lambda_i^{t-1}\lambda_{sd_*}^v\delta_s^{1/2}
\\ & \leq C \lambda_i^{m+1}\delta_s^{1/2}\leq
C\lambda_0\lambda_i^m\sigma^{j+1}\delta_0^{1/2}\quad\mbox{ for all }\; m\geq 0.
\end{split}
\end{equation*}
When $jk<i=jk+\theta<jk+\bar\theta$, then certainly $\theta\neq
\bar\theta$, so the above term is not present in $III$. We now show
that all other terms estimate by
$C\lambda_0\lambda_i^m\sigma^{j}\delta_0^{1/2}$ in their $m$-th
derivatives, for any $\bar\theta=1\ldots k$. Indeed, from
(\ref{ahadashar}) and \ref{Con_full}, (\ref{Con7}) at $i-1$ we get:
\begin{equation*}
\begin{split}
\|\nabla^{(m)}B\|_0  & \leq C\hspace{-2mm}\sum_{t+w+v=m}\;\sum_{\bar t+\bar w =
  t+1}\hspace{-2mm}\lambda_i^{\bar t-1}\|\nabla^{(\bar w)}a_\rho^s\|_0
\|\nabla^{(w+1)}E_{i-1}^{\theta}\|_0 \|\nabla^{(v)}E_{i-1}^{\bar\theta}\|_0 
\\ & \quad + C\hspace{-2mm}\sum_{t+w+v+\bar v=m}\hspace{-2mm}
\lambda_i^{t-1}\|\nabla^{(w)}a_\rho^s\|_0
\|\nabla^{(v+2)}E_{i-1}^{\theta}\|_0 \|\nabla^{(\bar v)}E_{i-1}^{\bar\theta}\|_0 
\\ & \leq C\hspace{-2mm}\sum_{t+v+w=m}\;\sum_{\bar t+\bar w =
  t+1}\hspace{-2mm}\lambda_i^{\bar t-1}\lambda_{sd_*}^{\bar w}\delta_s^{1/2}
\lambda_0\lambda_{i-1}^w\sigma^{j+1}\delta_0^{1/2}\lambda_{i-1}^v 
\\ & \quad + C\hspace{-2mm}\sum_{t+w+v+\bar v=m}\hspace{-2mm}
\lambda_i^{t-1}\lambda_{sd_*}^{w}\delta_s^{1/2}
\lambda_0\lambda_{i-1}^{v+1}\sigma^{j+1}\delta_0^{1/2}\lambda_{i-1}^{\bar v} 
\\ & \leq C \lambda_0\lambda_i^{m}\sigma^j\delta_0^{1/2}
(\sigma\delta_s^{1/2}) \leq C\lambda_0\lambda_i^m\sigma^{j}\delta_0^{1/2}
\quad\mbox{ for all }\; m\geq 0,
\end{split}
\end{equation*}
since $\sigma\delta_s^{1/2}\leq 1$. Similarly, replacing the
application of \ref{Con_full} by (\ref{Con6}), implies:
\begin{equation*}
\begin{split}
\|\nabla^{(m)}D\|_0  & \leq C\hspace{-2mm}\sum_{t+w+v=m}\;\sum_{\bar t+\bar w =
  t+1}\hspace{-2mm}\lambda_i^{\bar t-1}\|\nabla^{(\bar w)}(a_\rho^s)^2\|_0
\|\nabla^{(w+1)}T_{i-1}\|_0 \|\nabla^{(v)}E_{i-1}^{\bar\theta}\|_0 
\\ & \quad + C\hspace{-2mm}\sum_{t+w+v+\bar v=m}\hspace{-2mm}
\lambda_i^{t-1}\|\nabla^{(w)}(a_\rho^s)^2\|_0
\|\nabla^{(v+2)}T_{i-1}\|_0 \|\nabla^{(\bar v)}E_{i-1}^{\bar\theta}\|_0 
\\ & \leq C \lambda_0\lambda_i^{m}\sigma^j\delta_0^{1/2}
(\sigma\delta_s) \leq C\lambda_0\lambda_i^m\sigma^{j}\delta_0^{1/2}
\quad\mbox{ for all }\; m\geq 0.
\end{split}
\end{equation*}
Finally, for the term $E$ we get, as above:
\begin{equation*}
\begin{split}
\|\nabla^{(m)}E\|_0  & \leq C\hspace{-2mm}\sum_{t+w+v=m}\;\sum_{\bar t+\bar w =
  t+1}\hspace{-2mm}\lambda_i^{\bar t-2}\|\nabla^{(\bar w+1)}(a_\rho^s)^2\|_0
\|\nabla^{(w+1)}T_{i-1}\|_0 \|\nabla^{(v)}E_{i-1}^{\bar\theta}\|_0 
\\ & \quad + C\hspace{-2mm}\sum_{t+w+v+\bar v=m}\hspace{-2mm}
\lambda_i^{t-2}\|\nabla^{(w+1)}(a_\rho^s)^2\|_0
\|\nabla^{(v+2)}T_{i-1}\|_0 \|\nabla^{(\bar v)}E_{i-1}^{\bar\theta}\|_0 
\\ &  \leq C\hspace{-2mm}\sum_{t+v+w=m}\;\sum_{\bar t+\bar w =
  t+1}\hspace{-2mm}\lambda_i^{\bar t-2}\lambda_{sd_*}^{\bar w+1}\delta_s
\lambda_0\lambda_{i-1}^w\sigma^{j+1}\delta_0^{1/2}\lambda_{i-1}^v 
\\ & \quad + C\hspace{-2mm}\sum_{t+w+v+\bar v=m}\hspace{-2mm}
\lambda_i^{t-2}\lambda_{sd_*}^{w+1}\delta_s
\lambda_0\lambda_{i-1}^{v+1}\sigma^{j+1}\delta_0^{1/2}\lambda_{i-1}^{\bar v} 
\\ & \leq C \lambda_0\lambda_i^{m}\sigma^j\delta_0^{1/2}
(\sigma\delta_s) \leq C\lambda_0\lambda_i^m\sigma^{j}\delta_0^{1/2}
\quad\mbox{ for all }\; m\geq 0.
\end{split}
\end{equation*}
This ends the proof of \ref{Con3} at the counter value $i$.

\medskip

{\bf 6. (Proof of \ref{Con5})} Fix the index $s=0\ldots S-1$ and assume
that \ref{Con1} - \ref{Con3} hold for all $i=1\ldots (s+1)d_*$,
together with \ref{Con5} if $s\geq 1$ or (\ref{sitta}) in case
$s=0$. To prove \ref{Con5} at the counter value $s+1$, we use the
decomposition (\ref{aszra}) and write:
\begin{equation}\label{arbatashar}
\begin{split}
& \mathcal{D}(g_0 - \delta_{s+1}H_0, u_{(s+1)d_*}) \\ & = \mathcal{D}(g_0 - \delta_{s+1}H_0, u_{sd_*}) 
-\Big( (\nabla u_{(s+1)d_*})^T \nabla u_{(s+1)d_*} - (\nabla u_{sd_*})^T \nabla u_{sd_*}\Big)
= \sum_{\rho=1}^{d_*} I_\rho, \\ & \mbox{where: }\;
I_\rho= (a_\rho^s)^2\eta_\rho\otimes\eta_\rho - 
\Big( (\nabla u_{sd_*+\rho})^T \nabla u_{sd_*+\rho} - (\nabla
u_{sd_*+\rho-1})^T \nabla u_{sd_*+\rho-1}\Big).
\end{split}
\end{equation}
Recalling (\ref{sisi}) in Lemma \ref{lem_step2}, we see that each term
$I_\rho$ for $\rho=1\ldots d_*$
has the following form, where we denote $i=sd_*+\rho = jk+\theta$
for the uniquely defined $j=0\ldots J-1$ and $\theta=1\ldots k$:
\begin{equation*}
\begin{split}
& I_\rho = A+B+D-\mathcal{R}, \quad \mbox{where: }\; A = \frac{2\Gamma(\lambda_i\langle x,
  \eta_\rho\rangle)}{\lambda_i}a_\rho^s\big[\langle \partial_{pq}^2u_{i-1},
E_{i-1}^\theta\rangle\big]_{p,q=1\ldots d} \\ & B =
-\frac{2\dbar\Gamma(\lambda_i\langle
  x,\eta_\rho\rangle)}{\lambda_i^2}a_\rho^s\nabla^2a_\rho^s,
\qquad D = -\frac{\tbar\Gamma(\lambda_i\langle
  x,\eta_\rho\rangle)}{\lambda_i^2}\nabla a_\rho^s\otimes \nabla a_\rho^s,
\end{split}
\end{equation*}
and where $\mathcal{R}$ comprises the terms $\mathcal{R}_0\ldots\mathcal{R}_4$
given in Lemma \ref{lem_step2} with the following defining quantites: $\lambda=\lambda_i$,
$a=a_\rho^s$, $\eta=\eta_\rho$, $u=u_{i-1}$, $E_u=E_{i-1}^\theta$. We now estimate,
in virtue of (\ref{ahadashar}), and of \ref{Con3} used with $jk\leq i-1<jk+\theta$:
\begin{equation*}
\begin{split}
\|\nabla^{(m)}A\|_0 & \leq C\hspace{-2mm}\sum_{t+w+v=m}\hspace{-2mm}
\lambda_i^{t-1}\|\nabla^{(w)}a_\rho^s\|_0\sum_{p,q=1\ldots d}\hspace{-2mm}
\|\nabla^{(v)}\langle \partial_{pq}^2u_{i-1}, E_{i-1}^\theta\rangle\|_0
\\ & \leq C \hspace{-2mm}\sum_{t+w+v=m}\hspace{-2mm}
\lambda_i^{t-1}\lambda_{sd_*}^w\delta_s^{1/2} \lambda_0\lambda_{i-1}^v\sigma^j\delta_0^{1/2}
\leq C \lambda_0\lambda_i^{m-1}\delta_s^{1/2}\sigma^j\delta_0^{1/2} 
\\ & = C\lambda_i^m\delta_s\sigma^j
\frac{\lambda_0}{\lambda_i}\big(\frac{\delta_0}{\delta_s}\big)^{1/2}\leq
C\lambda_{(s+1)d_*}^m\frac{\delta_s}{\sigma} \quad\mbox{ for all }\; m\geq 0.
\end{split}
\end{equation*}
The last bound is due to $\lambda_i\leq \lambda_{(s+1)d_*}$ and to
(\ref{sabaa}), (\ref{thamania}) in:
$$\sigma^j \frac{\lambda_0}{\lambda_i}\big(\frac{\delta_0}{\delta_s}\big)^{1/2}
= \frac{\sigma^j}{\sigma^{1+j+s/2}} \frac{\sigma^{s/2}}{\tilde
C_s^{1/2}}\leq \frac{1}{\sigma}.$$
The next two terms $B$ and $D$ are similarly estimated by:
\begin{equation*}
\begin{split}
\|\nabla^{(m)}(B+D)\|_0 & \leq C\hspace{-2mm}\sum_{t+w+v=m}\hspace{-2mm}
\lambda_i^{t-2} \big(\|\nabla^{(w)}a_\rho^s\|_0\|\nabla^{(v+2)}a_\rho^s\|_0 +
\|\nabla^{(w+1)} a_\rho^s\|_0\|\nabla^{(v+1)} a_\rho^s\|_0\big) 
\\ & \leq C \hspace{-2mm}\sum_{t+w+v=m}\hspace{-2mm}
\lambda_i^{t-2}\lambda_{sd_*}^{w+v+2}\delta_s
\leq C \lambda_i^{m}\frac{\delta_s}{(\lambda_i/\lambda_{sd_*})^2} \\ &
\leq C\lambda_{(s+1)d_*}^m\frac{\delta_s}{\sigma} \quad\mbox{ for all }\; m\geq 0,
\end{split}
\end{equation*}
because $\lambda_i/\lambda_{sd_*}\geq \sigma^{1/2}$ from the
definition of the progression of frequencies leading to (\ref{sabaa}).
We proceed to estimating derivatives of the remainder terms in
$\mathcal{R}$. For $\mathcal{R}_0$, we observe:
\begin{equation*}
\begin{split}
\|\nabla^{(m)}\mathcal{R}_0\|_0 & \leq C\hspace{-2mm}\sum_{t+w+v=m}\hspace{-2mm}
\lambda_i^{t} \big(\|\nabla^{(w)}(a_\rho^s)^4\|_0\|\nabla^{(v)}|T_{i-1}|^2\|_0
\leq C \hspace{-2mm}\sum_{t+w+v=m}\hspace{-2mm}
\lambda_i^{t}\lambda_{sd_*}^{w}\delta_s^2\lambda_{i-1}^v \\ & \leq C \lambda_i^{m}\delta_s^2
\leq C\lambda_{(s+1)d_*}^m\frac{\delta_s}{\sigma} \quad\mbox{ for all }\; m\geq 0,
\end{split}
\end{equation*}
where we used (\ref{ahadashar}), (\ref{Con7}) and the fact that
$\sigma \delta_s\leq \sigma\delta_0\leq 1$, evident from the last
of the conditions listed in (\ref{Asu1}). For the remaining terms constituting
$\mathcal{R}_1\ldots \mathcal{R}_4$, we note that each of them contains
the product of at least two quantities involving $a_\rho^s$ (or
$\nabla a_\rho^s$), which jointly contribute the multiplier $\delta_s$ in the estimate;
and the products of either another instance of $a_\rho^s$ or one of:
$\nabla^2u$ or $\nabla E_{i-1}^\theta$, each of which contribute the multiplier
bounded from above by $\delta_0^{1/2}$. By assuring $\delta_0^{1/2}$
to be sufficiently small with respect to the powers of $\sigma$
accumulated in the estimate of each term, through the last assumption
in (\ref{Asu1}), we arrive at:
\begin{equation*}
\|\nabla^{(m)}\mathcal{R}\|_0 \leq
C\lambda_{(s+1)d_*}^m\frac{\delta_s}{\sigma} \quad\mbox{ for all }\; m\geq 0,
\end{equation*}
Consequently, it follows that each $\nabla^{(m)}I_\rho$, for
$\rho=1\ldots d_*$, obeys the same bound as above, hence by
(\ref{arbatashar}) we may conclude that:
\begin{equation*}
\begin{split}  
& \|\nabla^{(m)}\mathcal{D}(g_0-\delta_{s+1}H_0, u_{(s+1)d_*})\|_0 \leq
C\lambda_{(s+1)d_*}^m\frac{\delta_s}{\sigma}
\\ & = \frac{r_0}{5}\lambda_{(s+1)d_*}^m\delta_{s+1} \cdot
\frac{5C}{r_0}\frac{\tilde C_s}{\tilde C_{s+1}}
\leq \frac{r_0}{5}\lambda_{(s+1)d_*}^m\delta_{s+1}
\quad\mbox{ for all }\; m\geq 0.
\end{split}
\end{equation*}
The last estimate is valid in view of (\ref{thamania}), provided that we define $\tilde C_{s+1}$ so that:
$$\tilde C_{s+1}\geq \frac{5C}{r_0}\tilde C_s.$$
This ends the proof of \ref{Con5} at the counter value $s+1$, and the
proof of all the inductive estimates \ref{Con1} - \ref{Con5}.

\medskip

{\bf 7. (The reparametrization and the end of proof)}  
We now define:
$$\tilde u = u_N,$$
whereupon \ref{Con1} - \ref{Con5} and (\ref{ithnain}), (\ref{thamania}) imply:
\begin{equation}\label{Conny}
\begin{split}
& \|u-\tilde u\|_1\leq C\delta_0^{1/2}, \qquad
\|\nabla^2 \tilde u\|_0\leq C\lambda_0\sigma^J\delta_0^{1/2},
\\ & \Big\|\mathcal{D}\Big(g - \frac{\tilde C_S\delta_0}{\sigma^S} H_0, \tilde u\Big)\Big\|_0
\leq \frac{r_0}{5}\frac{\tilde C_S\delta_0}{\sigma^S} + \frac{\|g\|_{r,\beta}}{\lambda_0^{r+\beta}}.
\end{split}
\end{equation}
These are essentially the claimed bounds \ref{Res1}, \ref{Res2}, up to
the constant $\tilde C_S$, that depends only on $\omega, \underline u,
\underline \gamma, d, k$. Given $\sigma$, we define the associated reparametrized relative frequency:
$$\sigma_{new} = \frac{\sigma}{\tilde C_S^{1/S}},$$
whereupon the first two bounds in (\ref{Conny}) remain the same and the last one
becomes exactly \ref{Res2}. We claim that the assumptions in
\ref{Ass1} for $\sigma_{new}$ imply assumptions in (\ref{Asu1}) for $\sigma$:
\begin{equation*}
\begin{split}
& \sigma= \tilde C_S^{1/S}\sigma_{new}\geq \sigma_{new}\geq \underline\sigma,
\\ & \sigma^{J+2}\delta^{1/2}= \tilde C_S^{(J+2)/S} \sigma_{new}^{J+2}\delta^{1/2}
\leq \sigma_{new}^{J+3}\delta^{1/2}\leq 1,
\end{split}
\end{equation*}
provided that $\tilde C_S^{(J+2)/S} \leq \sigma_{new}$, which is
secured by taking a sufficiently large $\underline\sigma\geq \tilde C_S^{(J+2)/S}$.
The proof is done.
\endproof

\section{Energy scaling bound for non-Euclidean films and a proof
  of Theorem \ref{th_elasticity}} \label{sec_nonEu}   

Given a solution $u\in\mathcal{C}^{1,\alpha}(\bar\omega,
\R^{d+k})$ of (\ref{II}) corresponding to the $d$-dimensional Riemannian metric
$g(\cdot, 0)_{d\times d}$ on $\bar\omega$ -- that is, the restriction
induced by the ambient $(d+k)$-dimensional metric $g$, to the midplate $\bar\omega$
-- whose existence is guaranteed by Theorem \ref{th_final},
we will explicitly define the family of immersions:
$$\{u^h\in H^1(\Omega^h,\R^{d+k})\}_{h\to 0}$$
via a suitable version of the Kirchhoff-Love extension, adapted to the metric
$g$. We will then show that, with a constant $C$ depending
only on $\omega, g, d, k,\alpha, u$, but not on $h$, there holds:
$$\mathcal{E}^h_g(u^h) \leq Ch^{\frac{4\alpha}{\alpha+1}} \quad\mbox{ for all } \; h\ll 1.$$
We use the following notation. For a matrix $g\in\R^{(d+k)\times (d+k)}_{\sym, >}$,
we denote by $g_{d\times d}$, $g_{d\times k}$,
$g_{k\times d}$, $g_{k\times k}$, the four block submatrices
determined by the first $d$ and last $k$ rows
and columns, respectively. Clearly $g_{d\times k} = (g_{k\times
  d})^T$, and both matrices $g_{d\times d}$ and $g_{k\times k}$ are
symmetric and positive definite, satisfying the formula:
\begin{equation}\label{foro}
\big((g^{-1})_{k\times k} \big)^{-1} = g_{k\times k} - g_{k\times d} (g_{d\times d})^{-1} g_{d\times k}.
\end{equation}

\bigskip

\noindent {\bf Proof of Theorem \ref{th_elasticity}}

{\bf 1.} Fix $0<\alpha<\min\big\{\frac{r+\beta}{2}, q(d,k)\big\}$,
where $q=q(d,k)\leq 1$ is the maximal regularity exponent achievable
through any variant of Theorem \ref{th_final}. In the present paper, the range
(\ref{range}) implies $q = \frac{1}{1+2d_*/k}$. Then, there exists
$u\in\mathcal{C}^{1,\alpha}(\bar\omega, \R^{d+k})$ satisfying: 
$$(\nabla u)^T\nabla u = g(\cdot,0)_{d\times d}\quad \mbox{ in } \bar\omega.$$
Without loss of
generality, $r+\beta\leq 2$.  We will use the regularization as in Lemma \ref{lem_stima}:
$$u_\epsilon \ast \phi_\epsilon\in\mathcal{C}^2(\bar\omega, \R^{d+k}), \quad \mbox{ where } \; \epsilon = h^t,$$
for some exponent $t>0$ that will be chosen later.  It follows by a direct inspection that:
\begin{equation}\label{teraz}
\begin{split}
& \|\nabla u_\epsilon\|_0\leq C, \qquad \|\nabla u_\epsilon\|_{0,\alpha}\leq C,
\qquad  \|\nabla^2u_\epsilon\|_0\leq C\epsilon^{\alpha-1},
\\ & \| g(\cdot, 0)_{d\times d}\ast \phi_\epsilon - g(\cdot,
0)_{d\times d}\|_0\leq C\epsilon^{r+\beta}\leq C \epsilon^{2\alpha},
\\ & \|(\nabla u_\epsilon)^T\nabla u_\epsilon - g(\cdot, 0)_{d\times d}\ast \phi_\epsilon\|_0\leq C\epsilon^{2\alpha}, 
\end{split}  
\end{equation}
where in the last bound we used the following extension of \ref{stima4} from \cite[Lemma 2.1]{CDS}:
$$\|\nabla^{(m)}\big((fg)\ast\phi_l - (f\ast\phi_l)(g\ast\phi_l)\big)\|_0\leq C
l^{2\alpha-m}\|f\|_{0,\alpha} \|g\|_{0,\alpha},$$
with constants $C$ independent of $\epsilon$. Note that in
(\ref{teraz}) we also used that $2\alpha\leq r+\beta$.
By Corollary \ref{lem_normals} we further obtain the normal frame
$\{E_\epsilon^i\in\mathcal{C}^1(\bar\omega,\R^{d+k})\}_{i=1}^k$, for
each $\epsilon\ll 1$:
$$(\nabla u_\epsilon)^TE_\epsilon^i=0,\quad |E_\epsilon^i|=1,\quad
\langle E_\epsilon^i, E_\epsilon^j\rangle=0\quad\mbox{in }\;
\bar\omega, \quad \mbox{ for all } \; i,j=1\ldots k, \; i\neq j,$$
with the following bounds, whose uniformity is due to the fact that the corresponding
immersability constant $\gamma$ of $u_\epsilon$ and the modulus of
continuity $\|\nabla u_\epsilon\|_{0,\alpha}$ of the tangent space to $u_\epsilon(\bar\omega)$
is likewise uniform for all small $\epsilon>0$:
\begin{equation}\label{bd_norm}
\|\nabla E_\epsilon^i\|_0\leq C \|\nabla u_\epsilon\|_1\leq C \epsilon^{\alpha-1}
\quad \mbox{ for all } \; i =1\ldots k.
\end{equation}
Since the proof of Corollary \ref{lem_normals}
constructs the normal frame by inductive extensions and the Gramm-Schmidt
normalizations via Lemma \ref{lem_propa}, starting from the chosen orthonormal frame
to $\nabla u_\epsilon(x)$ at a fixed chosen $x\in\omega$, it
additionally implies the orientation preservation, namely:
\begin{equation}\label{bd_norm2}
\det \left[\begin{array}{c|ccc} \nabla u_\epsilon & E_\epsilon^1&
\ldots & E_\epsilon^k\end{array}\right] > 0 \quad\mbox{ in }\; \bar\omega.
\end{equation}

\medskip

{\bf 2. } We define the Cosserat vectors $\{b_\epsilon^i\in\mathcal{C}^1(\bar\omega,\R^{d+k})\}_{i=1}^k$
through the following formulas:
$$B_\epsilon\doteq \left[\begin{array}{cccc} b_\epsilon^1&
b_\epsilon^2&\ldots & b_\epsilon^k\end{array}\right]
= \left[\begin{array}{c|ccc} \nabla u_\epsilon & E_\epsilon^1&
\ldots & E_\epsilon^k\end{array}\right] A_\epsilon\in
\mathcal{C}^1(\bar\omega, \R^{(d+k)\times k}),$$
where the matrix of coefficients $A_\epsilon\in \mathcal{C}^1(\bar\omega, \R^{(d+k)\times k})$ is defined so that:
\begin{equation}\label{zozo}
\begin{split}  
& (\nabla u^h)^T\nabla u^h (\cdot, 0)= \left[\begin{array}{c|c} (\nabla
u_\epsilon)^T\nabla u_\epsilon& g(\cdot, 0)_{d\times k}\ast \phi_\epsilon \\
\hline g(\cdot, 0)_{k\times d}\ast \phi_\epsilon & g(\cdot, 0)_{k\times k}\ast \phi_\epsilon\end{array}\right]
\\ & \mbox{ and } \quad \det\nabla u^h(\cdot,0)  > 0 \quad \mbox{ on } \; \bar\omega,
\end{split}
\end{equation}  
for the Kirchhoff-Love extensions $u^h$ of the mollified fields $u_\epsilon$  given by:
$$u^h(x,z) = u_\epsilon(x) +\sum_{i=1}^kz_ib_\epsilon^i(x) \quad \mbox{ for all  } (x,z)\in\Omega^h.$$
To utilize the defining condition (\ref{zozo}), we first compute:
$$\nabla u^h(x,z) =  \left[\begin{array}{c|c} \nabla u_\epsilon(x) +
\sum_{i=1}^kz_i\nabla b_\epsilon^i(x) & B_\epsilon(x) \end{array}\right] 
\in \R^{(d+k)\times (d+k)} \quad \mbox{ for all  } (x,z)\in\Omega^h.$$
Equating the $d\times k$ and $k\times k$
minors in (\ref{zozo}), the first condition in there yields:
\begin{equation}\label{zozo1}
\begin{split}  
(\nabla u_\epsilon)^TB_\epsilon = g(\cdot, 0)_{d\times k}\ast
\phi_\epsilon, \qquad B_\epsilon^T B_\epsilon = g(\cdot, 0)_{k\times k}\ast \phi_\epsilon
\end{split}
\end{equation}  
Expanding with the help of the coefficient matrix fields $A_\epsilon$, we obtain:
\begin{equation*}
\begin{split}  
& (\nabla u_\epsilon)^TB_\epsilon = \left[\begin{array}{c|c} (\nabla
u_\epsilon)^T\nabla u_\epsilon &  0 \\ \end{array}\right] A_\epsilon = 
\big((\nabla u_\epsilon)^T\nabla u_\epsilon\big) (A_\epsilon)_{d\times k},
\\ & B_\epsilon^T B_\epsilon = A_\epsilon^T \mbox{diag}\big[ (\nabla
u_\epsilon)^T\nabla u_\epsilon, \mbox{Id}_{k}\big] A_\epsilon
\\ & \hspace{1cm} = (A_\epsilon)_{d\times k}^T \big((\nabla u_\epsilon)^T\nabla u_\epsilon\big) (A_\epsilon)_{d\times k}
+ (A_\epsilon)_{k\times k}^T (A_\epsilon)_{k\times k},
\end{split}
\end{equation*}  
so that (\ref{zozo1}) becomes:
\begin{equation}\label{zozo2}
\begin{split}
& (A_\epsilon)_{d\times k}= \big((\nabla u_\epsilon)^T\nabla u_\epsilon\big)^{-1}
\big(g(\cdot, 0)_{d\times k}\ast \phi_\epsilon\big), 
\\ & (A_\epsilon)_{k\times k}^T (A_\epsilon)_{k\times k}= g(\cdot, 0)_{k\times k}\ast \phi_\epsilon
- \big(g(\cdot, 0)_{k\times d}\ast \phi_\epsilon\big) \big((\nabla u_\epsilon)^T\nabla u_\epsilon\big)^{-1}
\big(g(\cdot, 0)_{d\times k}\ast \phi_\epsilon\big).
\end{split}
\end{equation}
We see that $(A_\epsilon)_{d\times k}\in\mathcal{C}^1(\bar\omega,\R^{d\times k})$ is indeed well
defined, as $\big((\nabla u_\epsilon)^T\nabla u_\epsilon\big)^{-1}$ is
well defined in virtue of the last two formulas in
(\ref{teraz}). Similarly, $(A_\epsilon)_{k\times k}\in\mathcal{C}^1(\bar\omega,\R^{k\times k}_{\sym,>})$ may be
taken as the unique symmetric, positive definite square root of the second
right hand side in (\ref{zozo2}), which is positive definite as its
difference from the following matrix field is infinitesimal as $\epsilon\to 0$:
$$g(\cdot, 0)_{k\times k} - g(\cdot, 0)_{k\times d} \big(g(\cdot, 0)_{d\times d}\big)^{-1}
g(\cdot, 0)_{d\times k} = \big((g(\cdot, 0)^{-1})_{k\times k}\big)^{-1}$$
by (\ref{teraz}), where above we have used the formula (\ref{foro}).
Towards the second requirement in (\ref{zozo}), we apply the column operations to obtain that
in $\bar \omega$ there holds:
\begin{equation*}
\begin{split}
\det \nabla u^h(\cdot, 0) & = \det \left[\begin{array}{c|c} \nabla u_\epsilon & B_\epsilon\end{array}\right] 
= \det  \Big[\begin{array}{c|c} \nabla u_\epsilon& \left[\begin{array}{cccc} \nabla u_\epsilon & E_\epsilon^1&
\ldots & E_\epsilon^k\end{array}\right] A_\epsilon \end{array}\Big] 
\\ & = \det \Big[\begin{array}{c|c} \nabla u_\epsilon & \left[\begin{array}{ccc} E_\epsilon^1&
\ldots & E_\epsilon^k\end{array}\right] (A_\epsilon)_{k\times k} \end{array}\Big] 
\\ & = \det \left[\begin{array}{c|ccc} \nabla u_\epsilon & E_\epsilon^1&
\ldots & E_\epsilon^k\end{array}\right] \cdot\det
\mbox{diag}\big[\mbox{Id}_d, (A_\epsilon)_{k\times k}\big] >0,
\end{split}
\end{equation*}
by (\ref{bd_norm2}) and since $(A_\epsilon)_{k\times k}$ is defined as
positive definite. Finally, by (\ref{teraz}) we directly note:
\begin{equation*}
\|A_\epsilon\|_0\leq C, \qquad\|\nabla (A_\epsilon)_{d\times k}\|_0
 +\|\nabla \big( (A_\epsilon)_{k\times k}^T (A_\epsilon)_{k\times
   k}\big)\|_0\leq C\epsilon^{\alpha-1},
\end{equation*}
which then implies in virtue of our construction, the derivative bound:
\begin{equation*}
\|\nabla A_\epsilon\|_0\leq C\epsilon^{\alpha-1}.
\end{equation*}
We therefore conclude, in view of (\ref{bd_norm}), that:
\begin{equation}\label{badabada}
\begin{split}
& \|B_\epsilon\|_0\leq C\big(\|\nabla u_\epsilon\|_0 +1\big)\|A_\epsilon\|_0\leq C,
\\ & \|\nabla B_\epsilon\|_0\leq C\big(\|\nabla^2 u_\epsilon \|_0+C\epsilon^{\alpha-1}\big)\|A_\epsilon\|_0
+ C\big(\|\nabla u_\epsilon\|_0 +1\big)\|\nabla A_\epsilon\|_0\leq C\epsilon^{\alpha-1}.
\end{split}
\end{equation}

\medskip

{\bf 3. } Since $\det \big((\nabla u^h) g^{-1/2}(\cdot, 0)\big)$ is
positive and bounded away from $0$ on $\bar\omega$, 
uniformly in $\epsilon\ll 1$, we use the polar decomposition
theorem to get, for some $SO(d+k)$-valued function $Q^h$:
$$(\nabla u^h) g^{-1/2}(\cdot, 0) = Q^h\Big(g^{-1/2}\big((\nabla
u^h)^T\nabla u^h\big)g^{-1/2}(\cdot, 0) \Big)^{1/2}.$$
Observe that, in view of (\ref{zozo}) and (\ref{teraz}):
\begin{equation}\label{badu2}
\begin{split}  
& \big\|g^{-1/2}\big((\nabla u^h)^T\nabla u^h\big)g^{-1/2}(\cdot, 0)
-\mbox{Id}_d\big\|_0 \\ & = \big\|g^{-1/2}\big((\nabla
u^h)^T\nabla u^h- g\big)g^{-1/2}(\cdot, 0) \big\|_0\\ & \leq
C\big(\|(\nabla u_\epsilon)^T\nabla u_\epsilon - g(\cdot, 0)\|_0 +
\|g(\cdot,0) - g(\cdot,0)\ast \phi_\epsilon\|_0\big) \leq C\epsilon^{2\alpha}
\end{split}
\end{equation}
so consequently $(\nabla u^h) g^{-1/2}(x, 0)$ for all
$x\in\bar\omega$, remains in a small
neighborhood of $SO(d+k)$. In that (sufficiently small)
neighborhood, we may then use the assumed properties of
the energy density $W$. We also observe that the same statement holds for $(\nabla u^h) g^{-1/2}$ on
the entire domain $\Omega^h$,  provided that in the expression $h\epsilon^{\alpha-1} = h^{1+
  t(\alpha-1)}$ the exponent is positive, namely:
\begin{equation}\label{requi}
1+t(\alpha-1) > 0.
\end{equation}
This is because (\ref{badabada}) implies:
\begin{equation}\label{badu3}
\begin{split}
& |(\nabla u^h)g^{-1/2}(x, z) - (\nabla u^h)g^{-1/2}(x, 0)| \\ & \leq C h(\|\nabla
B_\epsilon\|_0+1)\leq C h \epsilon^{\alpha-1} \quad\mbox{ for all } \; (x,z)\in\Omega^h,
\end{split}
\end{equation}
We are now ready to estimate, by the frame invariance of $W$ and by (\ref{badu2}), \ref{badu3}):
\begin{equation*}%\label{Eh}
\begin{split}  
& \mathcal{E}_g^h(u^h)  \leq C \int_{\Omega^1} W\big((\nabla
u^h)g^{-1/2}(x,hz)\big) ~\mathrm{d}(x,z) \\ & =
C \int_{\Omega^1} W\big((Q^h)^T(\nabla u^h)g^{-1/2}(x,0) +
\mathcal{O}(h\epsilon^{\alpha-1}) \big) ~\mathrm{d}(x,z) 
\\ & = C \int_{\Omega^1} W\big((\mbox{Id}_d
+\mathcal{O}(\epsilon^{2\alpha}))^{1/2} + \mathcal{O}(h\epsilon^{\alpha-1}) \big) 
= C \int_{\Omega^1} W\big(\mbox{Id}_d
+\mathcal{O}(\epsilon^{2\alpha}) + \mathcal{O}(h\epsilon^{\alpha-1}) \big) 
\\ & \leq C \big(\epsilon^{2\alpha} + h\epsilon^{\alpha-1}\big)^2 =
C \big(h^{2t\alpha} + h^{1+t(\alpha-1)}\big)^2. 
\end{split}
\end{equation*}
Therefore, choosing $t=\frac{1}{\alpha+1}$ so that $2t\alpha=
1+t(\alpha-1)$ which in particular validates the requirement
(\ref{requi}), the above formula yields:
$$\inf \mathcal{E}_g^h\leq \mathcal{E}_g^h(u^h) \leq
Ch^{2t\alpha} = C h^{\frac{4\alpha}{\alpha+1}},  $$
completing the proof of Theorem \ref{th_elasticity}.
\endproof

\section{Appendix A: A proof of Lemma \ref{lem_step2}} \label{sec_appA}

In this section, we prove the key identity (\ref{sisi}) stated in
Lemma \ref{lem_step2}. Observe that the oscillatory perturbation is
introduced in a single chose normal direction $E_u$ and is
accompanied by a matching tangential correction, following the construction
inspired by \cite{Kuiper}. A related formula 
was derived for the Monge-Amp\`ere system
in \cite{lew_conv}, and should be contrasted with the 
constructions in \cite[Lemma 3.1]{CHI2} and \cite{lewpak_MA} which
contain additional terms of the form $\frac{\Gamma}{\lambda }a \nabla
a\otimes\eta$. Such terms would prevent the implementation of the 
stage induction construction
leading to estimates in Theorem \ref{thm_STA}.
Naturally, the error terms in
(\ref{sisi}) are now considerably more intricate, additionally featuring the five
orders of residual terms $\mathcal{R}_0\ldots \mathcal{R}_4$.
 
\bigskip

\noindent {\bf Proof of Lemma \ref{lem_step2}}

\smallskip

We start by calculating the gradient of the modified immersion:
\begin{equation*}
\begin{split}
\nabla \tilde u = & \;\nabla u + \Gamma'(\lambda t) a E_u\otimes \eta
+ \bar\Gamma'(\lambda t)a^2 T_u \eta\otimes\eta 
\\ & + \frac{\Gamma(\lambda t)}{\lambda}\nabla(aE_u) 
+ \frac{\bar\Gamma(\lambda t)}{\lambda}\nabla(a^2 T_u\eta)
+ \frac{\dbar\Gamma'(\lambda t)}{\lambda}aT_u\nabla a\otimes \eta
+ \frac{\dbar\Gamma(\lambda t)}{\lambda^2}\nabla \big(aT_u\nabla a).
\end{split}
\end{equation*}
Taking into account that $(\nabla u)^TT_u = \Id_d$ we obtain the
following formula, where we suppress the argument 
$\lambda t$ in $\Gamma$ and $\bar\Gamma$, and order the terms
according to powers of $\lambda$ in the denominators:
\begin{equation}\label{fair}
\begin{split}
& (\nabla \tilde u)^T\nabla \tilde u - (\nabla u)^T\nabla u \\ & = 
\Big\{2 \bar\Gamma' a^2\eta\otimes\eta + (\Gamma')^2a^2\eta\otimes\eta
+ (\bar\Gamma')^2a^4|T_u\eta|^2\eta\otimes\eta\Big\}
\\ & \quad + \Big\{\frac{2\Gamma}{\lambda} \sym\big((\nabla u)^T\nabla (aE_u)\big)
+ \frac{2\bar\Gamma}{\lambda} \sym\big((\nabla u)^T\nabla (a^2 T_u \eta)\big)
+ \frac{2\dbar\Gamma'}{\lambda} a \;\sym\big(\nabla a \otimes \eta\big)
\\ & \qquad \; \; + \frac{2\Gamma\Gamma'}{\lambda} a\;\sym\big(\nabla
(aE_u)^T(E_u\otimes\eta)\big) 
+ \frac{2\bar\Gamma\Gamma'}{\lambda}
a\;\sym\big(\nabla (a^2 T_u \eta)^T (E_u\otimes \eta)\big)  
\\ & \qquad \; \;  +\frac{2\Gamma\bar\Gamma'}{\lambda}
a^2\;\sym\big(\nabla(aE_u)^T T_u(\eta\otimes\eta) \big)
+ \frac{2\bar\Gamma\bar\Gamma'}{\lambda}
a^2\;\sym\big(\nabla( a^2T_u\eta)^TT_u(\eta\otimes\eta)\big) 
\\ & \qquad \; \;  + \frac{2\bar\Gamma'\dbar\Gamma'}{\lambda}
a^3\;\big\langle T_u\eta, T_u\nabla a\big\rangle \eta\otimes\eta\Big\}
\quad \mbox{\Large \bf +}
\end{split}
\end{equation}

\begin{equation}\label{fair}
\begin{split}
& \mbox{\Large \bf +}\quad 
\Big\{\frac{2\dbar\Gamma}{\lambda^2}\sym\big((\nabla u)^T\nabla(aT_u\nabla a)\big)
+\frac{2\dbar\Gamma\Gamma'}{\lambda^2}a\;\sym\big(\nabla(aT_u\nabla
a)^T(E_u\otimes \eta) \big) 
\\ & \qquad \;\; + \frac{2\bar\Gamma'\dbar\Gamma}{\lambda^2}a^2\;
\sym\big(\nabla(aT_u\nabla a)^TT_u(\eta\otimes\eta) \big)
+\frac{\Gamma^2}{\lambda^2} \nabla(a E_u)^T\nabla (aE_u)
\\ & \qquad \;\; +
\frac{2\Gamma\bar\Gamma}{\lambda^2}\sym\big(\nabla(aE_u)^T
\nabla(a^2T_u\eta) \big) + \frac{2\Gamma\dbar\Gamma'}{\lambda^2}a\;\sym\big(\nabla(aE_u)^T
T_u(\nabla a\otimes \eta) \big)
\\ & \qquad \;\; + \frac{\bar\Gamma^2}{\lambda^2}\nabla(a^2T_u\eta)^T \nabla(a^2T_u\eta)
+\frac{2\bar\Gamma\dbar\Gamma'}{\lambda^2}a\;\sym\big(\nabla(a^2T_u\eta)^T
T_u(\nabla a\otimes \eta) \big) 
\\ & \qquad \;\; + \frac{(\dbar\Gamma')^2}{\lambda^2}a^2\; |T_u\nabla a|^2 \eta\otimes\eta \Big\}
\\ & \quad +\Big\{\frac{2\bar\Gamma\dbar\Gamma}{\lambda^3}\sym\big(\nabla(a^2T_u\eta)^T
\nabla(aT_u\nabla a) \big)
+ \frac{2\Gamma\dbar\Gamma}{\lambda^3}\sym\big(\nabla(a E_u)^T
\nabla(aT_u\nabla a) \big)
\\ & \qquad \;\; +\frac{2\dbar\Gamma\dbar\Gamma'}{\lambda^3}a\;\sym\big(\nabla(aT_u\nabla
a)^T T_u(\nabla a\otimes \eta) \big)\Big\} 
\\ & \quad + \frac{\dbar\Gamma^2}{\lambda^4} \nabla(aT_u\nabla a)^T \nabla(aT_u\nabla a). 
\end{split}
\end{equation}
We now rewrite the first, second and fourth terms in the second parentheses, in the form:
\begin{equation*}
\begin{split}
& \frac{2\Gamma}{\lambda} \sym\big((\nabla u)^T\nabla (aE_u)\big) =
\frac{2\Gamma}{\lambda} a\; \sym\big((\nabla u)^T\nabla E_u\big)
= - \frac{2\Gamma}{\lambda} a\; \big[ \langle
\partial^2_{ij}u,E_u\rangle\big]_{i,j=1\ldots d},
\\ & \frac{2\bar\Gamma}{\lambda} \sym\big((\nabla u)^T\nabla (a^2 T_u \eta)\big)
= \frac{2\bar\Gamma}{\lambda} \Big(\sym\big(\nabla (a^2)\otimes \eta\big)
+ a^2\; \sym\big((\nabla u)^T\nabla(T_u\eta)\big)\Big)
\\ & \qquad = \frac{4\bar\Gamma}{\lambda} a\;\sym\big(\nabla a\otimes \eta\big)
- \frac{2\bar\Gamma}{\lambda} a^2\; \big[ \langle
\partial^2_{ij}u,T_u\eta\rangle\big]_{i,j=1\ldots d}, 
\\ & \frac{2\Gamma\Gamma'}{\lambda} a\;\sym\big(\nabla (aE_u)^T(E_u\otimes\eta)\big)
= \frac{2\Gamma\Gamma'}{\lambda} a\;\sym\big(\nabla a\otimes\eta\big),
\end{split}
\end{equation*}
where we have used that $(\nabla u)^TE_u=0$, $(\nabla u)^T T_u =
\Id_d$ and $(\nabla E_u)^TE_u=0$.
Similarly, we rewrite the first and the fourth term in the third
parentheses of (\ref{fair}), as follows:
\begin{equation*}
\begin{split}
& \frac{2\dbar\Gamma}{\lambda^2}\sym\big((\nabla u)^T\nabla(aT_u\nabla a)\big)
=\frac{2\dbar\Gamma}{\lambda^2}\Big(\nabla a\otimes\nabla a +
a\nabla^2a -a\;\big[ \langle \partial^2_{ij}u,T_u\nabla a \rangle\big]_{i,j=1\ldots d}\Big),
\\ & \frac{\Gamma^2}{\lambda^2} \nabla(a E_u)^T\nabla (aE_u)= 
\frac{\Gamma^2}{\lambda^2} \Big(a^2\;(\nabla E_u)^T\nabla E_u + \nabla a\otimes\nabla a\Big).
\end{split}
\end{equation*}
In conclusion and recalling the error term $\mathcal{R}$, the formula (\ref{fair}) becomes:
\begin{equation*}
\begin{split}
& (\nabla \tilde u)^T\nabla \tilde u - (\nabla u)^T\nabla u \\ &
\qquad = \big(2 \bar\Gamma' +(\Gamma')^2\big) a^2\eta\otimes\eta
- \frac{2\Gamma}{\lambda} a\; \big[ \langle \partial^2_{ij}u,E_u\rangle\big]_{i,j=1\ldots d}
+ \frac{4\bar\Gamma +2\Gamma\Gamma'+2\dbar\Gamma'}{\lambda}
a\;\sym\big(\nabla a\otimes \eta\big)
\\ & \qquad\quad + \frac{2\dbar\Gamma + \Gamma^2}{\lambda^2}\nabla a\otimes\nabla a 
+ \frac{2\dbar\Gamma}{\lambda^2} a\nabla^2a + \mathcal{R},
\end{split}
\end{equation*}
whereas we conclude (\ref{sisi}) in view of the identities:
$$ 2\bar\Gamma' + (\Gamma')^2=1, \qquad
4\bar\Gamma + 2\Gamma\Gamma'  + 2\dbar\Gamma' =0, \qquad
2\dbar\Gamma + \Gamma^2 = \tbar\Gamma.$$
The proof is done. \endproof

\section{Appendix B: proofs of propagation and existence of normal frame Lemmas
  \ref{lem_det}, \ref{lem_Tu}, \ref{lem_propa} and Corollary
  \ref{lem_normals}}\label{sec_appB} 

In this section, we collect the proofs of the lemmas on the propagation of normal
vectors stated in the preparatory section \ref{sec_step}. The
arguments follow the outline of \cite[Section 3]{lew_full2d2k} now lifted
to the general setting of arbitrary $d$ and $k\geq 2$.

\bigskip

\noindent {\bf Proof of Lemma \ref{lem_det}}

The first assertion holds as $|\partial_iu(x)|^2=\langle \nabla
u(x)^T(\nabla u(x)) e_i, e_i\rangle \in [1/\gamma, \gamma]$ for $i=1\ldots d$. For the
second assertion, note that all eigenvalues 
of the symmetric matrix $(\nabla u)^T\nabla u$ lie within the
interval $[1/\gamma, \gamma]$ for all $x\in\bar\omega$, by (\ref{immers_gamma}). 
This implies the stated determinant bound.
\endproof

\bigskip

\noindent {\bf Proof of Lemma \ref{lem_Tu}}

Writing $T_u=\frac{1}{\det((\nabla u)^T\nabla u)}(\nabla
u)\,\mbox{cof}\big((\nabla u)^T\nabla u\big)$, we easily conclude that
Lemma \ref{lem_det} implies the first assertion in (i), with $C$ depending on
$\gamma, d$. For the second assertion, we use Fa\'a di Bruno's formula
to the composition $T_u=f\circ (\nabla u)$, where the map
$f(\xi) = \frac{1}{\det(\xi^T\xi)}\xi \mbox{cof}\big(\xi^T\xi\big)$
is smooth and has all its derivatives bounded on the
domain containing the values of $\nabla u$, because of Lemma \ref{lem_det}.

\medskip

\noindent To validate the bound in (ii), we use (i) and the assumption to get:
$$\|\nabla^{(m)}T_u\|_0\leq C
\hspace{-4mm} \sum_{p_1+2p_2+\ldots mp_m=m} \; \prod_{i=1}^m\big(\bar
C \mu^i A\big)^{p_i} \leq C {\bar C}^m\mu^m A,$$
as at least one of the exponents $p_i$ in each product under the sum
above must be positive. This completes the proof.
\endproof

\bigskip

\noindent {\bf Proof of Lemma \ref{lem_propa}}

{\bf 1.} Observe that, by the first bound in Lemma \ref{lem_det}, we get:
$$ \|(\nabla v)^T\nabla v - (\nabla u)^T\nabla u\|_0\leq \|\nabla
v-\nabla u\|_0(\|\nabla u\|_0+\|\nabla v\|_0)\leq \rho_\gamma (2(d\gamma)^{1/2}+1),$$
which implies for $\rho_\gamma$ small (in function of $d,\gamma$), that:
\begin{equation}\label{a}
\frac{1}{2\gamma}\Id_{d}\leq (\nabla v)^T\nabla v\leq 2\gamma \Id_d
\quad\mbox{in } \bar\omega.
\end{equation}
In particular, $T_v$ is well defined, and since $(\nabla v)^TT_v=\Id_d$, we obtain:
\begin{equation*}
(\nabla v)^T\nu_v^i = \big((\nabla v)^T - (\nabla v-\nabla
u)^T\big)E^i_u=(\nabla u)^TE^i_u=0 \quad\mbox{ for } i=1\ldots k.
\end{equation*}
Further, applying Lemma \ref{lem_Tu} to bound $\|T_v\|_0$ by a
constant that only depends on $\gamma,d$, we get:
\begin{equation*}
\begin{split}
& \||\nu_v^i|- 1\|_0\leq \|\nu_v^i- E^i_u\|_0\leq  \|T_v\|_0\|\nabla
v-\nabla u\|_0\leq C\rho_\gamma 
\\ & \|\langle \nu_v^i, \nu_v^j\rangle-\delta_{ij}\|_0\leq
2\|T_v\|_0\|\nabla v-\nabla u\|_0 + \|T_v\|_0^2\|\nabla v-\nabla
u\|_0^2\leq C\rho_\gamma \quad \mbox{ for } i,j=1\ldots k.
\end{split}
\end{equation*}
If $\rho_\gamma$ is sufficiently small, it is clear that the fields
$\{\nu_v^i\}_{i=1}^k$ can be Gramm-Schmidt orthonormalized to the
normal frame $\{E_v^i\}_{i=1}^k$ given by the formulas in (i).

\smallskip

{\bf 2.} To prove assertion (ii), we first estimate, for all $m=0\ldots n$, $i=1\ldots k$:
\begin{equation}\label{b}
\begin{split}
& \|\nabla^{(m)}(\nu^i_v-E^i_u)\|_0 \leq
C\hspace{-2mm}\sum_{p+q+t=m}\hspace{-2mm}
\|\nabla^{(p)}(\nabla u-\nabla v)\|_0 \|\nabla^{(q)}T_v\|_0 \|\nabla^{(t)}E^i_u\|_0.
\end{split}
\end{equation}
The proof will proceed by induction on $i=1\ldots k$.
At the induction base $i=1$, we write:
\begin{equation*}
\begin{split}
& E^1_v-E^1_u = f(\nu^1_v) - f(E^1_u) = \Big(\int_0^1\nabla
f(t\nu^1_v+(1-t)E^1_u)~\mbox{d}t\Big) (\nu^1_v-E^1_u), 
\\ & \mbox{where } f(z) = \frac{z}{|z|}. 
\end{split}
\end{equation*}
For each $t\in (0,1)$, we bound $\|\nabla^{(m)}_x\big((\nabla f)\circ (E^1_u
+t(\nu_v^1-E^1_u))\big)\|_0$, using Fa\'a di Bruno's formula: 
$$\|\nabla^{(m)}_x \big((\nabla f)\circ (E^1_u
+t(\nu_v^1-E^1_u))\big)\|_0\leq C \hspace{-4mm}\sum_{p_1+2p_2+\ldots mp_m=m} \;
\prod_{s=1}^m\|\nabla^{(s)}(E_u^1+ t(\nu_v^1-E^1_u))\|_0^{p_s},$$
valid for all $m=0\ldots n$. Consequently, we get, in virtue of (\ref{b}):
\begin{equation*}
\begin{split}
&\|\nabla^{(m)}(E^1_v-E^1_u)\|\leq C \hspace{-6mm}\sum_{p+\sum_{s=1}^m sp_s=m}
\hspace{-4mm}\|\nabla^{(p)}(\nu_v^1-E^1_u)\|_0 \prod_{i=1}^m\Big(\|\nabla^{(s)}E_u^1\|_0^{p_s}
+ \|\nabla^{(s)}(\nu_v^1-E^1_u)\|_0^{p_s}\Big)
\\ & \leq C \hspace{-4mm}\sum_{p+\sum_{s=1}^ms(p_s+\dbar p_s)=m}\hspace{-6mm}
\|\nabla^{(p)}(\nu_v^1-E^1_u)\|_0 \prod_{s=1}^m \|\nabla^{(s)}(\nu_v^1-E^1_u)\|_0^{p_s}
\|\nabla^{(s)}E_u^1\|_0^{\dbar p_s}
\\ & \leq C \hspace{-4mm}
\sum_{p+\sum_{s=1}^m s(p_s+\bar p_s+\dbar p_s)=m\hspace{-13mm}}\hspace{-5mm}
\|\nabla^{(p)}(\nabla v -\nabla u)\|_0 \prod_{s=1}^m
\|\nabla^{(s)}(\nabla v - \nabla u)\|_0^{p_s}
\|\nabla^{(s)}T_v\|_0^{\bar p_s} \|\nabla^{(s)}E_u^1\|_0^{\dbar p_s},
\end{split}
\end{equation*}
which implies the claimed bound by Lemma \ref{lem_Tu} (i) and (\ref{a}).

\smallskip

{\bf 3.} Assume that (ii) holds up to some $i=1\ldots k-1$.
In the induction step, we aim to establish the estimate at $i+1$. We first write:
\begin{equation}\label{goa}
\begin{split}    
\tilde\nu^{i+1}_v-E^{i+1}_u & = (\nu^{i+1}_v-E^{i+1}_u)
-\sum_{j<i+1}\langle \nu^{i+1}_v, E^{j}_v\rangle E^j_v
\\ & = (\nu^{i+1}_v-E^{i+1}_u) -\sum_{j<i+1}\langle E^{i+1}_u, E^{j}_v-E^j_u\rangle E^j_v
\end{split}
\end{equation}
as for every $j\le i$ there holds: $\langle \nu^{i+1}_v, E^{j}_v\rangle = \langle E^{i+1}_u,
E^{j}_v\rangle = \langle E^{i+1}_u, E^{j}_v- E^j_u\rangle.$ The first
term in (\ref{goa}) is estimated in (\ref{b}). For the second term, we
fix $j\leq i$ and use the induction assumption:
\begin{equation*}
\begin{split}
&\|\nabla^{(m)}(\langle E^{i+1}_u,E^j_v-E^j_u\rangle E^j_v)\|_0
\\ & \leq C\hspace{-4mm}\sum_{p+q+t=m}\hspace{-3mm} \|\nabla^{(p)}(E^j_v-E^j_u)\|_0
\big(\|\nabla^{(q)}E^j_u\|+\|\nabla^{(q)}(E^j_v-E^j_u)\|\big)\|\nabla^{(t)}E^{i+1}_u\|_0
\\ & \leq C \hspace{-4mm}
\sum_{p+t+\sum_{s=1}^m s(p_s+\bar p_s+\sum_{j\leq i}\dbar p_{j,s})=m\hspace{-30mm}}\hspace{-5mm}
\|\nabla^{(p)}(\nabla v -\nabla u)\|_0
\|\nabla^{(t)}E_u^{i+1}\|_0\prod_{s=1}^m \|\nabla^{(s)}(\nabla v-\nabla u)\|_0^{p_s}
\|\nabla^{(s)}\nabla v\|_0^{\bar p_s} \prod_{j\leq i}\|\nabla^{(s)}E_u^j\|_0^{\dbar p_{j,s}}
\end{split}
\end{equation*}
for all $m=0\ldots n$. Consequently, in view of (\ref{b}), Lemma \ref{lem_Tu} (i) and
(\ref{a}), we conclude that that
$\|\nabla^{(m)}(\tilde\nu_{v}^{i+1} - E^{i+1}_u)\|_0$ enjoys
exactly the same bound as above. In particular:
$$\|\tilde\nu_{v}^{i+1} - E^{i+1}_u\|_0\leq C\rho_\gamma\leq \frac{1}{2}$$ 
if only $\rho_\gamma$ is sufficiently small. We now write, as we did in the case $i=1$:
\begin{equation*}
E^{i+1}_v-E^{i+1}_u = f(\tilde\nu^{i+1}_v) - f(E^{i+1}_u) = \Big(\int_0^1\nabla
f(t\tilde\nu^{i+1}_v+(1-t)E^{i+1}_u)~\mbox{d}t\Big) (\tilde\nu^{i+1}_v-E^{i+1}_u), 
\end{equation*}
and apply the previous estimate, to get:
\begin{equation*}
\begin{split}
&\|\nabla^{(m)}(E^{i+1}_v-E^{i+1}_u)\| \\ & \leq C 
\sum_{p+\sum_{s=1}^m sp_s=m\hspace{-10mm}}
\|\nabla^{(p)}(\tilde\nu_v^{i+1}-E^{i+1}_u)\|_0 \prod_{s=1}^m
\big(\|\nabla^{(s)}E_u^{i+1}\|_0^{p_s} + \|\nabla^{(s)}(\tilde\nu_v^{i+1}-E^{i+1}_u)\|_0^{p_s} \big)
\\ & \leq C \hspace{-3mm}
\sum_{p+\sum_{s=1}^m s(p_s+\dbar p_{i+1,s})=m\hspace{-13mm}}\hspace{-2mm}
\|\nabla^{(p)}(\tilde\nu_v^{i+1}-E^{i+1}_u)\|_0 \prod_{s=1}^m
\|\nabla^{(s)}(\tilde\nu_v^{i+1}-E^{i+1}_u)\|_0^{p_s} \|\nabla^{(s)}E_u^{i+1}\|_0^{\dbar p_{i+1,s}} 
\\ & \leq C \hspace{-7mm}\sum_{p+\sum_{s=1}^ms(p_s+\bar
  p_s + \sum_{j\leq i+1}\dbar p_{j,s})=m\hspace{-15mm}} \hspace{-7mm} \|\nabla^{(p)}(\nabla v-\nabla u)\|_0
\prod_{s=1}^m \|\nabla^{(s)}(\nabla v - \nabla u)\|_0^{p_s} \|\nabla^{(s)}\nabla v\|_0^{\bar p_s}
\prod_{j\leq i+1}\|\nabla^{(s)}E^j_u\|_0^{\dbar p_{j,s}},
\end{split}
\end{equation*}
which completes the proof of (ii).

\smallskip

{\bf 4.} It remains to prove (iii). There, the assumptions and (ii) yield:
\begin{equation*}
\begin{split}
&\|\nabla^{(m)}(E^i_v- E^i_u)\|\leq C \hspace{-4mm}
\sum_{p+\sum_{s=1}^m s(p_s+\bar p_s+\sum_{j\leq i} \dbar
  p_{j,s})=m\hspace{-5mm}}\hspace{-5mm}
\bar C\mu^p A \; \prod_{s=1}^m (\bar C\mu^s)^{p_s+\bar p_s  +\sum_{j\leq i}\dbar p_{j,s}}
\leq C \bar C^{m+1}\mu^m A,
\end{split}
\end{equation*}
for all $i=1\ldots k$ and $m=0\ldots n$. This ends the proof of Lemma \ref{lem_propa}.
\endproof

\bigskip

\noindent {\bf Proof of Corollary \ref{lem_normals}}

{\bf 1.} Applying the interpolation
inequality, the bounds in Lemma \ref{lem_propa} (ii) become:
\begin{equation}\label{glo3}
\begin{split}    
& \|\nabla^{(m)}(E^i_v-E^i_u)\|_0\\ & \leq C
\hspace{-7mm}\sum_{p+\sum_{s=1}^m s(p_s+\bar
  p_s + \sum_{j=1}^m\dbar p_{j,s})=m\hspace{-18mm}} \hspace{-8mm}
\|\nabla v-\nabla u\|_0^{1-p/m}\|\nabla v-\nabla u\|_m^{p/m}
\prod_{s=1}^m \Big(\|\nabla v - \nabla u\|_0^{1-s/m}\|\nabla v -
\nabla u\|_m^{s/m}\Big)^{p_s} \times
\\ & \hspace{8cm} \times \|\nabla v\|_m^{s\bar p_s/m}\prod_{j=1}^m\|E^j_u\|_m^{s\dbar p_{j,s}/m} 
\\ & \leq C \hspace{-2mm}\sum_{p+q+\sum_{j=1}^m t_j=m \hspace{-8mm}} \hspace{-3mm}
\|\nabla v-\nabla u\|_m^{p/m}\|\nabla v\|_m^{q/m} \prod_{j=1}^m\|E^j_u\|_m^{t_j/m} 
\quad \mbox{ for all } \; m=1\ldots n, \; i=1\ldots k,
\end{split}
\end{equation}
in view of Lemma \ref{lem_det} and since $\rho_\gamma\leq 1$.

\smallskip  

{\bf 2.} It suffices to prove the result on $\omega=B_1$. Consider the family of immersions
$\{u_j\in\mathcal{C}^{n+1}(\bar B_1 \R^{d+k})\}_{j=1}^N$, where $u_0$ is
linear and the remaining $u_j$-s are the rescaled versions of $u$ in:
$$u_0(x) = \nabla u(0)x,\quad u_j(x) =
\frac{N}{j}u\big(\frac{jx}{N}\big) \quad\mbox{ for } j=1\ldots N,
\quad \mbox{ for all } \; x\in \bar B_1.$$
Since $\nabla u_j(x) =\nabla u(jx/N)$, we may choose $N\geq 1$ large enough to have:
$$\|\nabla u_{j+1}-\nabla u_j\|_0\leq \sup\big\{|\nabla u(x) -\nabla
u(y)|; \; |x-y|\leq \frac{1}{N}\big\}\leq \delta_\gamma\quad\mbox{ for
  all }\; j=0\ldots N-1,$$
where $N$ depends only on the modulus of continuity of $\nabla u$ and $\gamma$.
The normal frames $\{E^i_{u_j}\in\mathcal{C}^n(\bar B_1,R^{d+k})\}_{i=1}^k$
are now defined inductively via Lemma \ref{lem_propa}, starting from
the constant frame of  the linear immersion $u_0$. Since
all $u_j$ satisfy (\ref{immers_gamma}), the bound (\ref{glo3}) yields
for all $i=1\ldots k$:
\begin{equation*}
\begin{split}    
& \|\nabla^{(m)}(E^i_{u_{j+1}}-E^i_{u_j})\|_0\\ & \leq C
\hspace{-3mm}\sum_{p+q+\sum_{s=1}^m t_s = m \hspace{-3mm}} \hspace{-3mm}
\|\nabla u_{j+1}-\nabla u_j\|_m^{p/m}\|\nabla u_j\|_m^{q/m} \prod_{s=1}^m\|E^s_{u_j}\|_m^{t_s/m} 
\\ & \leq C \hspace{-3mm}\sum_{p+q+\sum_{s=1}^m t_s = m \hspace{-3mm}} \hspace{-3mm}
\|\nabla u\|_m^{p/m}\|\nabla u\|_m^{q/m} \prod_{s=1}^m\|E^s_{u_j}\|_m^{t_s/m} 
\leq C \|\nabla u\|_m \;\mbox{ for } j=0\ldots N-1, \; m=1\ldots n,
\end{split}
\end{equation*}
where $C$ depends on $\gamma, d, k, m$ and $N$. Finally, since $u_N=u$, we get:
$$\|\nabla^{(m)}E^i_u\|_0 \leq\sum_{j=0}^{N-1}\|\nabla^{(m)}(E^i_{u_{j+1}}-E^i_{u_j})\|_0
\leq CN \|\nabla u\|_m \quad \mbox{ for all }\; m=1\ldots n, \; i=1\ldots n,$$
which ends the proof.
\endproof

\noindent We also observe that if $u$ above is smooth, them the normal
frame is smooth as well.


\begin{thebibliography}{99}

\bibitem{Borisov1958} {\sc Borisov, Y.}, {\em The parallel translation on a
    smooth surface. I-IV.}, Vestnik Leningrad. Univ., {\bf
    13}(4):160-171, and {\bf 13}(19):45--54, and {\bf 14}(1):34--50,
and {\bf 14}(13):20--26, (1958, 1959).

\bibitem{Borisov1965} {\sc Borisov, Y.}, {\em $C^{1,\alpha}$-isometric immersions of Riemannian
spaces}, Doklady Akad. Nauk SSSR {\bf 163}:869--871, (1965).

\bibitem{Borisov2004} {\sc Borisov, Y.}, {\em Irregular
    $C^{1,\beta}$-surfaces with analytic metric}, Sib. Mat. Zh. {\bf
    45}(1):25-–61, (2004).

\bibitem{CHI} {\sc Cao, W., Hirsch, J. and Inauen, D.}, 
{\em $C^{1,1/3-}$ very weak solutions to the two dimensional
  Monge-Amp\`ere equation},  Calc. Var., {\bf 64}:160, (2025).

\bibitem{CHI2} {\sc Cao, W., Hirsch, J. and Inauen, D.}, 
{\em A Nash-Kuiper theorem for isometric immersions beyond Borisov's
  exponent}, {\tt arXiv:2503.13867}.

\bibitem{CHIL} {\sc Cao, W., Hirsch, J., Inauen, D. and Lewicka, M.}, 
{\em Full flexibility of the Monge-Amp\`ere system in codimension $d_*-d+1$}, 
{\tt arXiv:2603.28909} (2026).

\bibitem{CaoIn2024} {\sc Cao W. and Inauen D.}, 
{\em Rigidity and flexibility of isometric extension.}
Comment. Math. Helv., {\bf 99}(1):39--80, (2024).

\bibitem{CS} {\sc Cao, W. and Sz\'ekelyhidi Jr., L.}, {\em Very weak
    solutions to the two-dimensional Monge-Amp\`ere equation}, 
  Sci. China Math., {\bf 62}(6): 1041--1056, (2019).

\bibitem{CaoSz2025}  {\sc Cao, W.  and Sz\'{e}kelyhidi, Jr., L.},
{\em On the isometric version of Whitney's strong embedding theorem},
Adv. Math., {\bf 460}:110040, (2025).

\bibitem {CaoSze2022} {\sc Cao, W. and Sz\'{e}kelyhidi, Jr., L.},
{\em Global {N}ash-{K}uiper theorem for compact manifolds},
J. Differential Geom. {\bf 122}(1):35--68, (2022).

\bibitem{CV} {\sc Cohn-Vossen, S.},  {\em Zwei S\"atze \"uber die
    Starrheit der Eifl\"achen}, Nachrichten G\"ottingen, pp. 125--137, (1927).

\bibitem{CDS} {\sc Conti, S., De Lellis, C. and Sz\'ekelyhidi Jr.,
    L. }, {\em h-principle and rigidity for $C^{1,\alpha}$ isometric embeddings},
Proceedings of the Abel Symposium, in: Nonlinear Partial Differential
Equations, pp. 83--116, (2010).

\bibitem{Del_masterpieces} {\sc De Lellis, C.}, {\em The masterpieces of
    John Forbes Nash Jr}, in: H. Holden and R. Piene (editors), The
  Abel Prize 2013–2017, Springer Verlag (2017).

\bibitem{DI2020} {\sc De Lellis C. and Inauen D.},
 {\em $C^{1,\alpha}$ isometric embeddings of polar caps.}
 Adv. Math., {\bf 363}:106996, (2020).

\bibitem{DIS1/5} {\sc De Lellis, C., Inauen, D. and Sz\'ekelyhidi Jr.,
    L.}, {\em A Nash-Kuiper theorem for $C^{1,\frac{1}{5}-\delta}$ immersions of
surfaces in $3$ dimensions}, Revista Matematica Iberoamericana, {\bf
34}(3):1119--1152, (2018).
 
\bibitem{GromovPdr} {\sc Gromov M.},
{\em Partial differential relations}, 
Volume 9 of ``Ergebnisse der Mathematik und ihrer
Grenzgebiete'' (3) [Results in Mathematics and Related Areas
(3)]. Springer-Verlag, Berlin, (1986).

\bibitem{Guenther} {\sc G\"unther M.},
{\em Isometric embeddings of Riemannian manifolds}. 
In ``Proceedings of the International
Congress of Mathematicians'', Vol. I, II (Kyoto, 1990), pp. 1137–-1143. Math. Soc. Japan, Tokyo, (1991). 

\bibitem{HarNir} {\sc Hartmann, P. and Nirenberg, L.}, {\em On
    spherical image maps whose Jacobians 
do not change sign}, Amer. J. Math. {\bf 81}, pp. 901--920, (1959). 

\bibitem{Her} {\sc Herglotz, G.}, {\em \"Uber die Starrheit der
    Eifl\"achen}, Abh. Math. Semin. Hansische Univ.,
{\bf 15}, pp. 127--129, (1943).

\bibitem{In2026} {\sc Inauen D.},
{\em Flexibility of codimension one $C^{1,\theta}$ isometric immersions}
{\tt arXiv:2603.08382} (2026).

\bibitem{in_lew} {\sc Inauen, D. and Lewicka, M.},
{\em The Monge-Amp\`ere system in dimension two and codimension
  three}, to appear in Revista Matematica Iberoamericana, {\tt arXiv:2501.12474} (2025).
  
\bibitem{in_lew2} {\sc Inauen, D. and Lewicka, M.},
{\em The Monge-Amp\'ere system in dimension two is fully flexible in
  codimension two}, {\tt 2504.03582} (2025).

\bibitem{Isett} {\sc Isett P.}, {\em A proof of Onsager’s conjecture.}
Ann. of Math. (2), {\bf 188}(3):871--963, (2018).

\bibitem{Jaco} {\sc Jacobowitz, H.}, {\em Implicit function theorems and isometric embeddings},
Annals of Mathematics, {\bf 95}(2):191--225, (1972).

\bibitem{Kallen} {\sc K\"all\'en, A.}, {\em Isometric embedding of a
    smooth compact manifold with a metric of 
  low regularity}, Ark. Mat. {\bf 16}(1): 29--50, (1978). 
  
\bibitem{Kuiper} {\sc Kuiper, N.}, {\em On {$C^1$}-isometric imbeddings. {I}, {II}.},
Indag. Math., {\bf 17}:545--556 and 683--689,
Nederl. Akad. Wetensch. Proc. Ser. A {\bf 58}, (1955).

\bibitem{lew_conv} {\sc Lewicka, M.}, 
{\em The Monge-Amp\`ere system: convex integration in arbitrary
  dimension and codimension}, SIAM J. Math. Analysis, {\bf 57}(1): 
601--636, (2025). 

\bibitem{lew_improved} {\sc Lewicka, M.},
{\em The Monge-Amp\`ere system: convex integration with
  improved regularity in dimension two and arbitrary codimension},
Diff. Int. Equations, {\bf 38}(3/4):141--174, (2025). 

\bibitem{lew_improved2} {\sc Lewicka, M.},
{\em The Monge-Amp\`ere system in dimension two: a regularity
  improvement}, J. Funct. Anal., {\bf 289}(8):111064, (2025).

\bibitem{lew_full2d2k} {\sc Lewicka, M.},
{\em Full flexibility of isometric immersions of metrics with low H\"older
  regularity in Poznyak theorem's dimension}, {\tt arXiv:2511.16305}.

\bibitem{lew_book} {\sc Lewicka, M.},
{\em Calculus of Variations on Thin Prestressed Films: Asymptotic
  Methods in Elasticity}, Progress in Nonlinear Differential Equations
and Their Applications, {\bf 101}, Birkh\"auser, (2022).

\bibitem{lewlu} {\sc Lewicka, M. and Lucic, D.},
{\em Dimension reduction for thin films with transversally varying
  prestrian: the oscillatory and the non-oscillatory case},
Communications on Pure and Applied Mathematics, {\bf 73}(9):1880--1932, (2020).

\bibitem{LeOchPa} {\sc Lewicka, M., Ochoa, P. and Pakzad, R.},
{\em Variational models for prestrained plates with Monge-Amp\'ere
  constraint}, Diff. Integral Equations, {\bf 28}(9-10):861--898, (2015).

\bibitem{lewpak_MA} {\sc Lewicka, M.  and Pakzad, M.},
{\em Convex integration for the Monge-Amp\`ere equation in two
  dimensions}, Analysis and PDE, {\bf 10}(3):695--727, (2017).

\bibitem{Nash1} {\sc Nash, J.}, {\em The imbedding problem for Riemannian
    manifolds}, Ann. Math., {\bf 63}:20--63, (1956).

\bibitem{Nash2} {\sc Nash, J.}, {\em $\mathcal{C}^1$ isometric
    imbeddings}, Ann. Math., {\bf 60}:383--396, (1954).

\bibitem{Pogo1} {\sc Pogorelov, A.}, {\em Surfaces with bounded extrinsic curvature} (Russian),
Kharhov, (1956).

\bibitem{Pogo2} {\sc Pogorelov, A.}, {\em Extrinsic geometry of convex surfaces}, Translation of
Mathematical Monographs, {\bf 35}, American Math. Soc., (1973).
  
\bibitem{Poz} {\sc Poznjak, \`E.\~G.},
{\em Isometric imbedding of two-dimensional {R}iemannian metrics in
  {E}uclidean spaces},  
Uspehi Mat. Nauk, {\bf 28}(4):47--76, (1973).

\bibitem{Weyl} {\sc Weyl, W.}, {\em \"Uber die Bestimmung einer
    geschlossenen konvexen 
Fl\"ache durch ihr Linienelement}, Vierteljahrsschrift der
naturforschenden Gesellschaft, Zurich, {\bf 61}, pp. 40-72, (1916).

\end{thebibliography}
\end{document}